\documentclass[11pt]{article}
\usepackage{amsmath,amsfonts,amssymb,graphicx,amsthm,color,natbib,dsfont,array,booktabs,multirow,authblk}
\usepackage[colorlinks=true, linkcolor=black, citecolor=black, urlcolor=black]{hyperref}
\usepackage[normalem]{ulem}
\usepackage{geometry}

\allowdisplaybreaks

\newcommand{\E}{\operatorname{E}}
\newcommand{\cov}{\operatorname{Cov}}
\newcommand{\var}{\operatorname{Var}}

\newtheorem{theorem}{Theorem}[]
\newtheorem{lemma}[]{Lemma}
\newtheorem{proposition}[]{Proposition}
\newtheorem{corollary}[]{Corollary}
\newtheorem{definition}[]{Definition}

\theoremstyle{definition}
\newtheorem{remark}[]{Remark}

\newtheorem{example}[]{Example}

\begin{document}

\author[1]{{Fadoua} {Balabdaoui}}
\author[1,2]{{Antonio} {Di Noia}}
\affil[1]{Seminar for Statistics, Department of Mathematics, ETH Zurich}
\affil[2]{Faculty of Economics, Euler Institute, Università della Svizzera italiana}
\renewcommand\Affilfont{\itshape\small}

\title{Asymptotic theory for nonparametric testing of $k$-monotonicity in discrete distributions}
\date{\today}
\maketitle
\begin{abstract}
In shape-constrained nonparametric inference, it is often necessary to perform preliminary tests to verify whether a probability mass function (p.m.f.)\ satisfies 
qualitative constraints such as monotonicity, convexity, or in general $k$-monotonicity. 
In this paper, we are interested in nonparametric testing of $k$-monotonicity of a finitely supported discrete distribution.
We consider a unified testing framework based on a natural statistic which is directly derived from the very definition of $k$-monotonicity.
The introduced framework allows us to design a new consistent method to select the unknown knot points that are required to consistently approximate the limit distribution of several test statistics based either on the empirical measure or the shape-constrained estimators of the p.m.f.
We show that the resulting tests are asymptotically valid and consistent for any fixed alternative. 
Additionally, for the test based solely on the empirical measure, we study the asymptotic power under contiguous alternatives and derive a quantitative separation result that provides sufficient conditions to achieve a given power.  
We employ this test to design an estimator for the largest parameter $k  \in \mathbb N_0$ such that the p.m.f.\ is $j$-monotone for all $j = 0, \ldots, k$, and show that the estimator is different from the true parameter with probability which is asymptotically smaller than the nominal level of the test. Finally, we conduct an extensive simulation study to validate the theory, assess the finite-sample performance of the proposed methods, and illustrate them on several real datasets.
\end{abstract}

\noindent {\bf Keywords:} $k$-monotonicity, asymptotic tests, contiguity, discrete distributions, monotonicity, convexity, probability mass function, nonparametric estimation and testing, shape-constrained estimation.

\section{Introduction}
\label{sec:intro}
\subsection{Background}
Modelling count data is an important task in many statistical problems. Several parametric families of discrete distributions have been proposed in the literature with their associated inference and goodness-of-fit procedures. For real-world applications, there is often a great interest in modelling the distribution of count data by assuming that the underlying probability mass function (p.m.f.)\ satisfies some shape constraint such as monotonicity, convexity, log-concavity, unimodality, $k$-monotonicity including complete monotonicity ($k = \infty$); see e.g.\  \cite{balabdaoui2013}, \cite{DUROT2013282}, \cite{chee2016}, \cite{balabdaoui2016}, \cite{balabdaoui2020completely}.
Among those constraints, $k$-monotonicity has received attention because of its generality, as it comprises monotonicity ($k=1$) and convexity ($k=2$), and its wide applicability in many real-world problems. In ecology, for instance, $k$-monotonicity naturally arises in the problem of species richness, where the goal is to estimate the total number of species in a population (based on the observed ones). This is done by assuming that the distribution of species abundances is $k$-monotone, as it allows for identifying and estimating the total number of species. We refer to the work of   \cite{durot2015nonparametric} where the convex model was used, \cite{giguelay2017estimation}, \cite{chee2016} and \cite{balabdaoui2020completely} for the $k$-monotone and $\infty$-monotone models.

Several papers have developed pointwise and global asymptotic theory for nonparametric estimators of discrete distributions under such shape constraints, see e.g.\  \cite{balabdaoui2013}, \cite{balabdaoui2016}, \cite{balabdaoui2017}, \cite{jankowski2009}, \cite{giguelay2017} and \cite{balabdaoui2020completely}. 
However, it appears that less attention has been paid to proposing testing procedures for these qualitative constraints. This oversight often stems from the complexity involved in deriving distributional results for the test statistics.

The current work is motivated by the attempts to construct tests for convexity and more generally $k$-monotonicity already made in   \cite{BALABDAOUI20188} and \cite{GIGUELAY201896}. Our main goal is to propose a testing procedure that is less conservative than the one proposed in \cite{GIGUELAY201896}, straightforward to implement, easy to explain to a practitioner, and for which theoretical guarantees are established.  

\subsection{Contributions}
In this paper, we propose a unified framework for testing the $k$-monotonicity of a discrete distribution based on a random sample. As will become clear in the next sections, our testing procedure is based on the derivation of the asymptotic distribution of a very simple statistic. This statistic, whose sign determines whether $k$-monotonicity of the underlying distribution holds or not, is similar to the one used in  \cite{GIGUELAY201896}. Despite this similarity,  we take a very different route, which yields a less conservative and more powerful test.  

It is worth noting that we shall restrict attention to discrete distributions that are supported on a finite subset of integer numbers, however, we do not assume that the support is known.

Such a requirement is rather standard in the related literature as it is not very restrictive. In fact, in this paper and all the references mentioned above, no upper bound is imposed on the cardinality of the support. Additionally, the same requirement allows us to derive asymptotic theoretical guarantees which do not depend on the underlying distribution; i.e., which are distribution-free.

The contributions of this paper are as follows:

\begin{enumerate}

\item  We characterise the limit distribution of the test statistic, after centring and rescaling, in all settings and not only under $k$-monotonicity. Thus, our approach to the testing problem is much more general than the one adopted in \cite{GIGUELAY201896}. In that paper, the authors use stochastic ordering under $k$-monotonicity to determine the rejection region for their test. Resorting to this stochastic ordering is the very reason for which the test proposed in \cite{GIGUELAY201896} is conservative. For more details, we refer the reader to Section 3.5.   
 
\item  We design a novel data-driven approach to select the knot points of the underlying p.m.f.\ (see Definition \ref{def:k-mon}). Since the limit distribution of the statistic depends on these special points, our selection procedure is crucial for constructing asymptotically valid tests. One of our main results is to show that, under $k$-monotonicity, the probability that the set of selected points and the set of the true knots do not agree is of order $1/\sqrt n$, where $n$ is the sample size.

\item We show that our test is asymptotically valid and it is consistent under any fixed alternative, and, by construction, it is less conservative than the one proposed by \cite{GIGUELAY201896}. Achieving the latter is possible since the selection procedure allows us to consistently estimate the distribution of the test statistic under the null hypothesis.

\item We derive quantitative separation results which give a lower bound on $n$ as well as the extent of non-$k$-monotonicity of the true p.m.f.\ so that our test has power at least $1-\beta$ for any given $\beta \in (0,1)$. Sufficient conditions can be derived from the first ones in such a way that they only involve $k$, the cardinality of the support, $\beta$, and the targeted level $\alpha$.  Additionally, we study the behaviour of our test under contiguity and show that it is asymptotically unbiased.

\item We construct alternative tests for monotonicity ($k=1$) and convexity ($k=2$) constraints, which are based on known shape-constrained estimators (see \citep{jankowski2009}, and \citep{DUROT2013282}), and also on our knot selection procedure. We show that these further tests are asymptotically valid and consistent.

\item We propose a method to estimate the \lq\lq strongest\rq\rq \  $k$-monotonicity parameter $k_0$ defined as the largest $k\in \mathbb{N}_0$ such that the true p.m.f.\ is $j$-monotone for all $j = 0, \ldots, k$.  We show that the probability that the obtained estimator agrees with $k_0$ is asymptotically at least $1-\alpha$.  

\end{enumerate}

We would like to emphasise here that one difference between our paper and that of \cite{GIGUELAY201896} is the definition we adopt for $k$-monotonicity. In fact, while the inequality constraints in \cite{giguelay2017} and \cite{GIGUELAY201896} are imposed on all non-negative integers (even in the case where the true p.m.f.\ is finitely supported), we only assume here that the constraints hold for 
$j \in \{m, \ldots, M - k \}$, where $m$ and $M$ are respectively the minimum and maximum of the support of the true p.m.f.  For this reason, $k$-monotonicity in the sense used in \cite{giguelay2017} and \cite{GIGUELAY201896} implies the one we consider here.   Note that they are equivalent in the case of monotonicity; i.e., $k=1$, because any monotone p.m.f.\ takes value $0$ on the right of its support and hence continues to be non-increasing. An example that we construct below in Section \ref{sec:asymp} illustrates the existing difference between our definition of convexity and the one considered in \cite{giguelay2017}.  We will return to this aspect after Definition \ref{def:k-mon} below.

\subsection{Organisation of the paper}
The present paper is organised as follows: In Section \ref{sec:asymp} we introduce $k$-monotonicity sample statistics and investigate their asymptotic behaviour. In Section \ref{sec:tests} we construct a test for $k$-monotonicity and discuss its calibration, which is ensured by employing a general data-driven procedure which allows us to consistently select the unknown knots of the true p.m.f.\ In the same section, we present an estimation method to select the largest $k$-monotonicity parameter and show that the probability that the resulting estimator and the true parameter are different is smaller than the nominal level of the test.  Section \ref{sec:mono-conv} is devoted to the important constraints of monotonicity and convexity. Besides the tests constructed in Section \ref{sec:asymp}, we present a second testing approach based on the $\ell_2$-distance between the raw empirical estimator and a shape-constrained version thereof. In Section \ref{sec:num} we show the results of an extensive Monte Carlo simulation study that we conducted to validate the theoretical results and assess the finite sample performance of the proposed tests by comparing them with other testing procedures. To highlight the practical use of the proposed methods, we implemented them on several real datasets. Section \ref{sec:proofs} presents the proofs of the theoretical results. In the Appendix, we provide some additional experiments, derivations, and details regarding why bootstrap resampling cannot be used to approximate the limiting distribution of the test statistic based on the empirical measure.

\section{Asymptotics for \texorpdfstring{$k$}{k}-monotonicity sample statistics}
\label{sec:asymp}
In the sequel, we consider a probability space $(\Omega, \mathcal{F},P)$ on which a random variable (r.v.) $X:\Omega \to \mathcal{S}\subset \mathbb{Z}$ is defined such that $|\mathcal{S} | < \infty$; i.e., $X$ is supported on a finite subset of the integers. Let $p$ denote the probability mass function (p.m.f.) associated with the distribution of $X$.  We start this section by recalling the definition of $k$-monotonicity. 

\begin{definition}
\label{def:k-mon}
        Let $k\in \mathbb{N}_0 $ and $\mathcal{S}=\{m,\dots,M\}$ for some (not necessarily known) integers  $m$ and $M$ such that $m \le M - k$. The p.m.f.\ $p$ associated with the distribution of a r.v.\ $X$ supported on $\mathcal{S}$ is said to be $k$-monotone if
        \begin{align}\label{eq:kmonIneq}
            \nabla^k p(j)=(-1)^k\Delta^kp(j)\geq 0
        \end{align}
        for all $j \in \mathcal{S}_k := \{m,\dots,M-k\} $, where $\Delta^k$ is the $k$-th degree discrete Laplacian defined recursively as
\begin{align*}
\Delta^0 p(j) &= p(j), \\
\Delta^1 p(j) &= p(j+1) - p(j), \\
\Delta^r p(j) &= \Delta^1[\Delta^{r-1} p(j)] , \quad \text{for } r \ge 2.
\end{align*}
Moreover, if $\nabla^kp(j)>0$, then $j$ is called a $k$-knot point or simply a knot of $p$.
\end{definition}
Note that $\nabla^0$ is the identity operator, thus, $0$-monotonicity simply means non-negativity of $p$, a property which is always satisfied. For $k \in \mathbb{N}$,  $\nabla^k$ can be seen as a discrete forward difference operator that captures certain geometric properties of a given sequence of numbers. For instance, $\nabla^1$ returns the right-hand slopes and $\nabla^2$ the right-hand curvatures.
As mentioned in Section \ref{sec:intro}, our definition of $k$-monotonicity is different from the one used in \cite{giguelay2017} and \cite{GIGUELAY201896}. The main difference is that we restrict attention to the support of the underlying distribution and hence require the constraint in \eqref{eq:kmonIneq} to be satisfied only for the integers in $\mathcal{S}_k$. \cite{giguelay2017} uses the same definition as in \cite{lefevre2013multiply} where the inequalities in \eqref{eq:kmonIneq} have to be satisfied for all $j \in \mathbb N_0$. This means that these inequalities have to hold beyond the set $\mathcal{S}_k$.  In this sense, the definition used in \cite{giguelay2017} and \cite{GIGUELAY201896} is more restrictive. On the other hand, some interesting properties can be shown in that case, e.g.\ $k$-monotonicity implies strict $l$-monotonicity for $1\leq l\leq k-1$, where the strictness refers to the inequalities in \eqref{eq:kmonIneq};  see \cite{giguelay2017}. In the following example, we again stress the fact that our focus in this work is on finitely supported distributions. For this reason, we are mainly interested in the \lq\lq local\rq\rq \  shape of the corresponding p.m.f.\ on its support and not in the \lq\lq global\rq\rq \ one.
\begin{example}
    To give a concrete example, consider the p.m.f.\ $p$  defined on the set $\{0,1,2,3\}$ as follows: 
    $$p(0)= 1/3,\quad  p(1) = p(2)=1/6, \quad p(3)=1/3,  \quad p(j) =0 \quad \forall \,\, j > 3.$$  Then, with $k=2$ we have that $\mathcal{S}_2 = \{0,1 \}$, $$\nabla^2 p(0) = p(0) + p(2) -2 p(1) =  1/6 > 0,$$ and $$\nabla^2 p(1) = p(1) + p(3) -2 p(2)  = 1/6 > 0.$$  This means that $p$ is convex according to Definition \ref{def:k-mon}.  However, the same p.m.f.\ is not convex in the sense of \cite{giguelay2017}.  This can be seen in a simple sketch of the p.m.f.\, or using a formal calculation. For instance, 
    $$\nabla^2 p(2) =p(2)+ p(4)  -  2p(3) =  1/6 +0- 2/3 = -1/2 < 0.$$  Note that a convex p.m.f.\ in the sense of \cite{giguelay2017} has to be strictly decreasing on $\mathbb N_0$, a property which is clearly violated in this example.
\end{example}
From Definition \ref{def:k-mon} it follows that $k$-monotonicity of a p.m.f. $p$ is equivalent to the condition
\begin{align*}
    \rho_k:=\min_{j \in \mathcal{S}_k}\nabla^k p(j) \geq 0.
\end{align*}
To describe our statistical procedure to test whether a sample is drawn from a $k$-monotone $p$, let us consider $X_1,\dots,X_n$ independent copies of $X$, for which we adopt the compact notation $X_{1:n}$ and $x_{1:n}$ for its realisation.  Define the empirical p.m.f.\ as 
$$\widehat{p}_n(j) = \frac{1}{n}\sum_{i=1}^n \mathbf{1}_{\{X_i=j\}}.$$
In the sequel, we shall focus on studying the properties of the estimator 
\begin{align*}
    \widehat {\rho}_{k,n}= \min_{j \in \widehat{\mathcal  {S}}_{k,n}}\nabla^k \widehat p_n(j)
\end{align*}
where $\widehat{\mathcal  {S}}_{k,n} = \{X_{(1)},\dots,X_{(n)}-k\}$ is obtained through the extreme order statistics $X_{(1)}= \min \{X_{1:n}\}$, and $X_{(n)}= \max \{X_{1:n}\}$. Let us consider the following centred and re-scaled version of $\widehat {\rho}_{k,n}$:
$$
T_{k,n}:= \sqrt{n} (\widehat {\rho}_{k,n}- \rho_k).
$$
For our testing purposes, we will later propose to use a version of the statistic $T_{k,n}$ 
as it is straightforward to compute and completely depends on the empirical measure. Our proposal comes with the advantage that the asymptotic behaviour of $T_{k,n}$ can be established. For the next results, let us introduce the set
\begin{align}\label{eq:Ih}
I_{h}^{(k)} = \{ j \in \mathcal S_k:  \nabla^kp(j) = h \}
\end{align}
for a given $h \in \mathbb R$. In the following, we adopt a simplified notation by omitting the superscript, thus, we use $I_{h}$ to implicitly indicate $ I_{h}^{(k)}$.

\begin{theorem}
\label{thm:main}
As $n\to \infty$, 
$$T_{k,n}\overset{d}{\to} W_{I_{\rho_k}}=\min_{j\in I_{\rho_k}} Z_j,$$
where $(Z_j)_{j \in \mathcal S_k}$ is a random vector with law $\mathcal N(0,\Sigma_k)$ such that
\begin{align}
\label{eq:Sigma_k}
(\Sigma_k)_{r,s}= \cov[\nabla^k\mathbf{1}_{\{X_1=m+r-1\}},\nabla^k\mathbf{1}_{\{X_1=m+s-1\}}]
\end{align}
for $r, s \in \{1,\ldots, M-k-m +1 \}$. In particular, if $\rho_k=0$ then $p$ is $k$-monotone and $I_0=\mathcal{S}_k \setminus J_0$ where $J_0$ is the set of knots of $p$; i.e.\ $J_0 =  \{j \in \mathcal{S}_k: \nabla^k p(j) > 0 \}$.
\end{theorem}

Note that Theorem \ref{thm:main} can be proved either using the asymptotic properties of the empirical p.m.f.\ or the generalised Delta-method for quasi-differentiable functions (\citealp{marcheselli2000generalized}). In the Appendix, we provide an explicit expression of the asymptotic covariance $\Sigma_k$ for $k=1,2$.

\section{Asymptotic tests for \texorpdfstring{$k$}{k}-monotonicity}
\label{sec:tests}
\subsection{Test statistic and calibration}
In this section, we present a class of asymptotic tests which asymptotically achieve a given nominal level of significance $\alpha\in(0,1)$ and are consistent for testing $k$-monotonicity for any given integer $k \ge 1$. Now, testing whether a finitely supported p.m.f.\ is $k$-monotone amounts to considering the hypotheses
\begin{align}
\label{eq:hp}
H_0^{(k)}: \rho_k  \ge 0 \ \ \mathrm{against} \ \  H_1^{(k)}: \rho_k < 0.
\end{align}
Let us introduce the test statistic
\begin{align*}
\widehat{T}_{k,n} := \sqrt{n} \widehat \rho_{k,n}.
\end{align*}
Moreover, let $(\widehat Z_{j,n})_{j\in \widehat{  \mathcal{S}}_{k,n}}\sim \mathcal{N}(0,\widehat \Sigma_{k,n})$ where $\widehat \Sigma_{k,n}$ is the empirical estimator of $\Sigma_k$ obtained via the empirical counterpart of \eqref{eq:Sigma_k}, and let $\widehat W_{I_h,n}=\min_{j\in I_h} \widehat Z_{j,n}$ where, without loss of generality,   $I_h\subseteq \widehat{ \mathcal{S}}_{k,n}$ because $\widehat S_{k,n}=\mathcal{S}_k$ almost surely for $n$ large enough.

For a given significance level $\alpha \in (0,1)$, we define the following testing critical thresholds
$$
\widehat t_{\alpha,n}(I_h) = \mathbf{1}_{\{I_h\neq\emptyset\}}  \ F^{-1}_{\widehat W_{I_h,n}}(\alpha),
$$
where $F_{Y}$ denotes the distribution function of a r.v.\ $Y$.
In the next preparatory proposition, we show that the test which rejects $H_0^{(k)}$ if and only if $\widehat T_{n, k} < \widehat t_{\alpha,n}(I_0)$ is asymptotically valid and consistent for any fixed alternative. 

\begin{proposition}
\label{prop:test-properties}
    Let  $p$ be the true p.m.f.\ and define  
    \begin{align*}
     \mathcal{H}_k : =  \{h \in \mathbb R:  \exists  \ j \in \mathcal S_k \ \text{such that} \ \nabla^k p(j) = h \}.
    \end{align*}
     For $h \in \mathcal H_k$, let $I_h$ be the set defined in \eqref{eq:Ih}. 
    \begin{enumerate}
    \item  If $p$ satisfies $H_0^{(k)}$ and $\rho_k=0$ then
    $ \lim_{n \to \infty} P( \widehat T _{k,n} < \widehat t_{\alpha,n}(I_0))= \alpha $.

    \item  If $p$ satisfies $H_0^{(k)}$ and $\rho_k>0$  then
    $ \lim_{n \to \infty} P( \widehat T _{k,n} < \widehat t_{\alpha,n}(I_0))=0 $.
    
    \item  If $p$ satisfies $H_1^{(k)}$  then for any integer $r \in \{1, \ldots,\vert \mathcal{H}_k \vert \} $ and distinct values $h_1, \ldots, h_r \in \mathcal H_k$ 
    $$ \lim_{n \to \infty} P( \widehat T _{k,n} < \widehat t_{\alpha,n}(\cup_{j=1}^r I_{h_j} )) =1. $$
    \end{enumerate}
\end{proposition}

Proposition \ref{prop:test-properties} is a consequence of the asymptotic result of Theorem \ref{thm:main}, the definition of $I_0$ and the consistency of $\widehat \Sigma_{k,n}$.  Note that the boundary case $\rho_k=0$ is the least favourable one under $H_0^{(k)}$ when using the test statistic $\widehat{T}_{k,n}$ for the testing problem \ref{eq:hp}. In this regard, it is well justified that the calibration is done for this case. Note that geometrically the condition $\rho_k =0$ means that the true p.m.f.\ $p$ has regions where $k$-monotonicity is not strict. In the monotone case, for example, $\rho_1 = 0$ if and only if $p$ has flat regions, that is, regions where $p$ is constant. We also remark that the slightly more general formulation given for point 3 of Proposition \ref{prop:test-properties} is used later when we replace the unknown $I_0$ with a suitable estimator. In the next subsection, we show that it is possible to design a set estimator which is consistent under $H_0^{(k)}$, but larger under $H_1^{(k)}$. Thus, point 3 of Proposition \ref{prop:test-properties} ensures that consistency under any fixed alternative is preserved.

\subsection{Data-driven selection of (non)-knots}
Proposition \ref{prop:test-properties} is a useful theoretical result, but cannot be implemented in practice since the quantile $\widehat t_{\alpha,n}(I_0)$ depends on the unknown set $I_0$. In the following, we propose an estimator of this set which is consistent in the sense that the probability that it is different from $I_0$ tends to $0$ under $H_0^{(k)}$. We will show a much stronger result by proving that the convergence happens at the $n^{-1/2}$-rate.  Later, we shall compare this estimation approach with two other alternative methods that we introduce in Section \ref{sec:num}.  Our estimation approach for the set $I_0$, which we will now describe, leads to the rejection region 
\begin{align}
\label{eq:generic-region}
    C_{k, n}(\alpha) = \{ x_{1:n}\in \mathcal{S}^n: \widehat{T}_{k,n}(x_{1:n}) <  \widehat {t}_{\alpha,n}(\widehat I_{0,n} ) \},
\end{align}
and is based on a data-driven selection of the elements of $I_0$. The selection procedure tests whether for a given integer $j\in \mathcal{S}_k$ the equality $\nabla^k p(j) = 0$ holds.  Under this equality and using the Central Limit Theorem, we have that
\begin{align}
\label{eq:convDist}
 \frac{\sqrt{n} \nabla^k \widehat p_n(j)}{\sqrt{(\Sigma_k)_{j-m+1,j-m+1}}} \overset{d}{\to}  \mathcal{N}(0,1).
\end{align}
Thus, this selection approach consists of conducting simultaneous tests which aim at picking all of the indices $j \in \widehat{\mathcal  {S}}_{k,n}$ that are \lq\lq compatible\rq\rq \ with the weak convergence stated in \eqref{eq:convDist}. Formally, we set
\begin{align}
\label{eq:datadriven}
    \widehat{I}_{0, n}:= \Big\{j \in \widehat{\mathcal{S}}_{k,n} :  \frac{\sqrt{n} \nabla^k \widehat p_n(j)} { \sqrt {(\widehat{\Sigma}_{k,n})_{j-m+1,j-m+1}} }  \leq z_ {1-1/n}\Big\}
\end{align}
 where $z_{\beta}$ is the $\beta$-order quantile of the standard Gaussian.
The following theorem gives the limit set of $\widehat I_{0,n}$ in all relevant cases. In particular, it implies that, under $H_0^{(k)}$, $\widehat{I}_{0,n}$ converges to the true set of the non-knot points $I_0$  at the $n^{-1/2}$-rate.

\begin{theorem}
\label{thm:selection}
    Let $\widehat I_{0,n}$ be the set defined in \ref{eq:datadriven}.  It follows
     \begin{align*}
      P(\widehat I_{0,n}  \ne {I}^* ) = O(1/\sqrt n)  
     \end{align*}
   where:  
    \begin{enumerate}
        \item If $\rho_k=0$ then ${I}^*  = I_0 $ .
        \item If  $\rho_k > 0$ then ${I}^*  = \emptyset $.
        \item If $\rho_k < 0$ then ${I}^*  = I_- \cup I_0$  with  $I_- = \cup_{h \in \mathcal H^-_k}  I_h$  and 
  \begin{align*}
  \mathcal H^-_k  =   \{h\in [\rho_k, 0): \exists \ j \in \mathcal{S}_k \ \text{such that} \ \nabla^k p(j)=h \}. 
  \end{align*}
   \end{enumerate}
\end{theorem}

\begin{remark}
A close inspection of the proof of Theorem  \ref{thm:selection} reveals that the quantile $z_{1-1/n}$ in (\ref{eq:datadriven}) can be replaced by $z_{1-c/n^a}$ for any $c > 0$ and $a > 1/2$. Taking $c=1$ and $a=1$ is a simple and effective choice ensuring a small value of the quantile and such that $1-c/n^a>0$ for any $n>1$. Moreover, since $z_{1- c/n^a} \sim\sqrt{ 2a  \log n} $, taking $a\in (1/2,1]$ is practically irrelevant, and $a=1$ is the most obvious choice. Numerical experiments show that the choice $z_{1-1/n}$ achieves very satisfactory results; see Section \ref{sec:num}.
\end{remark}

The results of Proposition \ref{prop:test-properties} and Theorem \ref{thm:selection} lead to the following corollary, which is a straightforward consequence and hence the proof is omitted.

\begin{corollary}
\label{cor:rejection-region}
Consider the test based on the rejection region $
C_{k,n}(\alpha)$ given in (\ref{eq:generic-region}) with $\widehat I_0$ as in (\ref{eq:datadriven}).
\begin{enumerate}
\item If $p$ satisfies $H_0^{(k)}$ and $\rho_k =0$ then $\lim_{n \to \infty} P(X_{1:n} \in C_{k,n}(\alpha)) = \alpha $. 

\item  If $p$ satisfies $H_0^{(k)}$ and $\rho_k  > 0$ then $\lim_{n \to \infty} P(X_{1:n}\in C_{k,n}(\alpha)) =0 $.

\item If $p$ satisfies $H_1^{(k)}$ then  $\lim_{n \to \infty} P(X_{1:n} \in C_{k,n}(\alpha)) =1 $.

\end{enumerate}

\end{corollary}

\subsection{Data-driven selection of \texorpdfstring{$k$}{k}}
\label{sec:khat}
Here we propose a data-driven approach to select the largest value of $k\in \mathbb{N}_0$ such that $p$ is $j$-monotone for all $j =0, \ldots, k $.  More precisely, let us define
\begin{align}
\label{eq:k0}
k_0 = 
\max \big \{ k \in \{0, \ldots, M-m \}: \text{$p$ is $j$-monotone for all $j \in \{0, \ldots, k \}$}\big\}.
\end{align}
Fix $\alpha \in (0,1)$ and consider the estimator $\widehat k_n$ defined as 
\begin{align}\label{eq:khat}
\widehat k_n  = 
\begin{cases}
0, \ \  \  \text{if $X_{1:n} \in C_{1, n}(\alpha)$}, \\
\max \big \{ k \in \{1, \ldots, X_{(n)}- X_{(1)} \}: X_{1:n} \notin C_{j,n}(\alpha) \ \text{for all $j \in \{1, \ldots, k \}$}\big\},  \  \text{otherwise}.
\end{cases}
\end{align}

In the following Proposition, we show that the probability that $\widehat k_n$ agrees with $k_0$ is asymptotically at least $1-\alpha$. 
\begin{proposition}\label{prop:selectionk}
Let $\widehat k_n$ and $k_0$ be defined as in \eqref{eq:k0} and \eqref{eq:khat}.   It holds that
\begin{align*}
\limsup_{n \to \infty} P(\widehat k_n  \ne k_0 )   \le \alpha.
\end{align*}
Moreover, if $k_0 = 0$, then $\lim_{n \to \infty} P(\widehat k_n  \ne 0 ) = 0$.
\end{proposition}

\subsection{Contiguity and separation}
In Proposition \ref{prop:test-properties}, we have shown that our testing procedure is consistent against any fixed alternative. In the next Proposition, we perform a local power analysis and study the behaviour of the test against local alternatives; see \cite{van2000asymptotic} for a general reference.
In particular, we study the asymptotic behaviour of the test under contiguous alternatives that shrink to the null hypothesis $H_0^{(k)}$ at the $n^{-1/2}$-rate.

 \begin{definition}
 \label{def:contig}
  Let $p$ be a p.m.f.\ satisfying $\rho_k=0$, and denote again by $I_0$ the set of its non-knots. We say that $(p_n)_n$ is a sequence of alternatives  contiguous to $p$ if
  \begin{align*}
  p_n(j) = p(j) + \frac{h(j)}{\sqrt n} , \quad j \in \mathcal S,
  \end{align*}
for some function $h: \mathcal S \to \mathbb R$ such that $\sum_{j \in \mathcal S} h(j) = 0$, $\nabla^k h(j) \le 0$ for all $j \in I_0$ and $\min_{j \in I_0 }\nabla^k h(j) = - \delta$ for some $\delta > 0$. 
  \end{definition}

 In the next theorem, under the contiguous alternative defined in Definition \ref{def:contig}, we derive the asymptotic behaviour of the statistic $\sqrt{n}(\widehat{\rho}_{k,n}-\min_{j\in \mathcal{S}_k}\nabla^k p_n (j))$, of the set $\widehat I_{n,0}$ and, finally, we show that our test is asymptotically unbiased.
\begin{theorem}
\label{thm:contig}
  Let $X_{n,1}\dots,X_{n,n}$ be a triangular array of i.i.d.\ samples drawn from $p_n$. Then, it holds:
  \begin{enumerate}
    \item As $n \to \infty$,
    \begin{align*}
    \sqrt{n}(\widehat{\rho}_{k,n}-\min_{j\in \mathcal{S}_k}\nabla^k p_n (j))\overset{d}{\to}   \min_{j \in I_0}  (Z_j  +  \nabla^k h(j) )    + \delta
    \end{align*}
    with  $(Z_j)_{j\in \mathcal{S}_k}\sim \mathcal{N}(0,\Sigma_k)$ such that
    $$
    (\Sigma_k)_{r,s} =  \cov[\nabla^k\mathbf{1}_{\{X_{1}= m+r-1 \}},\nabla^k\mathbf{1}_{\{X_{1}= m+s-1 \}}]
    $$ 
    for $r,s \in \{1,\dots, M-k-m-1\}$ and $X_1\sim p$.
    \item $P(\widehat{I}_{0,n}\neq I_0) = O(1/\sqrt{n})$.
    \item For all $\alpha\in (0,1)$, 
    $$\liminf_{n\to\infty}P(\widehat{T}_{k,n}< \widehat t_{\alpha,n}(\widehat{I}_{0,n})) \ge \alpha.$$
    If $\max_{j \in I_0} \nabla^k h(j) = - \kappa $ for some $\kappa \in (0, \delta]$, then 
    \begin{align*}
     \liminf_{n\to\infty}P(\widehat{T}_{k,n}< \widehat t_{\alpha,n}(\widehat{I}_{0,n})) \ge P( W_{I_0} \le \widehat t_{\alpha,n}(I_0) + \kappa) > \alpha.
     \end{align*}
    
  \end{enumerate} 
  \end{theorem}

In the following theorem, we provide lower bounds for both the sample size and $\vert \rho_k \vert = - \rho_k $ under $H_1^{(k)}$ so that the $k$-monotonicity test has at least a given power.  Note that the result is non-asymptotic in the sense that the power will be larger or equal to the given threshold whenever $n$ and $\vert \rho_k \vert$ exceed the lower bounds given in the theorem.      

\begin{theorem}\label{thm:seperation}
Let $\xi =  \min_{j \in \mathcal S_k} p(j) \wedge \min_{j \in \mathcal S_k} (1-p(j))$ and suppose that the true p.m.f. $p$ satisfies $H_1^{(k)}$. Let $r > 0$ be such that $\vert \rho_k \vert = - \rho_k \in (0, r]$. Then, for any given $\beta \in (0,1)$,
\begin{align*}
P(X_{1:n} \in C_{k,n}(\alpha)) \ge 1- \beta,
\end{align*}
provided that
\begin{align*}
 n  \ge    \frac{\left(4 + (4 + 5 \sqrt 2) \vert \mathcal S_k \vert \right)^2}{8 \beta^2 \xi} 
 \end{align*}
and
\begin{align*}
\sqrt n\vert \rho_k \vert \ge  \sqrt \xi  \Big(\binom{2k}{k} + r^2 (k+1)\Big)^{1/2}  z_{1-\beta/2}  + \binom{2k}{k}^{1/2} z_{1- \frac{\alpha}{\vert \mathcal S_k \vert}}.
\end{align*}
 
\end{theorem}

\begin{remark}
Theorem  \ref{thm:seperation} is stated under the assumption that $\vert \rho_k \vert \le r$ for some $r > 0$. This assumption is not a real restriction since $\vert \rho_k \vert $ is always bounded. Indeed, using the expression $\nabla^k p(j) = \sum_{l=0}^k (-1)^l \binom{k}{l} p(j + l)$, it follows that $\vert \nabla^k p(j) \vert \le  2^k$, and hence $\vert \rho_k \vert \le 2^k$. Note also that $\xi \le 1/2$, thus, the power is at least $ 1- \beta$ under the more stringent condition  
\begin{align*}
\sqrt n \vert \rho_k \vert \ge  \frac{1}{\sqrt 2} \Big(\binom{2k}{k} + 2^{2k} (k+1)\Big)^{1/2}  z_{1-\beta/2}  +  \binom{2k}{k}^{1/2} z_{1- \frac{\alpha}{\vert \mathcal S_k \vert}}.
\end{align*}    
\end{remark}

\subsection{Comparison with \texorpdfstring{\cite{GIGUELAY201896}}{GIGUELAY201896}}
Before moving to the next sections where we will put our focus on testing the important shapes of monotonicity ($k=1$) and convexity ($k=2$), we would like to note that our approach differs fundamentally from the one used by \cite{GIGUELAY201896}.  First, our test is based on the results of Theorem \ref{thm:main} which characterizes the limiting behaviour of $\min_{j \in \widehat{\mathcal{S}}_{k,n}} \nabla^k \widehat p_n(j)$ in general, and not only under $k$-monotonicity. Second, our test exhibits a type I error probability attaining the nominal level $\alpha$ for the case where $\rho_k =0$, which is, as already mentioned above, equivalent to non-strict $k$-monotonicity. In other words, the set of p.m.f.s under which the asymptotic rejection probability of our test is equal to $\alpha$ includes many p.m.f.s which are allowed to have strictly $k$-monotone parts besides \lq\lq flat\rq\rq \ ones.  For example, for $k=1$,  this set consists of $2^{\vert \mathcal {S}_1\vert}  -1$ p.m.f.s, while the same set in the case of the test by \cite{GIGUELAY201896} contains only the uniform p.m.f.

Indeed, \cite{GIGUELAY201896} do not directly derive the limit distribution of $\widehat{T}_{k,n}=\sqrt{n} \min_{j \in \widehat {\mathcal{S}}_{k,n}}  \nabla^k \widehat p_n(j)$ but rather that of 
$$
\sqrt{n}\min_{j \in \widehat {\mathcal{S}}_{k,n}} \{ \nabla^k \widehat p_n(j) - \nabla^k p(j)  \},
$$
which is stochastically smaller under $k$-monotonicity; i.e.\ $H_0^{(k)}$. In fact, when $p$ is $k$-monotone we have $\nabla^k p(j) \ge 0$ for all $j \in \mathcal{S}_k$, implying that for all $j \in \widehat {\mathcal{S}}_{k,n}$ it holds
$$
\nabla^k \widehat p_n(j)  \ge \nabla^k \widehat p_n(j)  -  \nabla^k  p(j).
$$
Thus, for any $t \in \mathbb R$,
\begin{align*}
%\label{eq:giguelay}
P\big(\sqrt{n}\min_{j \in \widehat{\mathcal{S}}_{k,n}} \nabla^k \widehat p_n(j)  \le t \big)   \le  P\big(\sqrt{n}\min_{j \in \widehat{\mathcal{S}}_{k,n}} \{ \nabla^k \widehat p_n(j) - \nabla^k p(j)  \} \le t \big).
\end{align*}
Since the Central Limit Theorem and the Continuous Mapping Theorem can be applied to the sequence  $\sqrt{n}\min_{j \in \widehat{\mathcal{S}}_{k,n}} \{ \nabla^k \widehat p_n(j) - \nabla^k p(j)  \}$, the above inequality gives a way of building an asymptotically valid test for a given nominal level $\alpha$. However, the resulting test of \cite{GIGUELAY201896} is conservative since the type I error probability is, by construction, always strictly smaller than $\alpha$ with equality occurring if the true p.m.f is completely \lq\lq flat\rq\rq, that is and only if $\nabla^k p(j)=0$ for all $j\in\widehat {\mathcal{S}}_k$. Figure \ref{fig:pmf-comparison} shows a comparison of (some) p.m.f.s used for calibration of our test and the test of \cite{GIGUELAY201896}. 

\begin{figure}[!ht]
\centering
\includegraphics[width = \textwidth]{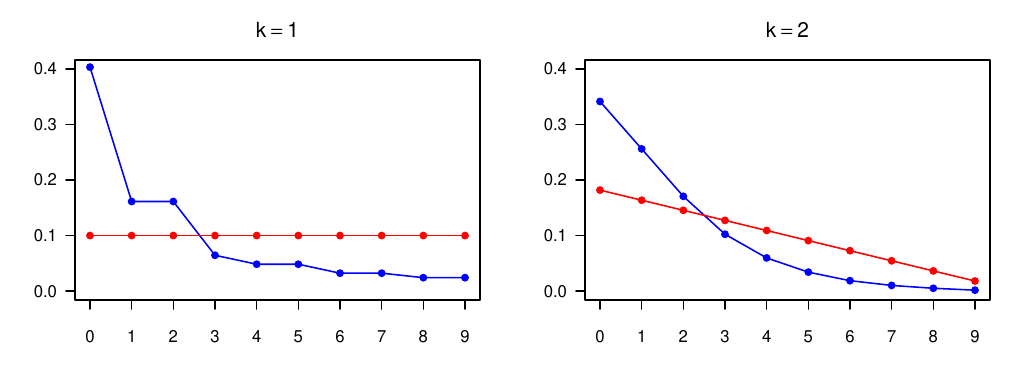}
\vspace{-1cm}
\caption{Visual comparison of $k$-monotone (for $k=1,2$)  p.m.f.s used for calibration of the proposed test (blue) and the test of \cite{GIGUELAY201896} (red). }
\label{fig:pmf-comparison}
\end{figure}

\section{Testing monotonicity and convexity}
\label{sec:mono-conv}
In this section, we first apply the results of Section \ref{sec:tests} to monotonicity and convexity; i.e., $k \in \{1, 2 \}$. We decided to focus on such cases because they are the most relevant both in practical and methodological settings. Besides the test of Section  \ref{sec:tests}, we will propose alternative tests based on the Grenander and convex least squares estimator for $k \in \{1, 2 \}$ respectively. Although the asymptotic theory of the statistics involved in these alternative tests has been established in prior work, our main goal is to show how one can use the data-driven selection procedure developed above to construct asymptotically valid and consistent tests. It must be pointed out that for such a testing framework it is much more difficult to establish theoretical guarantees for the contiguity setting and to derive quantitative separation results. Such investigation is left for future research.
In the sequel, we denote the Euclidean norm by $\Vert \cdot \Vert_2$ and refer to it as the $\ell_2$-norm.
\subsection{Monotonicity}
\label{sec:mono}
Here, we discuss procedures that are specifically tailored to testing monotonicity. First of all, we briefly specify the results obtained in Section \ref{sec:tests} for $k=1$ and subsequently apply our methodology for knot selection to an alternative approach based on the $\ell_2$-distance between the empirical estimator and its monotone projection, better known as the Grenander estimator. We show that the limit distribution of the re-scaled $\ell_2$-distance can be consistently approximated using the proposed knot selection procedure.

\subsubsection{Testing monotonicity via the empirical measure}

Based on the results shown in Section \ref{sec:tests}, our $k$-monotonicity test via the empirical measure rejects $H_0^{(1)}: \rho_1 \ge 0$ versus $H_1^{(1)}: \rho_1 < 0$ if and only if 
\begin{align*}
\widehat T_{1, n} = \sqrt n \widehat{\rho}_{1, n}  < \widehat t_{\alpha,n}(\widehat{I}_{0, n})
\end{align*}
where
\begin{align}
\label{eq:datadriven-monotone}
\widehat I_{0,n} = \Big\{j \in \widehat {S}_{1,n} :  \frac{\sqrt{n} (\widehat p_n(j) - \widehat p_n(j+1))}{ \sqrt {(\widehat{\Sigma}_{1,n})_{j-m+1,j-m+1}} }  \leq z_ {1-1/n} \Big\},
\end{align}
By Theorem \ref{thm:selection} and consistency of $\widehat {\Sigma}_{1,n}$, under the least favourable case $\rho_1 = 0$,  $\widehat t_{\alpha,n}(\widehat{I}_{0, n})$ converges in probability to the $(1-\alpha)$-quantile of $W_{I_0} = \min_{j\in I_{0}} Z_j$ (see Lemma \ref{lem:convquant}), where $I_0$ is the set of the non-knots of $p$,  and 
$(Z_{m},\dots,Z_{M-1})\sim \mathcal{N}(0,\Sigma_1)$ with $\Sigma_1$ given in Proposition \ref{prop:clt_monotonicity} in the Appendix. Moreover, by Corollary \ref{cor:rejection-region}, the test is asymptotically valid and consistent.
Note that other methods for approximating the set $I_0$ give rise to different monotonicity tests. More details will be given in Section \ref{sec:num}.

\subsubsection{Testing monotonicity via the Grenander estimator}
Here, we propose another testing procedure which is based on the monotone projection of the empirical estimator of $p$ where we apply the knot selection method of Section \ref{sec:tests}. 
First, we recall that the monotone projection of $\widehat p_n$ (also known as the Grenander estimator) denoted by $\widehat{p}^{\mathcal{M}}_n$ is given by
\begin{align*}
\widehat{p}^{\mathcal{M}}_n  = \underset{\substack{q \in \mathcal M} }{\arg \min} - \frac{1}{n} \sum_{i=1}^n \log q(X_i) = \underset{\substack{q \in \mathcal M} }{\arg \min} -  \sum_{j \in \mathcal S} \widehat p_n(j) \log q(j),
\end{align*}
where $\mathcal M$ is the set of non-increasing p.m.f.s. supported on $\mathcal S$ (here, we use the fact that the observed support $\widehat S_n$ and $\mathcal S$ are equal with probability 1 for $n$ large enough).  It is well-known that $\widehat{p}^{\mathcal{M}}_n$ is the vector of the left slopes of the least concave majorant of the empirical distribution function of $X_1, \ldots, X_n$; see \cite{jankowski2009} for more details. Note that the Grenander estimator is also equal to the $\ell_2$-projection of $\widehat p_n$ on the set $\mathcal M$, see  e.g. \cite[Example 1.10]{barlow1972}, or \cite[Theorem 1.5]{Robert88}. More specifically, 
\begin{align*}
\widehat{p}^{\mathcal{M}}_n  = \underset{\substack{q \in \mathcal M} }{\arg \min} \Vert \widehat p_n - q \Vert_2.
\end{align*}
In the following, we denote  by $\mathrm{gren}$ the operator which projects, in the sense of the $\ell_2$-norm, a given vector $v= (v_1, \ldots, v_q)$ on the set of vectors in $\mathbb R^q$ with non-increasing components; i.e., 
$$
\mathrm{gren}(v) =  \underset{\substack{w \in \mathbb R^q: w_1 \ge \ldots \ge w_q} }{\arg \min}  \Vert v - w \Vert_2.
$$
For the purpose of testing monotonicity of $p$, we can introduce the following test statistic based on $\widehat{p}^{\mathcal{M}}_n$, 
\begin{align*}
    \widehat T^\mathcal{M}_n:= \sqrt{n} \| \widehat p_n^{\mathcal{M}}- \widehat p_n\|_2.
\end{align*}
whose asymptotic behaviour is derived in \cite{jankowski2009}.   Let $p$ be a monotone non-increasing p.m.f. and denote by $J_0  = \{j_1, \ldots, j_q \} \subseteq \mathcal S_1$ the set of its knots (see Definition \ref{def:k-mon}). Then, it has been shown  in \cite[Corollary 4.1]{jankowski2009} that  
$$
\widehat T^\mathcal{M}_n \overset{d}{\to} \mathcal{T}^\mathcal{M}(J_0):= \| G^{\mathcal{M}}(J_0)-G\|_2,
$$
as $n\to\infty$, where $G\sim\mathcal{N}(0,\Gamma) $ is such that 
$$(\Gamma)_{r,s} = \mathbf{1}_{\{r=s\}}p(m -1 +r) -p(m -1 +r)p(m -1 + s)$$ for $r,s\in \{1,\dots,M-m+1\}$, and 
$G^{\mathcal{M}}(J_0)  =  (G^{\mathcal{M}}_1,  \ldots, G^{\mathcal{M}}_{M-m+1})$ is constructed as follows: 
For a constancy region 
$$\{u, \ldots, v \}  \in  \Big \{ \{m, \ldots, j_1 \}, \{j_1 +1, \ldots, j_2 \}, \ldots, \{j_{q}+1, \ldots, M \} \Big \},$$
we have that 
\begin{align*}
(G^{\mathcal{M}}_{u-m+1},  \ldots,  G^{\mathcal{M}}_{v-m+1})  = \mathrm{gren}(G_{u-m+1},  \ldots,  G_{v-m+1}).
\end{align*}
Note that when $p$ is strictly monotone, then $J_0 = \mathcal S_1$ and  $G^{\mathcal{M}}(J_0) = G $ with probability 1.  

A good approximation of the distribution of $\mathcal T^{\mathcal{M}}(J_0)$ under $H_0^{(1)}$ requires having a good guess about the constancy regions of $p$, or equivalently $J_0$. However, this is not available. This issue can be solved directly by the selection method described in Section \ref{sec:tests}. Indeed, for a given $\alpha\in(0,1)$ we will consider the rejection region
\begin{align}
\label{eq:region_mono}
     C_{n}^\mathcal{M}(\alpha)=\{ x_{1:n}\in \mathcal{S}^n: \widehat T^\mathcal{M}_n(x_{1:n}) > \widehat t_{1-\alpha,n}^\mathcal{M}(\widehat J_{0,n}) \},
\end{align}
where $\widehat t_{1-\alpha,n}^\mathcal{M}(\widehat J_{0,n})$ is the $(1-\alpha)$-quantile of the r.v.\  $\widehat{\mathcal {T}}^{\mathcal M}_n(\widehat J_{0, n})$ obtained replacing $\Gamma$ with its empirical estimator $\widehat \Gamma_n$ in the definition of $\mathcal T^{\mathcal M}(\widehat J_{0,n})$, and
$\widehat J_{0, n}  =\widehat {\mathcal S}_{1, n} \setminus \widehat I_{0,n}$  with $\widehat I_{0,n}$ as in \eqref{eq:datadriven-monotone}. In the following corollary, we provide theoretical guarantees for the testing procedure. 
\begin{corollary}
\label{cor:rejection-region-mono-proj}
Consider the test based on the rejection region $
C_{n}^{\mathcal{M}}(\alpha)$ given in (\ref{eq:region_mono}).
\begin{enumerate}
\item If $p$ satisfies $H_0^{(1)}$ and $\rho_1 =0$ then $\lim_{n \to \infty} P(X_{1:n} \in C_{n}^\mathcal{M}(\alpha)) = \alpha $. 

\item  If $p$ satisfies $H_0^{(1)}$ and $\rho_1  > 0$ then $\lim_{n \to \infty} P(X_{1:n}\in C_{n}^\mathcal{M}(\alpha)) =0 $.

\item If $p$ satisfies $H_1^{(1)}$ then  $\lim_{n \to \infty} P(X_{1:n} \in C_{n}^\mathcal{M}(\alpha)) =1 $.

\end{enumerate}

\end{corollary}

\subsection{Convexity}
\label{sec:conv}

As for monotonicity,  we describe procedures specifically tailored for testing convexity. Once again, we proceed by first specifying the results of Section \ref{sec:tests} for $k=2$ and, subsequently, applying the knot selection method to an alternative testing approach based on the $\ell_2$-distance between the empirical estimator and its convex projection proposed in \cite{DUROT2013282}.

\subsubsection{Testing convexity via the empirical measure}

Based on the results shown in Section \ref{sec:tests}, our $k$-monotonicity test via the empirical measure rejects $H_0^{(2)}: \rho_2 \ge 0$ versus $H_1^{(2)}: \rho_2 < 0$ if and only if 
\begin{align*}
\widehat T_{2, n} = \sqrt n \widehat{\rho}_{2, n}  < \widehat t_{\alpha,n}(\widehat{I}_{0, n})
\end{align*}
where
\begin{align}
\label{eq:datadriven-convex}
\widehat I_{0,n} = \Big\{j \in \widehat {\mathcal {S}}_{2,n} :  \frac{\sqrt{n} (\widehat p_n(j+2) - 2 \widehat p_n(j+1) + \widehat p_n(j))}{  { \sqrt {(\widehat{\Sigma}_{2,n})_{j-m+1,j-m+1}} }}  \leq z_ {1-1/n} \Big\}.
\end{align}
By Theorem \ref{thm:selection} and consistency of $\widehat {\Sigma}_{2,n}$, under the least favourable case $\rho_2 = 0$,  $\widehat t_{\alpha,n}(\widehat{I}_{0, n})$ converges in probability to the $(1-\alpha)$-quantile of $W_{I_0} = \min_{j\in I_{0}} Z_j$ (see Lemma \ref{lem:convquant}), where $I_0$ is the set of the non-knots of $p$,  and 
$(Z_{m},\dots,Z_{M-2})\sim \mathcal{N}(0,\Sigma_2)$ with $\Sigma_2$ given in Proposition \ref{prop:clt_convexity} in the Appendix. Moreover, by Corollary \ref{cor:rejection-region}, the test is asymptotically valid and consistent.
Note that other methods for approximating the set $I_0$ give rise to different convexity tests. More details will be given in Section \ref{sec:num}.

\subsubsection{Testing convexity via the convex LSE}
\cite{BALABDAOUI20188} propose a procedure to test convexity of a p.m.f.\ supported on $\{0, \dots, M\}$ for some integer $M \ge 1$.  In their scope,  the p.m.f.\ $p$,  although finitely supported, is assumed to be convex on $\mathbb N_0$ meaning that the definition of convexity is the same as the one used in \cite{giguelay2017}.  The proposed test is based on the convex least squares estimator (CLSE) of $p$ studied in \cite{DUROT2013282}.  More specifically, the CLSE of $p$ is given by 
\begin{align*}
\widehat p^{\mathcal{C}}_n:= \underset{q \in \mathcal C}{\arg \min} \|\widehat p_n - q\|_2 
\end{align*}
where $\mathcal C $ is the set of convex p.m.f.s supported on $\mathcal S$, see \cite{DUROT2013282} for general details and \cite{balabdaoui2017} for the characterization of $\widehat p^{\mathcal{C}}_n $ and its asymptotic behaviour.
Here, similarly to \cite{BALABDAOUI20188}, we propose a testing procedure based on a slightly modified CLSE $\widehat p^{\mathcal{C}}_n$ where the convexity constraint is imposed only on the empirical support $\widehat{\mathcal{S}}_n=\{X_{(1)}, \ldots,  X_{(n)} \}$ and not on the whole $\mathbb{N}_0$. Let us consider the test statistic 
\begin{align*}
    \widehat T _ n^{\mathcal{C}} := \sqrt{n}\| \widehat p^{\mathcal{C}}_n - \widehat p_n  \|_2.
\end{align*}
The asymptotic behaviour follows from Theorem 1.1 of \cite{BALABDAOUI20188}. In particular, when $p$ is convex and $J_0  = \{j_1, \ldots, j_q\} \subseteq \mathcal S_2$ the set of its knots  (see Definition \ref{def:k-mon}), then
 $$ \widehat T_n^{\mathcal{C}} \overset{d}{\to} \mathcal T^{\mathcal{C}}(J_0):=\|G^{\mathcal{C}}(J_0)-G\|_2,$$
 as $n\to\infty$, where $G  \sim \mathcal{N}(0,\Gamma)$ is as above,  and $G^{\mathcal{C}}(J_0)=(G^{\mathcal{C}}_1,  \ldots, G^{\mathcal{C}}_{M-m+1})$ is defined as follows: For a linear region 
 \begin{align*}
  \{u,\ldots, v \} \in \Big \{ \{m, \ldots, j_1\}, \{j_1 +1, \ldots, j_2 \}, \ldots, \{j_{q}+1, \ldots, M \}  \Big \},  
 \end{align*}
 we have that 
\begin{align*}
(G^{\mathcal{C}}_{u-m+1},  \ldots,  G^{\mathcal{C}}_{v-m+1})  =  \underset{{\substack{ \ q \ \text{convex on} \\ \{u,\ldots, v\}}}}{ \arg \min} \big(\sum_{l=u}^v  \left(G_{l-m+1}  -  q(l) \right)^2\big)^{1/2}.
\end{align*}

The distribution of the limit $\mathcal T ^{\mathcal{C}}(J_0)$  depends on the unknown location of the knots of the true p.m.f.\ $p$. Building on the results established in \cite{balabdaoui2017}, \cite{BALABDAOUI20188} use a non-negative sequence $\{v_n\}_n$ such that $v_n = o(1)$ and $v_n \gg n^{-1/2}$ to select the knots in the following way: If $\nabla^2 \widehat{p}_n(j) \le v_n$, then $j$ is declared to belong to a linear region of $p$.  In other words,  a point in the support is considered a knot of $p$ if $\nabla^2 \widehat{p}_n(j) > v_n.$  The main drawback of this approach is that there are infinitely many sequences $\{v_n\}_n$ that can be used, and it is unclear which ones are optimal for the particular distribution at hand. Thus, the arbitrariness in the choice of the sequence $\{v_n\}_n$ makes the resulting testing procedure strongly dependent on $p$, as confirmed in the simulation study presented in \cite{BALABDAOUI20188}. The authors also propose an alternative calibration method under the \lq\lq least favourable hypothesis\rq\rq. While the resulting test does not depend on a tuning parameter, it is by definition conservative since its type I error probability is always (asymptotically) strictly smaller than the nominal level, which is only approached when $p$ is a triangular p.m.f.\ and $n$ is large.

As done in the monotone problem, we use the alternative approach based on the non-knot selection given in \eqref{eq:datadriven}. For a given $\alpha\in(0,1)$ we will consider the rejection region
\begin{align}
\label{eq:region_conv}
     C_{n}^\mathcal{C}(\alpha)=\{ x_{1:n}\in \mathcal{S}^n: \widehat T^\mathcal{C}_n(x_{1:n}) > \widehat t_{1-\alpha, n}^\mathcal{C}(\widehat J_{0,n}) \},
\end{align}
where $\widehat t_{1-\alpha,n}^\mathcal{C}(\widehat J_{0,n})$ is the $(1-\alpha)$-quantile of the r.v.\  $\widehat{\mathcal {T}}^{\mathcal C}_n(\widehat J_{0, n})$ obtained replacing $\Gamma$ with its empirical estimator $\widehat \Gamma_n$ in the definition of $\mathcal T^{\mathcal C}(\widehat J_{0,n})$, and
$\widehat J_{0, n}  =\widehat {\mathcal S}_{1, n} \setminus \widehat I_{0,n}$  with $\widehat I_{0,n}$ as in \eqref{eq:datadriven-monotone}.
Similarly to Corollary \ref{cor:rejection-region-mono-proj}, in the next statement we provide theoretical guarantees that directly follow from Theorem \ref{thm:selection}. The proof is omitted since it directly follows from the results of \cite{BALABDAOUI20188} combined with the same arguments provided in the proof of Corollary \ref{cor:rejection-region-mono-proj}.

\begin{corollary}
\label{cor:rejection-region-conv-proj}
Consider the test based on the rejection region $
C_{n}^{\mathcal{C}}(\alpha)$ given in (\ref{eq:region_conv}).
\begin{enumerate}
\item If $p$ satisfies $H_0^{(2)}$ and $\rho_2 =0$ then $\lim_{n \to \infty} P(X_{1:n} \in C_{n}^\mathcal{C}(\alpha)) = \alpha $. 

\item  If $p$ satisfies $H_0^{(2)}$ and $\rho_2  > 0$ then $\lim_{n \to \infty} P(X_{1:n}\in C_{n}^\mathcal{C}(\alpha)) =0 $.

\item If $p$ satisfies $H_1^{(2)}$ then  $\lim_{n \to \infty} P(X_{1:n} \in C_{n}^\mathcal{C}(\alpha)) =1 $.

\end{enumerate}

\end{corollary}

\section{Numerical experiments}
\label{sec:num}

\subsection{Preliminaries}

In this section, we inspect the performance and applicability to real data of the proposed testing methodologies.
To provide an extensive comparison, in the following, we evaluate the performance of the following tests and resulting $\widehat k_n$ (as defined in Section \ref{sec:khat}) for a given nominal level $\alpha=0.05$:
\begin{enumerate}
    \renewcommand{\labelenumi}{(\roman{enumi})}
\item Test based on the rejection region \ref{eq:generic-region} with 
$$
\widehat I_{0,n}=\widehat{\mathcal  {S}}_{k,n} = \{X_{(1)},\ldots,X_{(n)} - k \},
$$
which is equal to $\mathcal{S}_k$ almost surely as $n\to \infty$, as a consequence of the Strong Law of Large Numbers. Although it is the simplest choice, it is not consistent for $I_0$. Such a choice leads to a test that is equivalent to the one proposed in \cite{GIGUELAY201896}, and since the resulting limiting distribution is stochastically smaller than $W_{I_0}$, the test is by definition conservative and less powerful.
\item Test based on the rejection region \ref{eq:generic-region} with
\begin{align*}
   \widehat I_{0,n}= \{j \in \widehat{\mathcal  {S}}_{k,n} : \nabla^k \widehat p_n(j) \leq v_{k,n} \}.
\end{align*}
We set $v_{k,n} = n^{-1/(k+2)} $ so that we recover the approach of \cite{BALABDAOUI20188} where the authors use $v_{k,n}=v_{n}=n^{-1/4}$ for testing convexity.

\item Test based on the rejection region \ref{eq:generic-region} with $\widehat I_{0,n}$ defined in \eqref{eq:datadriven} for which we derived full theoretical asymptotic guarantees in Theorem \ref{thm:selection}.
\item Test based on either the rejection region \eqref{eq:region_mono} (for monotonicity) or rejection region \eqref{eq:region_conv} (for convexity). The resulting test is more computationally intensive and complex to implement as it requires the computation of either $\widehat p_n^{\mathcal{M}}$ or $\widehat p_n^{\mathcal{C}}$ and of local projections for approximating the limit distribution.
\end{enumerate}

\subsection{Validation of the asymptotic theory}

To illustrate the validity of the theoretical results of Section \ref{sec:tests}, in Figure~\ref{fig:test_emp} we present the kernel density estimators of the true distribution of $\widehat T_{k,n}$, obtained for $k=1,2$ and $n=1000$. We compare the true distribution with its theoretical limiting behaviour using the true knots and our estimation of these knots, using the three proposed methods. The true $p$ is that of a truncated Poisson  with support $\{0,\ldots, 4\}$ and parameters $\lambda=1$ (for $\widehat T_{1,n}$) and $\lambda=2-\sqrt{2}$ (for $\widehat T_{2,n}$), so that $p$ is monotone and convex, respectively; see Table \ref{tab:Poisson}.
To plot the true distribution of the test statistic for $n=1000$ we generated 5000 independent Monte Carlo replicas of $(X_1,\dots, X_{1000})$ and each time we computed the statistic. To plot the theoretical limit, we drew a single sample of size 1000 from $p$, and used it to draw 5000 independent samples from the limiting random variable  $W_{I_0}$ and from its approximations obtained using the methods adopted in Tests (i), (ii) and (iii), respectively. Figure~\ref{fig:test_emp} shows that (i) provides an approximation of $W_{I_0}$ which is stochastically smaller, hence leading to a conservative test for $k=1,2$. Similarly, (ii) provides a stochastically smaller approximation for $k=2$.
On the other hand, (iii) provides a consistent estimation of the set $I_0$, leading to accurate approximations of the theoretical limiting r.v.\ $W_{I_0}$ for $k=1,2$.

\begin{figure}[!ht]

\centering
\includegraphics[width = \textwidth]{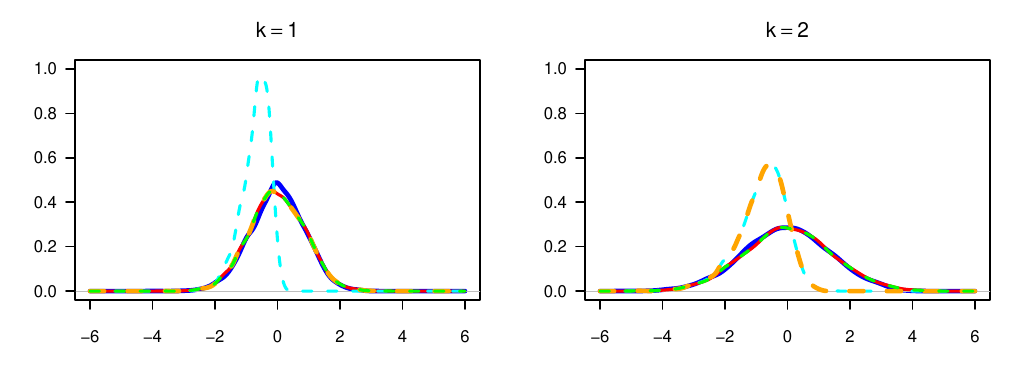}  
\vspace{-1cm}
\caption{Density of $\widehat T_{k,n}$ for $k=1,2$ and $n=1000$, its theoretical limit and its approximations: true density (blue), theoretical limit (red), approximated limit with (i) (dashed cyan), approximated limit with (ii) (dashed orange), approximated limit with (iii) (dashed green). }
\label{fig:test_emp}
\end{figure}

In Figure~\ref{fig:test_shape} we compare the kernel density estimators of the true distribution of $\widehat T^{\mathcal{M}}_n$ and $\widehat T^{\mathcal{C}}_n$, for $n=1000$ with their theoretical limiting behaviour using the true knots and our estimation of these knots. The true $p$ is chosen as done for Figures~\ref{fig:test_emp}. The true distribution is obtained as above, and for the theoretical limit, we drew a single sample of size 1000 from $p$, and used it to draw 5000 independent samples from the limit distribution and its approximation. The results show that the method provides a consistent approximation of the set of knots for both monotonicity and convexity.

\begin{figure}[!ht]

\centering
\includegraphics[width = \textwidth]{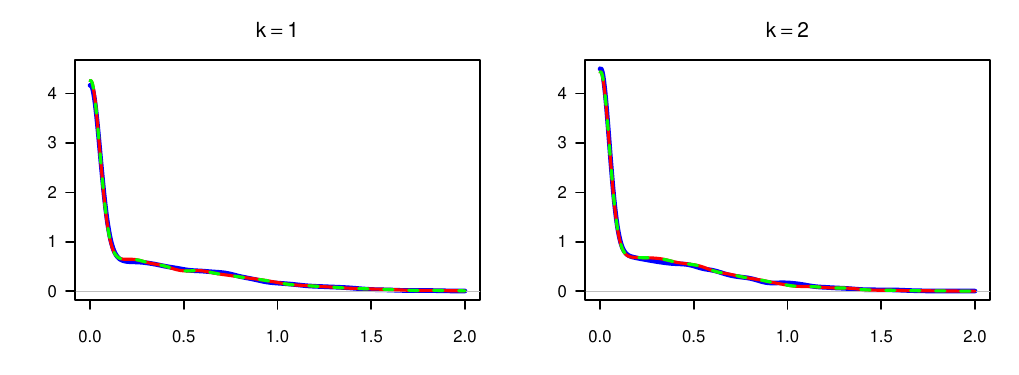} 
\vspace{-1cm}
\caption{Density of $\widehat T^\mathcal{M}_n$ (on the left) and $\widehat T^\mathcal{C}_n$ (on the right) for $n=1000$, their theoretical limits and their approximations: True density (blue), theoretical limit (red), and approximated limit (dashed green). }
\label{fig:test_shape}
\end{figure}

\subsection{Finite sample performance of the tests}
To evaluate the finite sample performance of the tests, independent Monte Carlo samples were generated under $H_0^{(k)}$, for $k=1,2$, and under some fixed alternatives.
The results for the empirical power function of the tests under the null and alternative hypotheses are reported as the percentage of rejections of the tests at the nominal level $\alpha = 0.05$. 

For our numerical investigation, we consider the following distributions: Truncated Poisson distributions denoted by $\mathcal{P}(m,M,\lambda)$ where $0<m<M$ and $\lambda$ can be selected according to the required order of $k$-monotonicity, see \cite{GIGUELAY201896} and Table 3 in \cite{balabdaoui2019multiple}. Specifically we consider $\lambda=2$ for non-monotonicity, $\lambda=1$ for monotonicity ($k=1$) and $\lambda=2-\sqrt{2} $ for convexity ($k=2$). Table \ref{tab:Poisson} summarises the considered scenarios.
\begin{table}[!ht]
 \centering
 \caption{Monotonicity and convexity of the Poisson distribution at different parameter values.}
  \medskip 
 \begin{tabular}{lcc}
        \toprule
        $\lambda$ $\backslash$ Test   & monotonicity & convexity \\
        \midrule
        $2$ (non-monotone)                       & $H_1^{(1)}$ & $H_1^{(2)}$  \\
        $1$ (1-monotone)                         & $H_0^{(1)}$ & $H_1^{(2)}$  \\
        $2-\sqrt{2}$ (2-monotone)                & $H_0^{(1)}$ & $H_0^{(2)}$  \\
        \bottomrule
    \end{tabular}
    \label{tab:Poisson}
\end{table}
Other p.m.f.s that are convex on $\{0,\dots,M\}$ are the mixtures of triangular distributions
\begin{align*}
    p= \sum_{r=1}^{M+1}\pi_r\mathcal{T}_r
\end{align*}
where $\mathcal{T}_r(i)= \frac{2(r-i)_+}{r(r+1)}$ is the Triangular p.m.f.\ supported on $\{0,\dots,r-1\}$. We denote such law as $\mathcal{MT}(\pi_1,\dots,\pi_{M+1})$. In our simulations, we considered convex distributions supported on $\{0, \ldots, M\}$ with $\pi_1=\cdots=\pi_{M+1}=1/(M+1)$. Note that mixtures of triangular distributions have to be also decreasing and, as mentioned in the introduction, this implication does not hold under our definition of $2$-monotonicity because we do not require $\nabla^2 p(j)$ to have a non-negative sign for $j > M-2$.
Under the alternative hypothesis, we consider also the truncated binomial distribution denoted by $\mathcal{B}(m,M,r,q)$ supported on $\{m,\dots,M\}$ where $r \in \{m, \ldots, M \}$ is the number of trials and $q \in (0,1)$ is the success probability.
In all the simulations, we fix $\alpha=0.05$, let $n\in\{100,1000\}$, and consider 5000 independent Monte Carlo replications drawn from each fixed model. For each Monte Carlo replication, we draw 5000 samples from the limit distribution of the test statistics used to approximate the quantile of interest.

From both Tables \ref{tab:monotonicity0} and \ref{tab:convexity0}, one can see that Test (i) exhibits a conservative behaviour since the proportion of rejections is smaller than $\alpha$. This behaviour is particularly evident in the case of $\mathcal{P}(0,4,1)$ and $\mathcal{P}(0,9,1)$ when testing for monotonicity. On the other hand, under the same models,  Tests (ii), (iii), and (iv) exhibit a proportion of rejections close to the nominal level for $n=1000$ for both monotonicity and convexity. Such improvement over the conservative test of \cite{GIGUELAY201896} (which is equivalent to Test (i)) is aligned with the insights given in Section \ref{sec:tests}.

As highlighted in \cite{GIGUELAY201896} and Proposition \ref{prop:test-properties}, a test for $k$-monotonicity of the distribution which is $s$-monotone for $s>k$ results in a type I error probability converging to 0. Therefore, this behaviour should be observed in all the considered simulation settings where the distribution satisfies $s$-monotonicity with $s$ larger than the $k$ considered in $H_0^{(k)}$.  The aforementioned behaviour can be seen for Poisson models with intensity parameter chosen such that the model is $k$-monotone with $k$ larger than the one under $H_0^{(k)}$, see Table \ref{tab:Poisson} for the considered parameter values and the corresponding $k$-monotonicity.

A similar behaviour can be noted in Table \ref{tab:convexity0} for the $\mathcal{MT}$ distributions.

Table \ref{tab:monotonicity1} shows that Tests (i) and (ii) achieve a smaller empirical power than Test (iii) under several fixed alternative hypotheses when testing for monotonicity, while Table \ref{tab:convexity1} shows that the performances become comparable when testing for convexity.
Moreover, it is worth noticing that when testing for convexity under some alternatives, Test (iv) achieves a better performance than the other tests, reaching high empirical power already for small sample sizes. For instance, in Table \ref{tab:convexity1} we see that under the Binomial alternatives it reaches 100\% power already for $n=100$. Despite this advantage in terms of power, we point out that Test (iv) is much more computationally intensive and more complex to be implemented.
As a general remark, all the proposed testing procedures are shown to be consistent, i.e.\ the proportion of rejections under fixed alternatives increases to 1 as $n$ increases.

Finally, Table \ref{tab:k_selection_sim} reports the performance of $\widehat{k}_n$, introduced in Section \ref{sec:khat} for selecting the largest monotonicity degree, in terms of Monte Carlo mean and Mean Absolute Error (MAE). We consider a truncated geometric distribution, denoted by $\mathcal{G}(m, M, p)$, for which it is not difficult to see that $k_0 = M-m$. From the reported results, it is immediate to see that $\widehat{k}_n$ tends to underestimate the true monotonicity degree $k_0$, particularly when it is large. This behaviour can be attributed to finite-sample effects: as the support increases, sparsity of the data becomes more pronounced, producing gaps and numerical artifacts that make the procedure incorrectly reject $k$-monotonicity even when it holds.
It is worth noting that choosing a smaller $\alpha$ would reduce this underestimation, but at the cost of a less conservative procedure that is more susceptible to upward fluctuations of the estimator. Indeed, for $\alpha = 0.05$, $\widehat{k}_n$ can be interpreted as a conservative estimator, effectively providing a lower bound for the true value of $k_0$. This property remains valuable in practical modelling contexts, where a lower bound for $k_0$ can meaningfully guide and constrain model selection.

\begin{table}[!ht]
    \centering
    \caption{Percentage of rejections under $H_0^{(1)}$ for monotonicity tests: Empirical evidence shows that the tests are valid while exhibiting conservativeness when the model's $s$-monotonicity degree is larger than $k=1$.}
    \medskip
    \begin{tabular}{lrrrrrrrr}
        \toprule
        & \multicolumn{4}{c}{$n=100$} & \multicolumn{4}{c}{$n=1000$} \\
        \cmidrule(lr){2-5} \cmidrule(lr){6-9}
        {Model $\backslash$ Test} & (i) & (ii) & (iii) & (iv)  & (i) & (ii) & (iii) & (iv)  \\
        \midrule
        $\mathcal{P}(0,4,1)$                             & 3.3 & 4.2 & 4.5 & 4.2 & 3.5 & 5.3 & 5.3 & 5.2 \\
        $\mathcal{P}(0,4,2-\sqrt{2})$                    & 0.0 & 0.0 & 0.0 & 0.0 & 0.0 & 0.0 & 0.0 & 0.0 \\
        $\mathcal{P}(0,9,1)$                             & 3.1 & 4.3 & 4.6 & 4.0 & 3.6 & 4.9 & 4.9 & 5.0 \\
        $\mathcal{P}(0,9,2-\sqrt{2})$                    & 0.0 & 0.0 & 0.0 & 0.0 & 0.0 & 0.0 & 0.0 & 0.0 \\
        $\mathcal{MT}(\frac{1}{5},\dots,\frac{1}{5})$    & 0.0 & 0.0 & 0.1 & 0.1 & 0.0 & 0.0 & 0.0 & 0.0 \\
        $\mathcal{MT}(\frac{1}{10},\dots,\frac{1}{10})$  & 0.0 & 0.1 & 0.1 & 0.0 & 0.0 & 0.0 & 0.0 & 0.0 \\
        \bottomrule
    \end{tabular}
    \label{tab:monotonicity0}
\end{table}

\begin{table}[!ht]
    \centering
    \caption{Percentage of rejections under $H_1^{(1)}$ for monotonicity tests: Empirical evidence shows that the tests are consistent.}
    \medskip
    \begin{tabular}{lrrrrrrrr}
        \toprule
        & \multicolumn{4}{c}{$n=100$} & \multicolumn{4}{c}{$n=1000$} \\
        \cmidrule(lr){2-5} \cmidrule(lr){6-9}
        {Model $\backslash$ Test} & (i) & (ii) & (iii) & (iv)  & (i) & (ii) & (iii) & (iv)  \\
        \midrule
        $\mathcal{P}(0,4,2)$                     & 47.3 & 48.1 & 49.0 & 57.8 & 100.0 & 100.0 & 100.0 & 100.0 \\
        $\mathcal{P}(0,9,2)$                     & 45.2 & 45.4 & 46.7 & 52.6 & 100.0 & 100.0 & 100.0 & 100.0 \\
        $\mathcal{B}(0,4,4,0.5)$                 & 91.5 & 91.9 & 92.4 & 99.8 & 100.0 & 100.0 & 100.0 & 100.0 \\
        $\mathcal{B}(0,9,4,0.5)$                 & 91.7 & 92.1 & 92.6 & 99.9 & 100.0 & 100.0 & 100.0 & 100.0 \\
        \bottomrule
    \end{tabular}
    \label{tab:monotonicity1}
\end{table}

\begin{table}[!ht]
    \centering
    \caption{Percentage of rejections under $H_0^{(2)}$ for convexity tests: Empirical evidence shows that the tests are valid while exhibiting conservativeness when the model's $s$-monotonicity degree is larger than $k=2$.}
    \medskip
    \begin{tabular}{lrrrrrrrr}
        \toprule
        & \multicolumn{4}{c}{$n=100$} & \multicolumn{4}{c}{$n=1000$} \\
        \cmidrule(lr){2-5} \cmidrule(lr){6-9}
        {Model $\backslash$ Test} & (i) & (ii) & (iii) & (iv)  & (i) & (ii) & (iii) & (iv)  \\
        \midrule
        $\mathcal{P}(0,4,2-\sqrt{2})$                    & 4.0 & 4.2 & 4.2 & 4.2 & 4.3 & 4.5 & 4.6 & 4.6 \\
        $\mathcal{P}(0,9,2-\sqrt{2})$                    & 4.1 & 4.3 & 4.3 & 4.2 & 4.2 & 4.4 & 4.4 & 4.4 \\
        $\mathcal{MT}(\frac{1}{5},\dots,\frac{1}{5})$    & 0.1 & 0.6 & 0.6 & 0.8 & 0.0 & 0.0 & 0.0 & 0.3 \\
        $\mathcal{MT}(\frac{1}{10},\dots,\frac{1}{10})$  & 0.9 & 1.1 & 1.4 & 1.1 & 0.2 & 0.2 & 0.5 & 0.2 \\
        \bottomrule
    \end{tabular}
    \label{tab:convexity0}
\end{table}

\begin{table}[!ht]
    \caption{Percentage of rejections under $H_1^{(2)}$ for convexity tests: Empirical evidence shows that the tests are consistent.}
    \medskip
    \centering
    \begin{tabular}{lrrrrrrrr}
        \toprule
        & \multicolumn{4}{c}{$n=100$} & \multicolumn{4}{c}{$n=1000$} \\
        \cmidrule(lr){2-5} \cmidrule(lr){6-9}
        {Model $\backslash$ Test} & (i) & (ii) & (iii) & (iv)  & (i) & (ii) & (iii) & (iv)  \\
        \midrule
        $\mathcal{P}(0,4,2)$                        & 29.1 & 29.1 & 29.1 & 99.8 & 100.0 & 100.0 & 100.0 & 100.0 \\
        $\mathcal{P}(0,4,1)$                        & 31.7 & 31.6 & 31.8 & 33.7 & 99.1 & 99.1 & 99.2 & 99.6 \\
        $\mathcal{P}(0,9,2)$                        & 28.1 & 28.1 & 27.9 & 94.8 & 100.0 & 99.9 & 100.0 & 100.0 \\
        $\mathcal{P}(0,9,1)$                        & 31.9 & 31.8 & 31.9 & 33.3 & 99.3 & 99.3 & 99.2 & 99.6 \\
        $\mathcal{B}(0,4,4,0.5)$                    & 51.6 & 51.7 & 51.5 & 100.0 & 100.0 & 100.0 & 100.0 & 100.0 \\
        $\mathcal{B}(0,9,4,0.5)$                    & 51.3 & 51.5 & 51.3 & 100.0 & 100.0 & 100.0 & 100.0 & 100.0 \\
        \bottomrule
    \end{tabular}
    \label{tab:convexity1}
\end{table}

\begin{table}[!ht]
\centering
\caption{Simulation results for $\widehat k_n$ based on Tests (i), (ii) and (iii). Entries show (mean, MAE) of $\widehat k_n$.}
\medskip
\begin{tabular}{llrrr}
\toprule
Model $\backslash$ Test & $n$ & (i) & (ii) & (iii) \\
\midrule
$\mathcal{G}(0,4,0.1)$   & 100    & (3.7, 0.3) & (3.7, 0.3) & (3.7, 0.3) \\
                          & 1000   & (3.8, 0.2) & (3.8, 0.2) & (3.8, 0.2) \\
$\mathcal{G}(0,9,0.1)$   & 100    & (8.2, 0.8) & (7.9, 1.1) & (8.1, 0.9) \\
                          & 1000   & (8.3, 0.7) & (8.1, 0.9) & (8.3, 0.7) \\
$\mathcal{G}(0,19,0.1)$  & 100    & (16.5, 2.5) & (15.7, 3.3) & (16.4, 2.6) \\
                          & 1000   & (17.0, 2.0) & (16.3, 2.7) & (17.0, 2.0) \\
$\mathcal{G}(0,39,0.1)$  & 100    & (29.8, 9.2) & (27.3, 11.7) & (29.8, 9.2) \\
                          & 1000   & (32.9, 6.1) & (30.4, 8.6) & (32.9, 6.1) \\
$\mathcal{G}(0,4,0.3)$   & 100    & (3.8, 0.2) & (3.8, 0.2) & (3.8, 0.2) \\
                          & 1000   & (3.9, 0.1) & (3.9, 0.1) & (3.9, 0.1) \\
$\mathcal{G}(0,9,0.3)$   & 100    & (8.0, 1.0) & (7.7, 1.3) & (8.0, 1.0) \\
                          & 1000   & (8.5, 0.5) & (8.3, 0.7) & (8.5, 0.5) \\
$\mathcal{G}(0,19,0.3)$  & 100    & (11.9, 7.1) & (11.2, 7.8) & (11.9, 7.1) \\
                          & 1000   & (15.7, 3.3) & (14.8, 4.2) & (15.7, 3.3) \\
$\mathcal{G}(0,39,0.3)$  & 100    & (12.5, 26.5) & (11.7, 27.3) & (12.5, 26.5) \\
                          & 1000   & (18.1, 20.9) & (16.8, 22.2) & (18.1, 20.9) \\
\bottomrule
\end{tabular}
\label{tab:k_selection_sim}
\end{table}

\subsection{Applications to real data}

We consider 5 real datasets to illustrate the applicability of the proposed testing procedures. Some datasets are taken from \cite{chee2016}, who proposed a method to estimate the number of species using discrete $k$-monotone models. Note that the authors also proposed a model selection approach to determine the parameter $k$. It must be pointed out that the definition of $k$-monotonicity used in \cite{chee2016} is quite different from the one adopted in this work or in \cite{GIGUELAY201896}. In fact, according to their definition, for a given value of $k$, a discrete distribution is said to be $k$-monotone if it is a mixture of discretised Beta distributions with parameters $1$ and $k$.

In the following, we report for each dataset a brief description and the cardinality of $\widehat {\mathcal{S}}_n= \{\min\{X_{1:n}\},\max\{X_{1:n}\}\}$:
\begin{itemize}
    \item[-] \lq\lq Accidents\rq\rq: This dataset contains 9461 counts ($|\widehat {\mathcal{S}}_n|=8$) of claims related to issued accident insurance policies. The dataset has been studied by \cite{chee2016}, and their analysis suggests an optimal value $k=9$.
    \item[-] \lq\lq Shakespeare\rq\rq: This dataset is about Shakespeare's vocabulary richness; see \cite{spevack68} for the original reference. It has been studied in \cite{bohning2017}, \cite{chee2016}, \cite{efron1976estimating}, \cite{GIGUELAY201896} and \cite{balabdaoui2020completely}. We use the dataset reported in Table 5 of \cite{chee2016}, which contains 30792 counts ($|\widehat {\mathcal{S}}_n|=100$) of words used by Shakespeare up to 100 occurrences, and for which the authors fitted a discrete $25$-monotone model. On the other hand, the results reported in \cite{GIGUELAY201896} suggest $k=6$. Finally, \cite{balabdaoui2020completely} successfully fitted a completely monotone distribution.
    \item[-] \lq\lq Stamboliyski\rq\rq:  This is a biodosimetry application based on the measurements of the biological response to radiation. In particular, exposure to radiation causes a certain (random) number of chromosome aberrations (generally dicentrics and/or rings) in the cells. The dataset, already used in \cite{puig2020some}, consists of 284 counts ($|\widehat {\mathcal{S}}_n|=8$) of aberrations (dicentrics and rings) from a patient exposed to radiation after the nuclear accident of Stamboliyski (Bulgaria) in 2011.
    \item[-] \lq\lq Tokai-mura\rq\rq: This is again a biodosimetry application from \cite{puig2020some}, and consists of 175 counts ($|\widehat {\mathcal{S}}_n|=10$) of dicentrics from a patient exposed to high doses of radiation caused by the nuclear accident that happened in Tokai-mura (Japan) in 1999.
    \item[-] \lq \lq Microbial\rq\rq:  This datatset contains 174 counts ($|\widehat {\mathcal{S}}_n|=65$) of observed species richness. Both \cite{bunge2008parametric} and \cite{chee2016} have studied it. In the latter paper, the authors report a selected monotonicity degree equal to $k=27$.
\end{itemize}
For all the datasets, we report the obtained test statistics and corresponding p-values in Tables~\ref{tab:data_mon}~and~\ref{tab:data_conv}.
For the Accidents and Shakespeare datasets, all the testing procedures show that there is no empirical evidence against either monotonicity or convexity. The results are in line with the results of \cite{chee2016}, who fit a 9-monotone model and a 25-monotone model, respectively. Furthermore, our results for the Shakespeare dataset are also in line with the value of $k=6$ suggested in \cite{GIGUELAY201896}.
The Stamboliyski dataset shows no evidence against either monotonicity or convexity, but convexity seems to be less likely to hold.
The Tokai-mura dataset shows strong evidence against both monotonicity and convexity.
Finally, the Microbial dataset shows no evidence at all against either monotonicity or convexity, which is also in line with \cite{chee2016} where a 27-monotone model was fitted.

Table \ref{tab:k_selection} reports the values of $\widehat k_n$ introduced in \eqref{eq:khat} for all five datasets and Tests (i), (ii) and (iii). It must be pointed out that $\widehat k_n$ reaches its possible maximum value $X_{(n)}-X_{(1)}$ for the datasets Accidents, Stamboliyski and Microbial, suggesting a high degree of $k$-monotonicity. On the other hand, the Words dataset satisfies a high degree of $k$-monotonicity, but it does not reach its maximum possible value of 99. Finally, $\widehat k _n=0 $ for the dataset Tokai-mura matching the results of Tables~\ref{tab:data_mon}~and~\ref{tab:data_conv}.

\begin{table}[!ht]
    \centering
    \caption{Monotonicity test statistics and the corresponding p-values (in parenthesis) for the 5 considered datasets.}
    \medskip
    \begin{tabular}{lrrrrr}
        \toprule
    Test $\backslash$ Dataset    &  Accidents & Shakespeare & Stamboliyski & Tokai-mura &  Microbial \\
        \midrule
        (i)                            & 0 (1.00) & -0.11 (1.00) & -0.06 (1.00) & -2.49 (0) & 0 (1) \\
        (ii)                           & 0 (1.00) & -0.11 (1.00) & -0.06 (1.00) & -2.49 (0) & 0 (1) \\
        (iii)                          & 0 (0.96) & -0.11 (0.99) & -0.06 (0.86) & -2.49 (0) & 0 (1) \\
        (iv)                           & 0 (0.96) &  0.19 (1.00) &  0.00 (0.96) &  2.11 (0) & 0 (1) \\
        \bottomrule
    \end{tabular}
    \label{tab:data_mon}
\end{table}

\begin{table}[!ht]
    \centering
    \caption{Convexity test statistics and the corresponding p-values (in parenthesis) for the 5 considered datasets.}
    \medskip
    \begin{tabular}{lrrrrr}
        \toprule
    Test $\backslash$ Dataset    &  Accidents & Shakespeare & Stamboliyski & Tokai-mura &  Microbial \\
        \midrule
        (i)                            & -0.03 (1.00) & -0.26 (0.99) & -1.19 (0.35) & -3.25 (0) & -0.15 (1) \\
        (ii)                           & -0.03 (0.98) & -0.26 (0.99) & -1.19 (0.35) & -3.25 (0) & -0.15 (1) \\
        (iii)                          & -0.03 (0.78) & -0.26 (0.97) & -1.19 (0.35) & -3.25 (0) & -0.15 (1) \\
        (iv)                           &  0.01 (0.83) &  0.31 (1.00) &  0.53 (0.53) &  2.78 (0) &  0.30 (1) \\
        \bottomrule
    \end{tabular}
    \label{tab:data_conv}
\end{table}

\begin{table}[!ht]
    \centering
    \caption{Values of $\widehat k_n$ for the 5 considered datasets.}
    \medskip
    \begin{tabular}{lrrrrr}
        \toprule
        Test $\backslash$ Dataset & Accidents & Words & Stamboliyski & Tokai-mura & Microbial \\
        \midrule
        (i)   & 7  & 90 & 7 & 0 & 64 \\
        (ii)  & 7  & 81 & 7 & 0 & 64 \\
        (iii) & 7  & 90 & 7 & 0 & 64 \\
        \bottomrule
    \end{tabular}
    \label{tab:k_selection}
\end{table}

\section{Proofs}
\label{sec:proofs}

\subsection*{Lemmas}
We first provide the following useful lemmas:
\begin{lemma}\label{lem:convquant}
Let $\mathcal S$ be a finite subset of  $\mathbb Z$.  Also, let  $\Sigma$ be a positive definite matrix $\in \mathbb R^{d \times d} $ and $\widetilde{\Sigma}_n$ a random and symmetric matrix $\in \mathbb R^{d \times d} $ based on i.i.d. random variables $W_1, \ldots, W_n$ or a triangular array $W_{n1}, \ldots, W_{nn}$  such that $\widetilde{\Sigma}_n \overset{P}{\to} \Sigma$. For any subset $I \subseteq \mathcal S$ and $\alpha \in (0,1)$, define $\widetilde t_{\alpha, n}(I)$ and  $t_\alpha(I)$ to be the $\alpha$-quantiles of the distribution of $g (\widetilde Z_n)$ and $g( Z)$ respectively,  where $g$ is continuous, $\widetilde Z_n := (\widetilde Z_{n,j})_{j \in \mathcal S} \sim \mathcal N(0, \widetilde \Sigma_{n})$  and $Z := (Z_j)_{j \in \mathcal S} \sim \mathcal N(0,\Sigma)$. Then, 
\begin{align*}
\widetilde t_{\alpha, n}(I)  \overset{P}{\to}  t_\alpha(I).
\end{align*}
\end{lemma}
\begin{proof}[Proof of Lemma \ref{lem:convquant}]  We can find a random vector $Y= (Y_j)_{j \in \mathcal S} \sim \mathcal N(0,I_{d\times d})$ which is independent of  $W_1, \ldots, W_n$ or $W_{n1}, \ldots, W_{nn}$ such that 
\begin{align*}
Z  \stackrel{d}{=}  \Sigma^{1/2}  Y, \ \ \text{and} \ \  \widetilde{Z}_n  \stackrel{d}{=}  \widetilde{\Sigma}^{1/2}_n  Y. 
\end{align*}
Using the well-known fact that the operator $M \mapsto M^{1/2}$ is continuous on the space of positive definite matrices and the assumption that $ \widetilde{\Sigma}_n \overset{P}{\to}  \Sigma$, it follows that  $\widetilde{\Sigma}^{1/2}_n  Y \overset{P}{\to} \Sigma^{1/2} Y$.
By continuity of $g$ it follows that
$$
g( \widetilde{\Sigma}^{1/2}_n  Y  )\overset{P}{\to}g( \Sigma^{1/2} Y )
$$
and therefore
$$
g (\widetilde Z_n) \overset{d}{\to} g  (Z).
$$
Let $\widetilde{F}_n$ and $F$ denote the distribution functions of $g (\widetilde Z_n)$ and $g  (Z)$ respectively. Since $F$ is continuous, convergence of $\widetilde F_n$ to $F$ must be uniform; see e.g. \cite[Chapter 1, Exercise 6]{ferguson2017}.  Since $F$ is invertible and $\widetilde{F}_n$ invertible with probability tending to 1 as $n \to \infty$, it follows that $\widetilde F_n^{-1}(\alpha) \overset{P}{\to} F^{-1}(\alpha) $, which is equivalent to the statement of the lemma.
\end{proof}

We will use the following result about $\ell_2$ projections of p.m.f.s.
\begin{lemma}\label{lemma:proj}
Let $\mathcal D$  denote either the set of all monotone non-increasing p.m.f.s or the set of all convex p.m.f.s with support $\mathcal S = \{m, \ldots, M \}$. For a given p.m.f.\ $p$ supported on $\mathcal S $, consider the minimisation problem of the functional $q \mapsto  \Vert  p - q \Vert_2$:
\begin{align}
\label{eq:min}
\min_{q\in \mathcal{D}} \Vert  p - q \Vert_2. 
\end{align}
Then:
\begin{enumerate}
\item The problem in \eqref{eq:min} admits a unique solution $ p^{\mathcal D}$.
\item Let $\widehat p^{\mathcal D}_n $ be the unique minimizer of $q \mapsto  \Vert   \widehat p_n -q \Vert_2$,  it holds that
$$\Vert \widehat p^{\mathcal D}_n -  p^{\mathcal D} \Vert_s  = O_{P}(1/\sqrt n)$$
for any $s \in [2, \infty]$.  
\end{enumerate}
\end{lemma}

\begin{proof}[Proof of Lemma \ref{lemma:proj}]
1. Let us equip the space $\mathcal D$ with the dot product.  Then, $\mathcal D$ is a convex and closed subset of  $\mathbb R^{M-m +1}$. Convexity is obvious. To show closedness, take a sequence $(q_m)_m \in \mathcal D$ which converges to $q^*$ in the $\ell_2$-norm. Then, for $j \in \mathcal S$, it holds  
$$\vert q_m(j) - q^*(j) \vert \le \Vert q_m - q^*\Vert_2,$$
and hence $\lim_{m \to \infty} q_m(j) =  q^*(j)$. This implies  that $q^*(j) \in [0,1]$ for all $j \in \mathcal S$ and $\sum_{j \in \mathcal S} q^*(j) = 1$. For $k \in \{1,2 \}$, we have that
$$
0 \le \lim_{m \to \infty} \nabla^k q_m(j)  = \nabla^k q^*(j)
$$
for all $j \in \mathcal S_k$, hence, $q^* \in \mathcal D$. Therefore, owing to the Projection Theorem on Hilbert spaces, it follows that a minimiser of $q \mapsto \Vert p - q \Vert_2 $ over $\mathcal D$ exists and is unique.  

\medskip 

\noindent 2. By the contraction property of projections and the Central Limit Theorem, it holds that
\begin{align*}
\Vert \widehat p^{\mathcal D}_n -  p^{\mathcal D}  \Vert_2 \le  \Vert \widehat p_n -  p \Vert_2 = O_P(1/\sqrt n).
\end{align*}
Recalling that  $\Vert w \Vert_s \le \Vert w \Vert_2$ for any $s \in [2, \infty]$ and sequence $w = (w(j))_j$ such that $ \Vert w \Vert_2 < \infty$, it follows that 
$$
\Vert \widehat p^{\mathcal D}_n -  p^{\mathcal D}  \Vert_s \le \Vert \widehat p^{\mathcal D}_n -  p^{\mathcal D}  \Vert_2 = O_P(1/\sqrt n),
$$
which concludes the proof.
\end{proof}

\subsection*{Proof of Theorem \ref{thm:main}}
\begin{proof}[Proof of Theorem \ref{thm:main}]
Note that the Strong Law of Large Numbers implies that, for $n$ large enough, $ \widehat{\mathcal  {S}}_{k,n} = \mathcal{S}_k$ with probability 1. Thus, without loss of generality and for simplicity of exposition, we replace $\widehat{\mathcal  {S}}_{k,n}$ by $\mathcal{S}_k$. As stated in the theorem, let $I_{\rho_k}$ be the set of indices $j$ such that 
$$
\nabla^k p(j) = \rho_k = \min_{j \in \mathcal{S}_k} \nabla^k p(j).
$$
Note that for all $j \notin I_{\rho_k}$ it holds that $\nabla^k p(j) >  \rho_k$.  By the Law of Large Numbers, we have
$$\nabla^k \widehat{p}_n(j)\overset{P}{\to} \nabla^k p(j)$$ 
for all $j \in \mathcal{S}_k$. Thus, it follows that for all $i \notin  I_{\rho_k} $  and $j \in I_{\rho_k}$ it holds 
$$P(\nabla^k \widehat{p}_n(i) >  \nabla^k \widehat{p}_n(j))\to 1 ,$$
as $n\to\infty$.
 Thus, with probability tending to 1, it holds
\begin{align*}
\sqrt n  (\min_{j \in \mathcal{S}_k}  \nabla^k \widehat p_n(j)  -  \min_{j \in \mathcal{S}_k}  \nabla^k p(j))  & =      \sqrt n  (\min_{j \in I_{\rho_k}}  \nabla^k \widehat p_n(j)  - \rho_k)  \\
& =   \min_{j \in I_{\rho_k}}    \sqrt n  (\nabla^k \widehat p_n(j)  - \rho_k)    \\
& =   \min_{j \in I_{\rho_k}}    \sqrt n  (\nabla^k \widehat p_n(j)  - \nabla^k p(j))  ,
\end{align*}
where we used that $\rho_k$ is the common value of $\nabla^k p(j)$ for all $j \in I_{\rho_k}$. Owing to the multivariate Central Limit Theorem applied to the random vector $ \sqrt n  (\nabla^k \widehat p_n(j)  - \nabla^k p(j))_{j \in \mathcal{S}_k}$ and using the Continuous Mapping Theorem, it follows that 
\begin{align*}
\min_{j \in I_{\rho_k}}  \sqrt n  (\nabla^k \widehat p_n(j)  - \nabla^k p(j))   \overset{d}{\rightarrow} \min_{j \in I_{\rho_k}}  Z_j,
 \end{align*}   
where $(Z_j)_{j \in \mathcal{S}_k} \sim \mathcal{N}({0}, \Sigma_k)$ and $\Sigma_k$ is the covariance matrix defined in the statement of the theorem. \end{proof}

\begin{proof}[Proof of Theorem \ref{thm:main} via the Generalized Delta Method]
 For some integer $s \ge 1$, let us consider the ordering function $\Gamma:\mathbb{R}^s\to\mathbb{R}^s$ given by
  \begin{align*}
  \Gamma(x_1,\ldots, x_s)  = (x_{(s)},\ldots, x_{(1)})
  \end{align*}
  where $\min_{1 \le i \le s} x_i =  x_{(1)} \le \ldots  \le  x_{(s)}  = \max_{1 \le i \le s} x_i  $.   Since we are interested in computing the quasi-differential of the function
  $$
  (x_1, \ldots, x_s)  \mapsto  x_{(1)}= \min_{1 \le i \le s} x_i
  $$
  we will consider the function $h: \mathbb{R}^s \to \mathbb{R}$ as
  $$(x_1\dots,x_s) \mapsto h(x_1,\ldots, x_s)  =  x_s.$$ 
  Let $A \subset \mathbb R^s$ and fix $\theta$ an accumulation point in $A$, then, the function $h$ is $A$-differentiable at $\theta$ (see Definition 1 of \citealp{marcheselli2000generalized}) since it is differentiable in the usual sense, and we denote by $H_\theta$ its $A$-differential at $\theta$.
  Now, let $v_1  > \cdots > v_r$ be the distinct values taken by $\theta_1,\ldots, \theta_s$, and for $i=1, \ldots, r$,  let $J_i  = \{j \in \{1, \ldots, s\}:  \theta_j =  v_i \}$ with cardinality $ | J_i |$.  For a given vector $x=(x_1, \ldots, x_s)  \in \mathbb R^s$ and for $i \in \{1, \ldots, r\}$ we denote by $x_{J_i}$ the sub-vector of $x$ whose components are indexed by $J_i$. For every $i=1,\dots,r$, let us define the function $\Gamma_i: \mathbb{R}^s \to \mathbb{R}^{|J_i|}$ as
\begin{align*}
x \mapsto \widetilde{\Gamma}_i(x)  :=  \Gamma(x_{J_i}), 
\end{align*}
 that is  $\widetilde{\Gamma}_i(x)$ is the vector of components of $x_{J_i}$ arranged in decreasing order. If $\widetilde{\Gamma}:\mathbb{R}^s \to \mathbb{R}^s$ denotes the function 
 $$
x \mapsto  \widetilde{\Gamma}(x):= (\widetilde{\Gamma}_1(x), \ldots, \widetilde{\Gamma}_r(x))  \in \mathbb R^s,$$
 then Theorem 2 of \cite{marcheselli2000generalized} implies that the (regular) quasi-differential of the function $(x_1, \ldots, x_s)  \mapsto  x_{(1)}$ at $\theta$  is given by
 \begin{align}
 \label{eq:tau}
  \begin{split}
\tau(x)  &= (H_{\Gamma(\theta)}\circ \widetilde \Gamma)(x) \\
&= \big\langle  \nabla h (\Gamma (\theta )),   \widetilde\Gamma(x) \big\rangle \\
&= \big\langle  (0,\ldots, 0, 1),   (\widetilde{\Gamma}_1(x), \ldots, \widetilde{\Gamma}_r(x))  \big\rangle  \\ 
&=  \min_{l \in J_r} x_l.
\end{split}
 \end{align}
In other words $\tau(x)$ gives the very last component of $\widetilde{\Gamma}(x)$ which is equal to the last component of  $\widetilde{\Gamma}_r(x)  =  \Gamma(x_{J_r})$, that is  $\min_{l \in J_r} x_l$.  Note that $J_r$  is the set of indices where the minimal value in $\{\theta_1, \ldots, \theta_s\}$ is attained.

Since $ \widehat{\mathcal  {S}}_{k,n} \to \mathcal{S}_k$ almost surely, without loss of generality and for simplicity of exposition, we replace $\widehat{\mathcal  {S}}_{k,n}$ by $\mathcal{S}_k$.
Let $   (Z_j)_{j  \in \mathcal{S}_k}\sim \mathcal{N}(0,\Sigma_k)$ where $\Sigma_k$ is the covariance matrix defined in the statement of the theorem, the classical multivariate Central Limit Theorem implies that 
$$\sqrt n  \big( \nabla^k \widehat p_n(j)  - \nabla^k p(j) \big)_{j\in \mathcal{S}_k}  \overset{d}{\to}   (Z_j)_{j  \in \mathcal{S}_k}. $$
Therefore, using the expression derived in \eqref{eq:tau} and the Generalized Delta Method introduced in Theorem 1 of \cite{marcheselli2000generalized} (with $a_n = \sqrt n$ and $C = \emptyset$) it follows that 
\begin{align*}
T_{k,n}=\sqrt n  (\min_{j \in \mathcal{S}_k} \nabla^k \widehat p_n(j)  - \rho_k )  \overset{d}{\to} \tau ( (Z_j)_{j  \in \mathcal{S}_k}) =  \min_{ j \in I_{\rho_k}}  Z_j,
\end{align*}
where $I_{\rho_k}$ is the set of indices in $\mathcal{S}_k$ such that $  \min_{j \in \mathcal{S}_k} \nabla^k p(j)= \rho_k $ is attained, and the proof is concluded. 
\end{proof}

\subsection*{Proof of Proposition \ref{prop:test-properties}}
\begin{proof}[Proof of Proposition \ref{prop:test-properties}]
1. First, recall that if $\{Y_n\}_n$ is a sequence of random variables which admits a weak limit, then $Y_n = O_{P}(1)$. In particular, for any deterministic sequence $(a_n)_n$ such that $\lim_{n \to \infty} a_n = \infty$ it holds that $ \lim_{n \to \infty} P(Y_n < -a_n) =0$ and $ \lim_{n \to \infty} P(Y_n < a_n) =1$.   Now, note that by Theorem \ref{thm:main}
$$
\widehat{T}_{k, n} = T_{k, n}  + \sqrt n \rho_{k}=O_{P}(1)+ \sqrt n \rho_{k},
$$
as $n\to\infty$.

If $\rho_k =0$, then $t_\alpha(I_0) = F^{-1}_{W_{I_0}}(\alpha)$ and $W_{I_0}$ is the weak limit of $T_{k, n}$. Since the minimum of 
independent continuous r.v.s is continuous, by Lemma \ref{lem:convquant} it follows that 
$$\lim_{n \to \infty}  P( \widehat{T}_{k, n} < \widehat t_{\alpha,n}(I_0))  =  P( W_{I_0} \leq t_\alpha(I_0))  = \alpha.$$ 
\medskip
\noindent
2. If $\rho_k > 0$, then, $I_0 = \emptyset$ and hence  $\widehat t_{\alpha,n}(I_0) = 0$ by definition. It follows that 
\begin{align*}
\lim_{n\to\infty}P(\widehat T_{k, n} < 0) =  \lim_{n\to\infty}P(T_{k, n}  < - \sqrt n  \rho_{k} )  = 0.
\end{align*}
\medskip
\noindent
3. If $p$ satisfies $H_1^{(k)}$, then $\rho_k < 0$.  Hence, for any integer $1 \le r \le \vert \mathcal H_k \vert$ and distinct values $h_1, \ldots, h_r \in \mathcal H_k $ we have that
\begin{align*}
\lim_{n\to\infty}P(\widehat T_{k, n} < \widehat t_{\alpha,n}( \cup_{j=1}^r I_{h_j}))  =   \lim_{n\to\infty}  P(T_{k, n}  <  \widehat t_{\alpha,n}(\cup_{j=1}^r I_{h_j}) + \sqrt n \vert \rho_{k} \vert ) = 1,
\end{align*}
which concludes the proof.
\end{proof}

\subsection*{Proof of Theorem \ref{thm:selection}}
\begin{proof}[Proof of Theorem \ref{thm:selection}]
In the proof of 1-3, we will use the Berry-Esseen Theorem, which gives a quantitative upper bound to the error in the Central Limit Theorem approximation. The statement of the theorem is as follows, see e.g.\ \cite{klenke2020probability}:

Let $Z_1, \ldots, Z_n$ be i.i.d.\ r.v.s with mean $\mu$ and variance $\sigma^2 > 0$, and absolute third centred moment $m = E[\vert Z_1 - \mu \vert^3] < \infty$, then
\begin{align}
\label{eq:berry-esseen}
 \sup_{x\in\mathbb{R}}| F_n(x)  - \Phi(x) | \leq   \frac{C m}{\sqrt n \sigma^{3}},   
\end{align}
where  $F_n(x)  =  P(\sqrt n (\Bar Z_n - \mu)/\sigma \le x)$, $\Phi$ the distribution function of the standard normal distribution and $C > 0$ is a universal constant. 
In the sequel, we will assume without generality that $m=1$ and hence $\mathcal S_k = \{1, \ldots, M- k \}$ where $M \ge k +1$.
\medskip

\noindent 1. 
Let us denote by $\widetilde{I}_{0,n}$ the set where we only replace $\widehat{\mathcal{S}}_{k,n}$   by $\mathcal{S}_k$ used in the definition of $\widehat{I}_{0,n}$. Since $P( \mathcal S_k^c \cap  \widehat{\mathcal{S}}_{k,n}\neq \emptyset)  =0 $, we have that 
\begin{align*}
P(\widehat I_{0,n} \ne \widetilde I_{0,n})  &\le    P( \widehat{\mathcal{S}}_{k,n}  \ne \mathcal S_k)  = P(\widehat{\mathcal{S}}_{k,n} \ \triangle \ \mathcal S_k\ne \emptyset)=  P( \mathcal S_k  \cap \widehat{\mathcal{S}}_{k,n}^c\neq \emptyset).
\end{align*}
Now, note that by the union bound, it follows
\begin{align*}
 P( \mathcal S_k  \cap \widehat{\mathcal{S}}_{k,n}^c\neq \emptyset) & = 
 P\big(\exists \ j \in \mathcal{S}_k: \widehat p_n(j) =0 \big)  \\
 &\le  \sum_{j \in  \mathcal{S}_k}  P(\widehat p_n(j) =0 )  \\
 &=   \sum_{j \in  \mathcal{S}_k}  P(\sqrt n (\widehat p_n(j) -  p(j)) = - \sqrt n p(j) )  \\
 & \le   \sum_{j \in  \mathcal{S}_k}  P\Big(\frac{\sqrt n (\widehat p_n(j) -  p(j))}{\sqrt{p(j) (1-p(j))}} \le - \frac{\sqrt n p(j)}{2 \sqrt{p(j) (1-p(j))}} \Big).
\end{align*}    
Let $F_{n,j}$ denote the distribution function of $\sqrt n (\widehat p_n(j) -  p(j))/\sqrt{p(j)(1-p(j))}$, then
\begin{align*}
 & \sum_{j \in  \mathcal{S}_k}  P\Big(\frac{\sqrt n (\widehat p_n(j) -  p(j))}{\sqrt{p(j) (1-p(j))}} \le - \frac{\sqrt n p(j)}{2 \sqrt{p(j) (1-p(j))}} \Big) \\
 & =   \sum_{j \in  \mathcal{S}_k} F_{n,j}\Big(- \frac{\sqrt n p(j)}{2 \sqrt{p(j) (1-p(j))}}\Big) \\
 & \le  \sum_{j \in  \mathcal{S}_k} \frac{C m_j}{\sqrt n (p(j)(1-p(j)))^{3/2}}  +   \sum_{j \in  \mathcal{S}_k} \Phi\Big(- \frac{\sqrt n p(j)}{2 \sqrt{p(j) (1-p(j))}}\Big), 
\end{align*}
where  we used \eqref{eq:berry-esseen} with $m_j =  \E[\vert \mathbf{1}_{\{X_1 = j\}} - p(j) \vert^3]$ and $C>0$. We conclude that 
 \begin{align*}
P(\widehat I_{0,n} \ne \widetilde I_{0,n})  & \le    \frac{C}{\sqrt n} \sum_{j \in  \mathcal{S}_k} \frac{m_j}{(p(j)(1-p(j)))^{3/2}}  +   \sum_{j \in  \mathcal{S}_k} \Big(1-\Phi\Big(\frac{\sqrt n p(j)}{2 \sqrt{p(j) (1-p(j))}}\Big) \Big) \\
& \le    \frac{C}{\sqrt n} \sum_{j \in  \mathcal{S}_k} \frac{m_j}{(p(j)(1-p(j)))^{3/2}} + \frac{4}{\sqrt n} \sum_{j \in  \mathcal{S}_k} \sqrt{\frac{1-p(j)}{p(j)}} \varphi\Big(\frac{\sqrt n p(j)}{2 \sqrt{p(j) (1-p(j))}}\Big) 
\end{align*}
for $n$ large enough, where we used the asymptotic behaviour of the Mill's ratio for the standard Gaussian; i.e., 
\begin{align}\label{eq:millsratio}
\frac{1-\Phi(x)}{\varphi(x)} \sim \frac{1}{x}, \ \ \text{ $x \to \infty$}
\end{align}
where $\varphi$ denotes the density of the standard Gaussian; see \cite{small2010expansions}. Since the term  
$$\varphi\Big(\frac{\sqrt n p(j)}{2 \sqrt{p(j) (1-p(j)}}\Big) = o(1)$$ 
and $\mathcal S_k$ is finite, it follows that 
$$
P(\widehat I_{0,n} \ne \widetilde I_{0,n})  = O(1/\sqrt n).
$$
Therefore, $P(\widehat{I}_{0,n}  \ne I_{0} )$ can be bounded as 
\begin{align*}
P(\widehat{I}_{0,n}  \ne I_{0} )  &=   P(  \widehat{I}_{0,n}  \ne I_0 \ \text{and} \  \widehat{I}_{0,n} = \widetilde{I}_{0,n} ) +    P(  \widehat{I}_{0,n}  \ne I_0 \ \text{and}  \  \widehat{I}_{0,n} \ne \widetilde{I}_{0,n} )  \\
& \le  P(  \widetilde{I}_{0,n}  \ne I_0)   +  P(\widehat{I}_{0,n} \ne \widetilde{I}_{0,n} )  \\
&  \le  P(  \widetilde{I}_{0,n}  \ne I_0)  +  O(1/\sqrt n).
\end{align*}
To ease notation let $\sigma^2_j:= (\Sigma_k)_{j,j}$ and $\widehat \sigma^2_{j,n}$ its consistent empirical estimator. It follows
\begin{align}
\begin{split}
\label{eq:probabilities}
P(  \widetilde{I}_{0,n}  \ne I_0)  &=   P( \exists \  j \in \mathcal{S}_k:  j \in   \widetilde{I}_{0,n} \cap I_0^c)  +  P( \exists \  j \in \mathcal{S}_k:  j \in   \widetilde{I}^c_{0,n} \cap I_0) \\
& =   P\Big( \exists \ j \in \mathcal{S}_k:   \frac{\sqrt{n}\nabla^k \widehat p_n(j)}{ \widehat \sigma_{j,n} }  \le z_{1-1/n} \  \text{and}  \  \nabla^k p(j) >  0 \Big) \\
&  \quad+   P\Big( \exists \ j  \in \mathcal{S}_k:   \frac{\sqrt{n} \nabla^k \widehat p_n(j)} { \widehat \sigma_{j,n} } > z_{1-1/n} \ \text{and}   \  \nabla^k p(j) =  0 \Big)
\end{split}
\end{align}
where in the first probability term we use the fact that $p$ is $k$-monotone, which implies that $\nabla^k p(j) \ge 0$  for all $j  \in \mathcal{S}_k$, thus, if $\nabla^k p(j) \ne 0$, then $ \nabla^k p(j) > 0$. Using the union bound and recalling that $\sigma^2_j$ is the asymptotic variance of $\sqrt{n} (\nabla^k \widehat p_n(j)  - \nabla^k p(j)) $, for $n$ large enough the first term of \eqref{eq:probabilities} can be bounded as
\begin{align}
\label{eq:first-prob-bound}
\begin{split}
 &P\Big( \exists \ j \in I^c_0:  \frac{\sqrt{n} \nabla^k \widehat p_n(j)}{\widehat \sigma_{j,n} }  \le z_{1-1/n}  \Big)\\
 &  \le \sum_{j \in I_0^c}    P\Big(   \frac{\sqrt{n} (\nabla^k \widehat p_n(j)  - \nabla^k p(j))}{ \widehat \sigma_{j,n} }  \le z_{1-1/n}  -  \frac{\sqrt n\nabla^k p(j) }{\widehat \sigma_{j,n}}  \Big)  \\
 &  =  \sum_{j \in I^c_0}    P\Big(   \frac{\sqrt{n}(\nabla^k \widehat p_n(j)  - \nabla^k p(j))}{ \sigma_j }  \le \frac{{\widehat \sigma_{j,n}}}{\sigma_j} \Big(z_{1-1/n}  -  \frac{\sqrt n\nabla^k p(j) }{\widehat \sigma_{j,n}} \Big)  \Big) \\
 & \le   \sum_{j \in I^c_0}    P\Big(   \frac{\sqrt{n}(\nabla^k \widehat p_n(j)  - \nabla^k p(j))}{ \sigma_j }  \le \frac{{\widehat \sigma_{j,n}}}{\sigma_j} \Big(z_{1-1/n}  -  \frac{\sqrt n\nabla^k p(j) }{\widehat \sigma_{j,n}} \Big) \ \ \text{and}  \ \ \widehat{\sigma}_{j, n} \le 2 \sigma_j  \Big)  \\
&  \  \  + \sum_{j \in I^c_0}   P(\widehat \sigma_{j,n}  > 2 \sigma_j   ) \\
 & \le  \sum_{j \in I^c_0}    P\Big(   \frac{\sqrt{n}(\nabla^k \widehat p_n(j)  - \nabla^k p(j))}{\sigma_j }  \le 2 z_{1-1/n}  -  \frac{\sqrt n\nabla^k p(j) }{\sigma_j}  \Big)  +  \sum_{j \in I^c_0}   P(\widehat \sigma_{j,n}  > 2 \sigma_j   ).
 \end{split}
\end{align}
Given $j \in I^c_0$, let us denote 
$$Z_{i,j}  = \nabla^k \mathbf{1}_{\{X_i = j\}}, \ \ \text{for} \ \ i=1, \ldots, n,$$
and let us consider  
$$\bar Z_j  = \frac{1}{n}\sum_{i=1}^n Z_{i,j}=  \nabla^k \widehat p_n(j) \quad \text{and} \quad \mu_j =  \nabla^k p(j) = \mathbb E[\nabla^k \mathbf{1}_{X_1 = j}].$$
Then, $\widehat{\sigma}_{j, n}^2$ can be rewritten as
\begin{align*}
\widehat{\sigma}_{j, n}^2 & =  \frac{1}{n} \sum_{i=1}^n \left(  Z_{i,j}  -\bar Z_j \right)^2  =   \frac{1}{n} \sum_{i=1}^n (  Z_{i,j}  - \mu_j )^2  - (\bar Z_j  - \mu_j)^2.
\end{align*}
As a consequence, for any $j \in I ^c_0$ and $n$ large enough, we have that
\begin{align*}
P(\widehat \sigma_{j,n}  > 2 \sigma_j    ) & =   P(\widehat \sigma^2_{j,n}  > 4 \sigma^2_j  )  \\
&=  P(\sqrt n (\widehat{\sigma}^2_{j,n} - \sigma^2_{j})  >  3  \sigma^2_{j} \sqrt n ) \\
& =  P \Big(\sqrt n \Big(\frac{1}{n} \sum_{i=1}^n ( Z_{i,j}  - \mu_j)^2 - \sigma^2_j  \Big) - \sqrt n (\bar Z_j  - \mu_j)^2  >  3  \sigma^2_{j} \sqrt n \Big)
\\
& \le P \Big(\sqrt n \Big(\frac{1}{n} \sum_{i=1}^n ( Z_{i,j}  - \mu_j)^2 - \sigma^2_j  \Big) > 3  \sigma^2_{j} \sqrt n  \Big) \\
& = |P(\sqrt n (\widehat{\sigma}^2_{j,n} - \sigma^2_{j})  > 3  \sigma^2_{j} \sqrt n/2 ) - (1-\Phi(3\sigma^2_j \sqrt{n}/2)) |+ 1-\Phi(3\sigma^2_j\sqrt{n} /2) \\
&\leq O(1/\sqrt{n})  +  \frac{ 2 \varphi(3\sigma^2_j\sqrt{n}/2)} {3\sigma^2_j\sqrt{n}/2} \\
& = O(1/\sqrt{n}),
\end{align*}
where we used \eqref{eq:berry-esseen} and \eqref{eq:millsratio}.
Thus, $\sum_{j \in I^c_0}   P(\widehat \sigma_{j,n}  > 2 \sigma_j   ) = O(1/\sqrt{n})$.
Now, we turn to the first probability term; i.e.\ we aim to show that
$$
 \sum_{j \in I^c_0}    P\Big(   \frac{\sqrt{n}(\nabla^k \widehat p_n(j)  - \nabla^k p(j))}{\sigma_j }  \le 2 z_{1-1/n}  -  \frac{\sqrt n\nabla^k p(j) }{\sigma_j}  \Big)  = O(1/\sqrt n).
$$
First, let us write $z_{1-1/n}=b_n$. Then, using \eqref{eq:millsratio} for $x = b_n$ it follows $1/n\sim\varphi(b_n)/b_n$, or equivalently $\log n \sim b_{n}^2/2$
and hence
\begin{align}
\label{eq:quantile-bound}
  z_{1-1/n} \sim \sqrt{2 \log n}.
\end{align}
For $n \to \infty$. Note that \eqref{eq:quantile-bound} implies that for $n$ large enough $z_{1-1/n} \le 2 \sqrt{2 \log n}$. Combining this with \eqref{eq:berry-esseen} and \eqref{eq:millsratio} yields for $n$ large enough
\begin{align*}
     & \sum_{j \in I^c_0}    P\Big(   \frac{\sqrt{n}(\nabla^k \widehat p_n(j)  - \nabla^k p(j))}{\sigma_j }  \le 2 z_{1-1/n}  -  \frac{\sqrt n\nabla^k p(j) }{\sigma_j}  \Big)  \\
    & \le O(1/\sqrt{n}) + \sum_{j \in  I_0^c} \Phi\Big(  2 z_{1-1/n}  -  \frac{\sqrt n\nabla^k p(j) }{\sigma_j}  \Big)\\
    &\le O(1/\sqrt{n}) + \sum_{j \in  I_0^c} \Phi\Big(  4 \sqrt{2 \log n}  -  \frac{\sqrt n\nabla^k p(j) }{\sigma_j}  \Big) \\
    &= O(1/\sqrt{n}),
\end{align*}
where in the last step we used the fact that for any $j \in I_0^c $ and for $n$ large enough
\begin{align*}
\Phi\Big(  4 \sqrt{2 \log n}  -  \frac{\sqrt n\nabla^k p(j) }{\sigma_j}  \Big) &=   1 -  \Phi\Big( \frac{\sqrt n\nabla^k p(j) }{\sigma_j} -4 \sqrt{2 \log n}   \Big) \\
& \le   \frac{2 \varphi\Big( \frac{\sqrt n\nabla^k p(j) }{\sigma_j} -4 \sqrt{2 \log n}   \Big) }{\frac{\sqrt n\nabla^k p(j) }{\sigma_j} -4 \sqrt{2 \log n}  }  =  o(1/\sqrt n).
\end{align*}
We conclude that the first probability term in \eqref{eq:probabilities} is $O(1/\sqrt n)$.  Now, we handle the second probability term of \eqref{eq:probabilities}. Let $\eta \in (0,1)$ to be chosen later, for $n$ large we have that
\begin{align*}
&P\Big( \exists \ j  \in \mathcal{S}_k:   \frac{\sqrt{n} \nabla^k \widehat p_n(j)} { \widehat \sigma_{j,n} } > z_{1-1/n} \ \text{and}   \  \nabla^k p(j) =  0 \Big)  \\
& \le \sum_{j \in I_0}  P\Big( \frac{ \sqrt{n} \nabla^k \widehat p_n(j)} { \sigma_j } > z_{1-1/n}\frac{\widehat\sigma_{j,n}}{\sigma_j}\Big)  \\
&  \le \sum_{j \in I_0}  P\Big( \frac{ \sqrt{n} \nabla^k \widehat p_n(j)} { \sigma_j } > \eta z_{1-1/n} \Big)  +  P(\widehat \sigma_{j,n} < \eta \sigma_j ) \\
& \le   \sum_{j \in I_0}  \frac{C m_j}{\sigma_j^3\sqrt n}  +   |\mathcal{S}_k | (1 -  \Phi(\eta z_{1-1/n}) ) +  P(\widehat \sigma_{j,n} < \eta \sigma_j )
\end{align*}
using \eqref{eq:berry-esseen} with  $m_j =  \E [ \vert \nabla^k \mathbf{1}_{\{X_1 =j\}}  - \nabla^k p(j) \vert^3] $. Using again \eqref{eq:quantile-bound}  we have that 
$z_{1-1/n}  \ge \eta \sqrt{2 \log n}$ for $n$ large enough.  Then, by \eqref{eq:millsratio} for $n$ large enough
\begin{align*}
1 - \Phi(\eta z_{1-1/n}) & \le    1 -  \Phi(\eta^2 \sqrt{2\log n}) \\
& \le     \frac{2\varphi( \eta^2\sqrt{2 \log n})}{\eta^2 \sqrt{2\log n}} \\
& =      \frac{2}{\eta^2 \sqrt{2\log n}} \frac{1}{\sqrt{2\pi}}   \exp(- \eta^4\log n )  \\
& =   \frac{2}{\eta^2 \sqrt{2\log n}}  \frac{1}{\sqrt{2\pi}} \frac{1}{n^{\eta^4}}  = o(1/\sqrt n)
\end{align*}
provided that $\eta \in [2^{-1/4}, 1)$.   Moreover, we have that 
\begin{align*}
& P(\widehat \sigma_{j,n} < \eta \sigma_j )  \\
&=P(\widehat \sigma_{j,n}^2 -\sigma_j^2 < \eta^2 \sigma_j^2-\sigma_j^2)\\
& = P(\sqrt n(\widehat \sigma^2_{j,n}  - \sigma_j^2) < -\sqrt n (1-\eta^2) \sigma^2_j ) \\
& \le  P\Big(\sqrt n \Big(\frac{1}{n} \sum_{i=1}^n ( Z_{i,j}  - \mu_j)^2 - \sigma^2_j  \Big) - \sqrt n (\bar Z_j  - \mu_j)^2 < -\sqrt n (1-\eta^2) \sigma^2_j \Big) \\
& \le P\Big(\sqrt n \Big(\frac{1}{n} \sum_{i=1}^n ( Z_{i,j}  - \mu_j)^2 - \sigma^2_j  \Big) < -\sqrt n (1-\eta^2) \sigma^2_j/2\Big) \\
&  \ \  + \  P\Big(- \sqrt n (\bar Z_j  - \mu_j)^2 < -\sqrt n (1-\eta^2) \sigma_j^2/2 \Big).  
\end{align*}
The first term can be shown to be $O(1/\sqrt n)$ using similar arguments as done above for bounding $P(\widehat \sigma_{j,n} > 2 \sigma_j )$. As for the second term, we can write that 
\begin{align*}
& P\Big(- \sqrt n (\bar Z_j  - \mu_j)^2 < -\sqrt n (1-\eta^2) \sigma_j^2/2 \Big) \\
& =   P\Big((\bar Z_j  - \mu_j)^2 > (1-\eta^2) \sigma_j^2/2 \Big)   \\
& \le   P\Big(\sqrt n \vert \bar Z_j  - \mu_j \vert > \sigma_j \sqrt{1-\eta^2} / \sqrt 2 \Big)   \\
& \le   2 \ | P(\sqrt n (\bar{Z}_j - \mu_j) > \sigma_j \sqrt{1-\eta^2} / \sqrt 2 )  - (1- \Phi(\sigma_j \sqrt{1-\eta^2} / \sqrt 2 ) | \\
& \ \ +   \ 2 \Big(1  -  \Phi(\sigma_j \sqrt{1-\eta^2} \sqrt n / \sqrt 2 )\Big)      \\
&\leq  O(1/\sqrt{n})  +   \frac{4 \sqrt 2 \varphi( \sigma_{j} \sqrt{1-\eta^2} \sqrt n /\sqrt 2)}{\sigma_{j} \sqrt{1-\eta^2} \sqrt n } \\
& =  O(1/\sqrt{n})  + o(1/\sqrt n)  =  O(1/\sqrt n).  
\end{align*} 
Therefore, merging all the pieces, we conclude that 
 $$
 P(\widehat{I}_{0,n}  \ne I_0)  =  O(1/\sqrt n). 
 $$
 
\medskip
\noindent 2. The aim is to show that $P(\widehat I_{0,n} \ne \emptyset ) = O(1/\sqrt n)$. Using the same argument as in point 1, we replace $\widehat S_{k,n}$ by $\mathcal S_k$ in the definition of $\widehat I_{0,n}$ and use the resulting set $\widetilde I_{0,n}$ at the cost of a further error term of order $1/\sqrt n$.  Thus, we aim to show that $$\lim_{n \to \infty}  P(\widetilde I_{0,n} \ne \emptyset) = O(1/\sqrt n).$$ 
Similarly to 1, it can be shown that $P(\widehat \sigma_{j,n} > 2 \sigma_j) = O(1/\sqrt n)$ and hence 
\begin{align*}
P(\widetilde I_{0,n} \ne \emptyset) &  \le   \sum_{j \in \mathcal S_k}  P\Big(   \frac{\sqrt{n}\nabla^k \widehat p_n(j)}{ \widehat \sigma_{j,n} }  \le z_{1-1/n}   \Big)   \\
& =  \sum_{j \in \mathcal S_k}  P\Big(   \frac{\sqrt{n}(\nabla^k \widehat p_n(j)  - \nabla^k p(j))}{ \sigma_j }  \le 2 z_{1-1/n}  - \frac{\sqrt n \nabla^k p(j)}{\sigma_j} \Big) + O(1/\sqrt n)  \\
& \le  \sum_{j \in \mathcal S_k}  P\Big(  \frac{ \sqrt{n}(\nabla^k \widehat p_n(j)  - \nabla^k p(j))}{ \sigma_j }  \le   4 \sqrt{2 \log n }   -  \frac{\sqrt n \nabla^k p(j)}{\sigma_j} \Big)  + O(1/\sqrt n)
\end{align*}
using \eqref{eq:quantile-bound}. 
By the Berry-Esseen quantitative bound in 
 \eqref{eq:berry-esseen} and using \eqref{eq:millsratio})  it follows that 
 \begin{align*}
P(\widetilde I_{0,n} \ne \emptyset)  & \le O(1/\sqrt{n})
+ \sum_{j \in \mathcal S_k}  \Big(1- \Phi\Big( \frac{\sqrt n \nabla^k p(j)}{\sigma_j} -4 \sqrt{2\log n} \Big)\Big) + O(1/\sqrt n) \\
& \le  O(1/\sqrt{n})  + 2 \sum_{j \in \mathcal S_k} \frac{\varphi\Big(\frac{\sqrt n \nabla^k p(j)}{\sigma_j} - 4 \sqrt{2\log n} \Big)}{\frac{\sqrt n \nabla^k p(j)}{\sigma_j} - 4 \sqrt{2\log n}}  + O(1/\sqrt n),   \\
& = O(1/\sqrt{n})
\end{align*}
which completes the proof of 2.

\medskip
\noindent 3. In the third case, the true p.m.f.\ $p$ is such that $\rho_k < 0$. Let us consider the set
$$
\mathcal{S}_k^-=\big \{j \in \mathcal S_k: \nabla p^k(j) \le 0 \big \}. 
$$
Note that $\mathcal{S}_k^-$ is finite and non-empty. It can be rewritten as 
$$
\mathcal{S}_k^-= I_-  \cup I_0 = \bigcup_{j\in\mathcal H_k} I_{h_j}  \cup I_0 \neq \emptyset
$$
where 
$$ \mathcal H^-_k  =   \{h \in [\rho_k, 0): \exists \ j \in \mathcal S_k \ \text{such that} \ \nabla^k p(j)=h \}.$$
Thus, for some integer $r \ge 1$, 
$$
\mathcal{H}^-_k=\{h_1, \ldots, h_r \} =   \{h\in \mathbb{R}: \nabla^k p(j)=h, \  j \in \mathcal S_k  \} \cap (-\infty, 0)  \ne \emptyset,
$$
since it contains at least $\rho_k$.
The set $I_0$ might be empty, with no consequence on the reasoning. Now, we will show that as $n\to\infty$,
\begin{align*}
 P(\widehat I_{0,n}  \ne I_- \cup I_0 )
 &= P((\widehat I_{0,n}^c \cup (I_- \cup I_0))\cup (\widehat I_{0,n}\cap (I_- \cup I_0)^c)\ne \emptyset)  \\
 &\leq P(\widehat I_{0,n}^c \cap (I_- \cup I_0) \ne \emptyset) + P(\widehat I_{0,n}\cap (I_- \cup I_0)^c\ne \emptyset) \\ 
 &= O(1/\sqrt n).
\end{align*}
As in the proof of 1 and 2, it is enough to show that the same statement holds with $\widetilde I_{0,n}$ instead of $\widehat I_{0,n}$ at the cost of an additional error of order $1/\sqrt n$. Suppose for simplicity that $(I_- \cup I_0)^c  \ne \emptyset$, and consider $j  \in \mathcal{S}_k$ such that $j \in \widetilde{I}_{0, n} \cap (I_- \cup I_0)^c$.  This means that for such $j$ it holds
\begin{align*}
  \frac{\sqrt n\nabla^k \widehat p_n(j)}{\widehat \sigma_{j,n}}  \le z_{1-1/n}, \ \ \text{and}  \ \  \nabla^k p(j) > 0.
\end{align*}
Let $\mathcal S^+_k=\{j\in \mathcal{S}_k:\nabla^k p(j) > 0\}$. By the union bound and similar arguments used for proving 2, we get
\begin{align*}
 P(\widetilde{I}_{0,n} \cap (I_- \cup I_0)^c\neq \emptyset) &\le \sum_{j\in\mathcal{S}^+_k}P\Big(  \frac{\sqrt{n} (\nabla^k \widehat p_n(j)  - \nabla^k p(j))}{ \sigma_j }  \le 2 z_{1-1/n}  - \frac{\sqrt n \nabla^k p(j)}{\sigma_j}\Big ) + O(1/\sqrt n) \\
 & =   O(1/\sqrt n).
\end{align*}
Note that the case $(I_- \cup I_0)^c = \emptyset$ is trivial because it implies $ P(\widetilde{I}_{0,n} \cap (I_- \cup I_0)^c\neq \emptyset) =0$.    Consider now  $j \in \widetilde I^c_{0, n}  \cap (I_- \cup I_0)$. Then, for all such $j$'s, it holds
\begin{align*}
  \frac{\sqrt n\nabla^k \widehat p_n(j)}{ \widehat \sigma_{j,n} }  >  z_{1-1/n}, \  \text{and}  \  \nabla^k p(j) \le 0.   
\end{align*}
Fix $\eta \in (0,1)$ and let $n$ be large enough so that 
$$
P(\widehat\sigma_{j,n}  < \eta \sigma_j ) = O(1/\sqrt n).
$$
Since $\nabla^k p(j) \le 0$, similar arguments as in 2 yield 
\begin{align*}
P(\widetilde{I_0}^c  \cap (I_- \cup I_0)\neq \emptyset)&\le \sum_{j\in\mathcal{S}^-_k}
P\Big(  \frac{\sqrt n(\nabla^k \widehat p_n(j)-\nabla^k p(j))}{\sigma_j }  >  \eta z_{1-1/n} -  \frac{\sqrt n \nabla^k p(j)}{\sigma_j}\Big)  + O(1/\sqrt n) \\
& \le O(1/\sqrt n)  + \sum_{j\in\mathcal{S}^-_k}
P\Big(  \frac{\sqrt n(\nabla^k \widehat p_n(j)-\nabla^k p(j))}{\sigma_j }  >  \eta z_{1-1/n} \Big) \\
& \le O(1/\sqrt n) + \sum_{j\in\mathcal{S}^-_k} (1  - \Phi(\eta z_{1-1/n})  ) =  O(1/\sqrt n),
\end{align*}
using the same arguments as done above and provided that $\eta \in [2^{-1/4}, 1)$.
Therefore, we conclude that
$$
P(\widehat I_{0,n}  \ne I_- \cup I_0 )  = O(1/\sqrt n),
$$
which completes the proof.
\end{proof}

\subsection*{Proof of Proposition \ref{prop:selectionk}}
\begin{proof}[Proof of Proposition \ref{prop:selectionk}]
Let $\widehat  {\mathcal S}_n$ and $\mathcal S $ be the empirical and true support, and similar arguments as in the proof of Theorem \ref{thm:selection} imply that $P(\widehat  {\mathcal S}_n \ne \mathcal S)  = O(1/\sqrt n) $. Hence, at the cost of an additive error or order $1/\sqrt n$, we can replace $X_{(n)} - X_{(1)}$ with $M-m$ in the definition of $\widehat k_n$. 

Let us start with the case $k_0 =0$.  Note that $\widehat k_n \ne 0$ implies that  $1$-monotonicity is not rejected while it does not hold.   Therefore, 
\begin{align*}
P(\widehat k_n \ne 0)  & \le   P_{H^{(1)}_1}(X_{1:n} \notin C_{1,n}(\alpha))  + O(1/\sqrt n) \\
& =  1- P_{H^{(1)}_1}(X_{1:n} \in C_{1,n}(\alpha))  + O(1/\sqrt n).
\end{align*}
From point 3 of Corollary \ref{cor:rejection-region} applied to $k=1$, it follows  that  $\lim_{n \to \infty} P(\widehat k_n \ne 0) = 0$.  

Now, we consider the case $k_0 > 0$.  We have that 
\begin{align*}
P(\widehat k_n \ne k_0 )   =   P(\widehat k_n < k_0 )  + P(\widehat k_n >  k_0 ).
\end{align*}
By definition of $\widehat k_n$ and $k_0$ the inequality $\widehat k_n < k_0 $ implies that $(\widehat k_n + 1)$-monotonicity of $p$ is rejected while it is true. Hence,
\begin{align*}
\limsup_{n \to \infty} P(\widehat k_n < k_0 ) &  \le    \limsup_{n \to \infty} \sum_{ \widehat k_n + 1 \le k \le k_0} P_{H^{(k)}_0} (X_{1:n} \in C_{k,n}(\alpha)) \\
&\le     \limsup_{n \to \infty} \sum_{ 1 \le k \le k_0} P_{H^{(k)}_0} (X_{1:n} \in C_{k,n}(\alpha)) \le \alpha
\end{align*}
by points 1 and 2 of Corollary \ref{cor:rejection-region}.  Moreover,  $\widehat k_n > k_0$  implies that we do not reject $(k_0+1)$-monotonicity of $p$ while it does not hold.   Thus,
\begin{align*}
P(\widehat k_n > k_0 )  & \le     P_{H^{(k_0+1)}_1} ( X_{1:n}  \notin C_{k_0+1, n}(\alpha) )  + O(1/\sqrt n)  \\
& =   1 - P_{H^{(k_0+1)}_1} ( X_{1:n} \in C_{k_0+1, n}(\alpha)) +   O(1/\sqrt n)
\end{align*}
and hence $\lim_{n \to \infty} P(\widehat k_n > k_0 ) = 0$, using again point 3 of Corollary \ref{cor:rejection-region} applied for $k = k_0 +1$.
\end{proof}

\subsection*{Proof of Theorem \ref{thm:contig}}
\begin{proof}[Proof of Theorem \ref{thm:contig}]
 1. Without loss of generality suppose $\mathcal{S}=\{1,\dots,M\}$. Note that 
  $\widehat {\mathcal{S}}_{k,n}=\mathcal{S}_k$ with probability 1 for $n$ large enough. Indeed, Theorem 1 from \cite{hu1989strong} implies that for a triangular array of i.i.d.\ random variables $Z_{n,1}, \ldots, Z_{n, n}$ with mean 0 and $\E[\vert Z_{1,1} \vert^{2s} ] < \infty$ for some $1 \le s < 2$, 
  $$
  \frac{1}{n^{1/s}} \sum_{j=1}^n  Z_{n,j}  \overset{\text{a.s.}}{\to} 0.
  $$
  Fix $j \in \mathcal S$ and take $Z_{n, i} =  \mathbf{1}_{\{X_{n, i} = j\}} -  p_n(j)$. Since  $\E[\vert Z_{1,1} \vert^{2} ] = p_1(j) (1-p_1(j)) \le 1 < \infty$, it follows that 
  $$
  \frac{1}{n} \sum_{j=1}^n (\mathbf{1}_{\{X_{n, i} = j\}} -  p_n(j) ) = \widehat p_n(j) -  p_n(j) \overset{\text{a.s.}}{\to} 0.
  $$
  Therefore, for all $j \in \mathcal S$, $\widehat p_n(j) \overset{\text{a.s.}}{\to} p(j) > 0$ and hence all the elements of $\mathcal S$ are observed with probability 1 for $n$ large enough.  This also implies that $\widehat S_{n, k} = \mathcal S_k$ with probability 1  for $n$ large enough.
  
Now note that for $j \in \mathcal S_k$, we have that
\begin{align*}
\nabla^k p_n(j)  = \nabla^k p(j)  +  \frac{1}{\sqrt n} \nabla^k h(j). 
\end{align*}
Let $I_+: = \{j \in \mathcal S_k: \nabla^k p(j) > 0 \}$ and note that if $j \in I_+$,  
\begin{align*}
\nabla^k p_n(j)  \ge \frac{\nabla^k p(j)}{2}  > 0
\end{align*}
for $n$ large enough. 
If $\nabla^k p(j) = 0$; i.e., $j \in I_0$,  then $\nabla^k p_n(j)  =   \nabla^k h(j)/\sqrt{n}$,  therefore, for all $j \in I_0$
$$
\lim_{n \to \infty}  \nabla^k p_n(j)  =  0.
$$
This implies that for $n$ large enough the value of $\min_{j \in \mathcal S_k} \nabla^k p_n(j)$ is reached in the set $I_0$.  Let $I^\delta$ be the same set defined above.  Then, for $n$ large enough, it holds that
\begin{align*}
\min_{j\in \mathcal{S}_k} \nabla^k  p_n(j)  =   \min_{j\in I^\delta} \nabla^k  p_n(j)  =  -\frac{\delta}{\sqrt n}.  
\end{align*}
Now, we study the weak convergence of the random vector $\sqrt n (\nabla^k \widehat p_n(j) - \nabla^k p_n(j))_{j \in \mathcal S_k}$.  
For $i \in \{1, \ldots, n \}$, let us define the vector
\begin{align}\label{Y}
Y_{n, i} =  \frac{1}{\sqrt n} (\nabla^k \mathbf{1}_{\{X_{n, i} = j\}} -  \nabla^k p_n(j))_{j \in \mathcal S_k}. \end{align}
For $\epsilon > 0$ we will show that
\begin{align}\label{eq:Cond1LF}
\lim_{n \to \infty} \sum_{i=1}^n \E[\Vert Y_{n, i} \Vert^2 \mathbf{1}_{\Vert Y_{n, i}  \Vert > \epsilon}  ] =0.
\end{align}
In fact, we have that 
\begin{align*}
\Vert Y_{n, i} \Vert^2  =  \frac{1}{n} \sum_{j \in \mathcal S_k }  (\nabla^k \mathbf{1}_{X_{n, i} = j} -  \nabla^k p_n(j))^2
\end{align*}
and hence $\Vert Y_{n, i} \Vert > \epsilon$ is equivalent to $$\sum_{j \in \mathcal S_k }  (\nabla^k \mathbf{1}_{\{X_{n, i} = j\}} -  \nabla^k p_n(j))^2 > n \epsilon^2.$$
Recall that for any real function $q$ defined in $\mathcal S$, we have that 
$$
\nabla^k q(j) = \sum_{l=0}^k (-1)^l \binom{k}{l} q(j + l).
$$
Thus, if $\Vert q \Vert_\infty \le 1$, then 
$$
\vert \nabla^k q(j) \vert \le \sum_{l=0}^k \binom{k}{l} = 2^k.
$$
Letting 
$q(j) =  \mathbf{1}_{\{X_{n, i} = j\}}  -  p_n(j),$
it holds that 
\begin{align}
\label{eq:majsup}
\sup_{j\in \mathcal{S}_k} \vert \nabla^k (\mathbf{1}_{\{X_{n, i} = j\}} - p_n(j))  \vert^2 \le 2^{2k}.
\end{align}
Then, $\Vert Y_{n, i} \Vert > \epsilon$ implies that 
\begin{align*}
\vert \mathcal S_k \vert  2^{2k} \ge \vert \mathcal S_k \vert \sup_{j \in \mathcal S_k}  | \nabla^k \mathbf{1}_{\{X_{n, i} = j\}} -  \nabla^k p_n(j) | ^2  > n \epsilon^2
\end{align*}  
which is impossible if $n \ge n_0 := \vert \mathcal S_k  \vert 4^{k} /\epsilon^2$.   Hence, for all $n \ge n_0$, we have that 
$$
\Vert Y_{n, i} \Vert^2 \mathbf{1}_{\Vert Y_{n, i}  \Vert > \epsilon} =0
$$
and the condition in \eqref{eq:Cond1LF} is satisfied.  Next, we show 
\begin{align}\label{eq:Cond2LF}
\sum_{i=1}^n \text{Cov}(Y_{n, i} ) \to \Sigma_k.
\end{align}
For $r,s \in \{1,\dots, M-k-2\}$ we have that 
\begin{align*}
\sum_{i=1}^n (\cov[Y_{n, i}])_{r, s} & =  \frac{1}{n} \sum_{i=1}^n \cov\Big[\nabla^k \mathbf{1}_{\{X_{n, i} = r\}} -  \nabla^k p_n(r), \nabla^k \mathbf{1}_{\{X_{n, i} = s\}} -  \nabla^k p_n(s) \Big]  \\
& =  \cov\Big[\nabla^k \mathbf{1}_{\{X_{n, 1} = r\}} -  \nabla^k p_n(r), \nabla^k \mathbf{1}_{\{X_{n, 1} = s\}} -  \nabla^k p_n(s) \Big] \\
& =  \E [\nabla^k \mathbf{1}_{\{X_{n, 1} = r\}} \nabla^k \mathbf{1}_{\{X_{n, 1} = s\}}]  -  \nabla^k p_n(r) \nabla^k p_n(s),
\end{align*}
where 
\begin{align*}
\nabla^k p_n(r) \nabla^k p_n(s)  & =     \nabla^k p(r) \nabla^k p(s)  +  \frac{1}{\sqrt n} \Big( \nabla^k p(r) \nabla^k h(s)  +  \nabla^k p(s) \nabla^k h(r) \Big)  +  \frac{1}{n}   \nabla^k h(r) \nabla^k h(s) \\
& \to   \nabla^k p(r) \nabla^k p(s) =  \E[\mathbf{1}_{\{X_{1} = r\}} ] \E[\mathbf{1}_{\{X_{1} = s\}}]
\end{align*}
as $n \to \infty$, with $X_1 \sim p$. Assume without loss of generality that $r \ge s$, using the fact that 
$$
\nabla^k \mathbf{1}_{\{X_{n, 1} = j\}}  = \sum_{l=0}^k (-1)^l \binom{k}{l} \mathbf{1}_{\{X_{n, 1} = j+l \}}
$$
we can write 
\begin{align*}
&\E [\nabla^k \mathbf{1}_{\{X_{n, 1} = r\}} \nabla^k \mathbf{1}_{\{X_{n, 1} = s\}}]  \\
&  =\E \Big[\Big(\sum_{l=0}^k (-1)^l \binom{k}{l} \mathbf{1}_{\{X_{n, 1} = l + r\}} \Big)  \Big(\sum_{l'=0}^k (-1)^{l'} \binom{k}{l'} \mathbf{1}_{\{X_{n, 1} = l' + s\}} \Big)    \Big]  \\
&  =  \sum_{l, l'=0}^k (-1)^{l+l'} \binom{k}{l} \binom{k}{l'} \E[\mathbf{1}_{\{X_{n, 1} = l + r\}} \mathbf{1}_{\{X_{n, 1} = l' + s\}}] \\
&  =  \sum_{l=0}^k (-1)^{2l+ r -s} \binom{k}{l} \binom{k}{l + r- s} p_n(l + r)  \\
& =  \sum_{l=0}^k (-1)^{r -s} \binom{k}{l} \binom{k}{l + r- s} p_n(l + r)
\end{align*}
where we used the fact that $\E[\mathbf{1}_{\{X_{n, 1} = l + r\}} \mathbf{1}_{\{X_{n, 1} = l' + s\}}]  = p_n(l +s)$ if $l' =  l + r -s$ and 0 otherwise.  It follows that
\begin{align*}
\E [\nabla^k \mathbf{1}_{\{X_{n, 1} = r\}} \nabla^k \mathbf{1}_{\{X_{n, 1} = s\}}]    \to  \sum_{l=0}^k (-1)^{r -s} \binom{k}{l} \binom{k}{l + r- s} p(l + r)
\end{align*}
as $n \to \infty$. Now, by the same calculations above, we can show that 
$$
\sum_{l=0}^k (-1)^{r -s} \binom{k}{l} \binom{k}{l + r- s} p(l + r) =  \E [\nabla^k \mathbf{1}_{\{X_{1} = r\}} \nabla^k \mathbf{1}_{\{X_{1} = s\}}].
$$
We conclude that
\begin{align}\label{LindCond}
\sum_{i=1}^n (\cov[Y_{n, i}])_{r, s}  \to  \E [\nabla^k \mathbf{1}_{\{X_{1} = r\}} \nabla^k \mathbf{1}_{\{X_{1} = s\}}] -  \E [\nabla^k \mathbf{1}_{\{X_{1} = r\}}] \E [\nabla^k \mathbf{1}_{\{X_{1} = s\}}]  = (\Sigma_{k})_{r, s}  
\end{align}
which shows that the condition in (\ref{eq:Cond2LF}) is fulfilled, and by the Lindeberg-Feller Central Limit Theorem (see Proposition 2.27 in \citealp{van2000asymptotic}) we have
$$
\sqrt n (\nabla^k \widehat p_n(j) - \nabla^k p_n(j))_{j \in \mathcal S_k} \overset{d}{\to} Z \sim \mathcal{N}(0,\Sigma_k).
$$
Now, we can write that
\begin{align*}
\sqrt n (\widehat \rho_{k, n} - \min_{j\in \mathcal{S}_k} \nabla^k  p_n(j) )  &=     \sqrt n \Big( \min_{j \in \mathcal S_k} \nabla^k \widehat p_n(j) + \frac{\delta}{\sqrt n}  \Big) \\
& =   \sqrt n  \min_{j \in \mathcal S_k} \Big( \nabla^k \widehat p_n(j) + \frac{\delta}{\sqrt n}  \Big) \\
& =  \sqrt n  \min_{j \in I_0} \Big (\nabla^k \widehat p_n(j) + \frac{\delta}{\sqrt n}  \Big) \wedge  \sqrt n \min_{j \in I_+} \Big(\nabla^k \widehat p_n(j) + \frac{\delta}{\sqrt n}  \Big) \\
& =   \min_{j \in  I_0}  \Big( \sqrt n(\nabla^k \widehat p_n(j) - \nabla^k p_n(j) )  +
 \nabla^k h(j) + \delta \Big) \\
&\quad \wedge  \min_{j \in  I_+}  \Big( \sqrt n(\nabla^k \widehat p_n(j) - \nabla^k p_n(j)) +  \sqrt n \nabla^k p_n(j) + \delta \Big) \\
&=  \min_{j \in  I_0}  \Big( \sqrt n(\nabla^k \widehat p_n(j) - \nabla^k p_n(j) )  +
 \nabla^k h(j) + \delta \Big)
\end{align*}
where in the last step we used that the second term is larger for $n$ large enough, as a consequence of the fact that
$$\sqrt n  \nabla^k p_n(j)  \to \infty \ \ \text{and} \ \ \sqrt n (\nabla^k \widehat p_n(j) - \nabla^k p_n (j)  = O_P(1) $$
for all $j \in I_+$.    Hence, using the Continuous Mapping Theorem, it holds with probability tending to 1 that
\begin{align*}
 \sqrt n (\widehat \rho_{k, n} - \min_{j\in \mathcal{S}_k} \nabla^k  p_n(j) ) 
 & =  \min_{j \in I_0}  \Big( \sqrt n (\nabla^k \widehat p_n(j) - \nabla^k p_n(j))   + \nabla^k h(j) + \delta \Big)   \\
& \overset{d}{\to}  \min_{j \in I_0}  (Z_j +  \nabla^k h(j)  +  \delta )  \\
&=     \min_{j \in I_0}  (Z_j +  \nabla^k h(j) ) + \delta,
 \end{align*}
as claimed.

\medskip

\noindent 2. We want to show that $P(\widehat{I}_{0, n} 
 \ne I_0)  = O(1/\sqrt n)$.  To this aim, we first bound the probability $P( \widehat S_{k, n}  \ne \mathcal S_k )$.
 Using the union bound, we have that  
\begin{align*}
  P( \widehat S_{k, n}  \ne \mathcal S_k )&= P(\exists \ j \in \mathcal S_k:  \widehat p_n(j) 
 = 0) \\
 &\le     \sum_{j \in \mathcal S_k}  P(\widehat p_n(j) = 0)  \\
 & =   \sum_{j \in \mathcal S_k}  P\Big(\frac{\sqrt n (\widehat p_n(j)  - p_n(j))}{\sqrt{p_n(j) (1- p_n(j))} }  = - \frac{\sqrt n p_n(j)}{\sqrt{p_n(j) (1- p_n(j))}} \Big)  \\
 & \le  \sum_{j \in \mathcal S_k}  P\Big(\frac{\sqrt n (\widehat p_n(j)  - p_n(j))}{\sqrt{p_n(j) (1- p_n(j))} }  \le - \frac{\sqrt n p_n(j)}{2\sqrt{p_n(j) (1- p_n(j))}} \Big) \\
 & \le  \sum_{j \in \mathcal S_k}  P\Big(\Big \vert \frac{\sqrt n (\widehat p_n(j)  - p_n(j))}{\sqrt{p_n(j) (1- p_n(j))} } \Big \vert \ge \frac{\sqrt n p_n(j)}{2\sqrt{p_n(j) (1- p_n(j))}} \Big)  \\
 & \le    \sum_{j \in \mathcal S_k} \frac{4 p_n(j) (1- p_n(j))}{n p^2_n(j)}  =  \frac{4}{n} \sum_{j \in \mathcal S_k} \frac{1- p_n(j)}{p_n(j)}
\end{align*}
where we used the Chebyshev inequality. Recalling that $p_n(j) 
 =  p(j)  +  h(j) / \sqrt n$, we have that  $p_n(j)  \ge p(j) /2 $  for all $j \in \mathcal S_k$  provided that 
 $ n > 4 \sup_{j \in \mathcal S_k}  h(j) ^2/p(j)^2$. 
 It follows that for $n$ large enough
 \begin{align}
 \label{eq:S_k}
  P( \widehat S_{k, n}  \ne\mathcal S_k )  \le  \frac{1}{n}  \frac{8 \vert \mathcal S_k \vert }{ \min_{j \in \mathcal S_k} p(j)}.
 \end{align}
Now, we turn to bounding the probability $P(\widehat I_{0, n} \ne I_0)$. Let us denote by $\widetilde{I}_{0, n}$ the set which is similarly defined as $\widehat I_{0, n}$ expect that $\widehat S_{k, n}$ is replaced by $\mathcal S_k$.  By \eqref{eq:S_k}, we have that
\begin{align*}
P( \widehat I_{0, n} \ne I_0 ) \le  P( \widetilde I_{0, n} \ne I_0 )  +  O(1/n).
\end{align*}
Now, we rewrite $P( \widetilde I_{0, n} \ne I_0 )$ as
\begin{align*}
  P( \widetilde I_{0, n} \ne I_0 ) &\le P(\exists \ j \in  \widetilde I_{0, n}  \cap I_0^c )  +  P(\exists \ j \in  \widetilde I^c_{0, n}  \cap I_0) \\
 & =:  A  +B.
\end{align*}
 Recall that $j \in \widetilde I_{0, n}$ means that 
\begin{align*}
\frac{\sqrt n \nabla^k \widehat p_n(j)}{\sqrt{(\widehat \Sigma_{k, n})_{j, j}}} \le z_{1-1/n}
\end{align*}
 where $\widehat \Sigma_{k, n}$ is the empirical estimator of the true covariance matrix of $(\nabla^k \mathbf{1}_{\{X_{n, 1} =  j \}})_{j \in \mathcal S_k}$. For simplicity, let $\widehat \sigma_{j, n}=\sqrt{(\widehat \Sigma_{k, n})_{j, j}}$,   let us denote 
$$Z_{i,j}  = \nabla^k \mathbf{1}_{\{X_{n,i} = j\}}, \ \ \text{for} \ \ i=1, \ldots, n,$$
and let $\sigma^2_{j, n}$ be the true variance of $Z_{1,j}$.
Then, we have that  
 \begin{align}
 \label{eq:sigma}
\widehat \sigma_{j, n}^2 =  \frac{1}{n} \sum_{i=1}^n (  Z_{i, j} -  \nabla^k p_n(j) )^2 - (\nabla^k \widehat p_n(j) -   \nabla^k p_n(j) )^2. 
 \end{align}
  Since $I_+ = \mathcal S_k \setminus  I_0 $, we have
\begin{align*}
A  & \le     \sum_{j \in  I_+}  P\Big( \frac{\sqrt n (\nabla^k \widehat p_n(j)  - \nabla^k p_n(j) )}{\widehat \sigma_{j, n}}   \le  z_{1-1/n}  -  \frac{\sqrt n \nabla^k p_n(j) }{\widehat \sigma_{j, n}} \Big)  \\
& =    \sum_{j \in  I_+}  P\Big( \frac{\sqrt n (\nabla^k \widehat p_n(j)  - \nabla^k p_n(j) )}{\sigma_{j}}   \le \frac{\widehat \sigma_{j, n}}{\sigma_j} z_{1-1/n}  -  \frac{\sqrt n \nabla^k p_n(j) }{\sigma_{j}} \Big)  \\
& \le   \sum_{j \in  I_+}  P\Big( \frac{\sqrt n (\nabla^k \widehat p_n(j)  - \nabla^k p_n(j) )}{\sigma_{j}}   \le 2 z_{1-1/n}  -  \frac{\sqrt n \nabla^k p_n(j) }{\sigma_{j}} \Big)  +  \sum_{j \in  I_+} P (\widehat \sigma_{j, n} > 2 \sigma_j ) \\
& =:   A_1 + A_2.
\end{align*}
For $j \in I_+$, we have that $\nabla^k p(j)  > 0$ and, for $n$ large enough, $\nabla^k p_n(j)  \ge  \nabla^k p(j)/2$ and $z_{1-1/n} \le 2 \sqrt{2 \log n}$. Let $r_j = \nabla^k p(j)/\sigma_j$,  and take  $n$ large enough such that 
$$   {4 \sqrt {2\log n}}  \leq {\frac{\sqrt{n} }{2}\min_{j \in I_+} r_j}.$$ 
Then, 
\begin{align*}
2 z_{1-1/n}  -  \frac{\sqrt n \nabla^k p_n(j) }{\sigma_{j}} &\leq 4\sqrt{2\log n} -\sqrt{n}r_j \\
&\leq \frac{\sqrt{n}r_j }{2} -\sqrt{n}r_j = -\frac{\sqrt{n}r_j }{2}
\end{align*}
for all $j \in I_+$.   Thus, for $n$ large enough
\begin{align*}
& P\Big( \frac{\sqrt n (\nabla^k \widehat p_n(j)  - \nabla^k p_n(j) )}{\sigma_{j}}   \le 2 z_{1-1/n}  -  \frac{\sqrt n \nabla^k p_n(j) }{\sigma_{j}} \Big)  \\
& \le P\Big( - \frac{\sqrt n (\nabla^k \widehat p_n(j)  - \nabla^k p_n(j) )}{\sigma_{j}}  \ge   \frac{\sqrt n  r_j }{2} \Big) \\
& \le P\Big( \Big \vert \frac{\sqrt n (\nabla^k \widehat p_n(j)  - \nabla^k p_n(j))}{\sigma_{j}} \Big \vert \ge   \frac{\sqrt n  r_j }{2} \Big)   \le  \frac{4}{r^2_j n}
\end{align*}
using the Chebyshev inequality. Hence, 
\begin{align*}
A_1 \le  \frac{4}{n} \frac{ \vert I_+ \vert }{(\min_{j \in I_+} r_j^2)}    .
\end{align*}
Now, we turn to $A_2$. By \eqref{eq:sigma} we have that 
\begin{align*}
\sqrt n (\widehat \sigma_{j, n}^2 - \sigma_{j, n}^2 )  =  \sqrt n \Big(  \frac{1}{n} \sum_{i=1}^n (  Z_{n, i} -  \nabla^k p_n(j) )^2 - \sigma_{j, n}^2  \Big)  -    \sqrt n (\nabla^k \widehat p_n(j) - \nabla^k p_n(j)) ^2.
\end{align*}
Then,
\begin{align*}
A_2 & =    \sum_{j \in  I_+} P\Big ( \sqrt n (\widehat \sigma_{j, n}^2  - \sigma^2_{j, n} )  > 3 \sqrt n  \sigma^2_j  \Big)  \\
& \le  \sum_{j \in  I_+} P\Big(  \sqrt n \Big(  \frac{1}{n} \sum_{i=1}^n \left(  Z_{n, i} -  \nabla^k p_n(j) \right)^2 - \sigma_{j, n}^2  \Big)   > 3 \sigma^2_{j, n} \sqrt n  \Big). 
\end{align*}
Let $\gamma^2_{j, n}  =  \var[ (Z_{n, 1} -  \nabla^k p_n(j))^2] $. Then, we have
\begin{align*}
\gamma^2_{j, n}  =  \E[ (Z_{1, j} -  \nabla^k p_n(j))^4  ]  - \sigma^4_j.
\end{align*}
Now, using similar arguments as above, we can show that
\begin{align*}
\gamma^2_{j, n}  \to \gamma^2_{j}  =  \E[ (\nabla^k \mathbf{1}_{\{X_1 = j \}} -  \nabla^k p(j))^4  ]  - \sigma^2_j,
\end{align*}
where $X_1 \sim p$. Thus, for $n$ large enough, $\gamma_{j,n} \ge \gamma_{j} /2$ and, 
\begin{align*}
A_{2}  & =   \sum_{j \in  I_+}P\Big( \frac{\sqrt n (  \frac{1}{n} \sum_{i=1}^n (  Z_{n, i} -  \nabla^k p_n(j) )^2 - \sigma_{j, n}^2 )}{\gamma_{j, n}} >  \sqrt n\frac{3 \sigma_j^2}{\gamma_{j, n}}  \Big)  \\
& \le   \sum_{j \in  I_+} P\Big( \frac{\sqrt n (  \frac{1}{n} \sum_{i=1}^n (  Z_{n, i} -  \nabla^k p_n(j) )^2 - \sigma_{j, n}^2 )}{\gamma_{j, n}} >  \sqrt n\frac{6 \sigma_j^2}{\gamma_{j}}  \Big)  \\
& \le   \sum_{j \in I_+} \frac{1}{n} \frac{\gamma_j^2 }{36 \sigma^4_j}  \le \frac{1}{n} \frac{\vert I_+ \vert }{36} \max_{j \in I_+} \frac{\gamma_j^2}{\sigma^4_{j}} 
\end{align*}
by the Chebyshev inequality.
We conclude that $A = O(1/n)$. Now, we turn to the term $B$.  Recall that for $j \in I_0$, it holds that
\begin{align*}
\sqrt n \nabla^k p_n(j)  =  \nabla^k h(j).
\end{align*}
Then, it follows that
\begin{align*}
B & \le    \sum_{j \in I_0} P\Big( \frac{\sqrt n \nabla^k \widehat p_n(j)}{\widehat \sigma_{j,n}} >  z_{1-1/n}   \Big) \\
& =   \sum_{j \in I_0} P\Big( \frac{\sqrt n (\nabla^k \widehat p_n(j) - \nabla^k p_n(j)) }{\sigma_{j,n}} >   \frac{\widehat \sigma_{j, n}}{\sigma_{j,n}}  z_{1-1/n} - \frac{\nabla^k h(j)}{\sigma_{j, n}} \Big) \\
& \le   \sum_{j \in I_0} P\Big( \frac{\sqrt n (\nabla^k \widehat p_n(j) - \nabla^k p_n(j)) }{\sigma_{j,n}} >   \eta z_{1-1/n} - \frac{2 \vert \nabla^k h(j) \vert }{\sigma_j} \Big) +  P( \widehat \sigma_{j,n} < \eta \sigma_{j,n} ) \\
& =:  B_{1} + B_2
\end{align*}
where $\eta \in (0,1) $ is to be chosen later. With similar arguments as above, we can show that 
$$
B_2 = O(1/n).
$$
Now we employ the general Berry-Esseen theorem in \cite{esseen1969} applied to zero-mean independent random variables that are allowed to have different distributions, possibly depending on $n$. Let us denote by $F_{j, n}$ the distribution function of 
$$
\frac{\sqrt n(\nabla^k \widehat p_n(j) - \nabla^k p_n(j))}{\sigma_{j, n}}  =   \frac{1}{\sigma_{j,n} \sqrt {n } }\Big(\sum_{i=1}^n (Z_{i, j} -  \nabla^k p_n(j)) \Big) ,
$$
and let $m_{j, n}   =  \E[\vert Z_{1, j}-  \nabla^k p_n(j) \vert^3 ]$.  Then, according to (1.1) in \cite{esseen1969}
\begin{align*}
\Vert F_{j, n}  - \Phi  \Vert_\infty \le  C \frac{m_{j, n}}{\sqrt n \sigma_{j,n}^3}
\end{align*}
for some $C > 0$. Similarly to \eqref{eq:majsup} we have that $$\Vert Z_{1, j}-  \nabla^k p_n(j) \Vert_\infty \le 2^k.$$
Thus, it follows that for all $j \in I_0$,
\begin{align*}
\frac{m_{j, n}}{\sigma_{j,n}^3} \leq \frac{2^k\E[\vert Z_{1, j}-  \nabla^k p_n(j) \vert^2 ]}{\sigma_{j,n}^3} = \frac{  2^{k} }{\sigma_j/2}  
\end{align*}
almost surely for $n$ large enough.  Therefore,  setting $q_*  = 2 \max_{j \in I_0}  \vert \nabla^k h(j) \vert/\sigma_j$, we have that
\begin{align*}
B_1  & \le \frac{C}{\sqrt n} \frac{2^{k+1} \vert I_0 \vert}{\min_{j \in I_0} \sigma_j}  +  \sum_{j \in I_0} (1-\Phi( \eta z_{1-1/n} - 2 \vert \nabla^k h(j) \vert/\sigma_j   ) )  \\
& \le   \frac{C}{\sqrt n} \frac{2^{k+1} \vert I_0 \vert}{\min_{j \in I_0} \sigma_j} +  \vert I_0 \vert \ (1 - \Phi(\eta z_{1-1/n} - q_*) ).
\end{align*}
Now we choose $n$ large enough such that $$\eta z_{1-1/n} \ge \eta^2 \sqrt{2 \log n} \ \ \text{and} \ \ q_* \le (\eta^2 - \eta^3) \sqrt{2 \log n}.$$
This implies that 
\begin{align*}
\eta z_{1-1/n} - q_* &\ge \eta^2 \sqrt{2 \log n} - (\eta^2 - \eta^3) \sqrt{2 \log n} \\
&=  \eta^3 \sqrt{2 \log n}.
\end{align*}
Then, by \eqref{eq:millsratio} 
\begin{align*}
1 - \Phi(\eta z_{1-1/n} - q_*) & \le     1 -  \Phi(\eta^3 \sqrt{2\log n}) \\
& \le      \frac{2\varphi( \eta^3 \sqrt{2 \log n})}{\eta^3  \sqrt{2 \log n} } \\
& =      \frac{2}{\sqrt{2\pi} \eta^3 \sqrt{2\log n}}  \exp(- \eta^6 \log n   )  \\
& =    \frac{2}{\sqrt{2\pi} \eta^3 \sqrt{2\log n}}  \frac{1}{n^{\eta^6}}  = o(1/\sqrt n)
\end{align*}
 provided that $\eta^6  \ge 1/2$ or equivalently $\eta \in [2^{-1/6}, 1)$.  We conclude that $B_1 = o(1/\sqrt n)$ and $B = O(1/\sqrt n)$. Putting all the pieces together yields  
 $$P(\widehat{I}_{0,n} \ne  I_0) = O(1/\sqrt n)$$
 which completes the proof.
 
 \medskip

 \noindent 3.
Note that $\widehat{T}_{k,n}$ can  be re-written as
 \begin{align}\label{eq:convContig}
     \widehat{T}_{k,n}  & =  \sqrt{n}(\widehat{\rho}_{k,n}-\min_{j\in \mathcal{S}_k}\nabla p_n (j)) + \sqrt{n} \min_{j\in \mathcal{S}_k}\nabla p_n (j) \notag \\
     & =  \sqrt{n}(\widehat{\rho}_{k,n}-\min_{j\in \mathcal{S}_k}\nabla p_n (j)) - \delta  \notag \\
     &\overset{d}{\to}  \min_{j \in I_0}  (Z_j +  \nabla^k h(j) ) 
 \end{align}
 by point 1. 
Since $\nabla^k h(j)  \le 0$ for all $j \in I_0$, we have that 
$$\min_{j \in I_0}  (Z_j +  \nabla^k h(j) ) \le \min_{j \in I_0} Z_j.
$$ 
Fix $\epsilon > 0$. Also, let us assume that $ \widehat t_{\alpha, n}(I_0) \overset{P}{\to} t_\alpha( I_{0})$ (this convergence will be proved below).  Combining this with point 2 and the convergence in \eqref{eq:convContig} yields 
\begin{align*}
  P(\widehat{T}_{k,n} < \widehat t_{\alpha, n}(\widehat{I}_{0,n}))&=  P(\widehat{T}_{k,n} < \widehat t_{\alpha, n}(I_0), \widehat{I}_{0,n} = I _0)  +  P(\widehat{T}_{k,n} < \widehat t_{\alpha, n}(\widehat{I}_{0,n}), \widehat{I}_{0,n} \ne I _0)   \\
  & \ge  P(\widehat{T}_{k,n} < \widehat t_{\alpha, n}(I_0), \widehat{I}_{0,n} = I _0) \\
  & =   P(\widehat{T}_{k,n} < \widehat t_{\alpha, n}(I_0)) - P( \widehat{T}_{k,n} < \widehat t_{\alpha, n}(I_0), \widehat{I}_{0,n} \ne I _0) \\
  & \ge    P(\widehat{T}_{k,n} < \widehat t_{\alpha, n}(I_0)) - P(\widehat{I}_{0,n} \ne I _0) \\
  & \ge  P( \widehat{T}_{k,n} < \widehat t_{\alpha, n}(I_0)) - O(1/\sqrt{n})\\
& \ge   P(  \widehat{T}_{k,n}  < \widehat t_{\alpha, n}(I_0), \widehat{T}_{k,n} <  t_\alpha(I_0) - \epsilon) - O(1/\sqrt{n})  \\
& =    P(\widehat{T}_{k,n} < t_\alpha(I_0) - \epsilon)  -  P( \widehat{T}_{k,n} <  t_\alpha(I_0) - \epsilon,  \widehat{T}_{k,n} \ge \widehat t_{\alpha, n}(I_0) ) \\
&  \ - \  O(1/\sqrt{n})  \\
& \ge    P(\widehat{T}_{k,n} < t_\alpha(I_0) - \epsilon) - P(\widehat t_{\alpha, n}(I_0) < t_\alpha(I_0) - \epsilon) - \  O(1/\sqrt{n}) \\
&=  P(\min_{j \in I_0}  (Z_j +  \nabla^k h(j) )\leq t_\alpha(I_0)  - \epsilon) + o(1)\\
&\geq    P(\min_{j \in I_0}  Z_j  \leq t_\alpha(I_0) - \epsilon) + o(1) =\alpha  + o(1),
\end{align*} 
as $\epsilon \searrow 0$. This implies that 
$$\liminf_{n \to \infty} P (\widehat{T}_{k,n}< \widehat t_{\alpha,n}(\widehat{I}_{0,n}) ) \ge \alpha.$$ 
Suppose now $\max_{j \in I_0} \nabla^k h(j) = - \kappa   $ for some $\kappa \in (0, \delta]$.  Then, $$\min_{j \in I_0} (Z_j + \nabla^k h(j)) \le  \min_{j \in I_0} Z_j - \kappa, $$ 
and, hence,
$$
P(\min_{j \in I_0}  (Z_j +  \nabla^k h(j) )\leq t_\alpha(I_0)) \ge P(\min_{j \in I_0}  Z_j  \leq t_\alpha(I_0) + \kappa).
$$
Therefore,
\begin{align*}
 P(\widehat{T}_{k,n}< \widehat t_{\alpha, n}(\widehat{I}_{0,n}))  \ge  P( \min_{j \in I_0} Z_j  \leq t_\alpha(I_0)  + \kappa )  +  o(1),
\end{align*}
and
\begin{align*}
\liminf_{n \to \infty} P(\widehat{T}_{k,n}< \widehat {t}_{\alpha, n}(\widehat{I}_{0,n})) \ge  P( \min_{j \in I_0} Z_j  \leq t_\alpha(I_0)  + \kappa ) > \alpha.
\end{align*}
Now, we show the claimed convergence
\begin{align*}
\widehat t_{\alpha, n}(I_0) \overset{P}{\to} t_\alpha( I_{0}).
\end{align*}
Recall that $\widehat t_{\alpha, n}(I_0)$ is the $\alpha$-quantile of $\min_{j \in I_0} \widehat Z_j$, where 
$$
(\widehat Z_j)_{j \in \mathcal S_k}  \sim \mathcal N(0, \widehat \Sigma_{k, n}).
$$
Using Lemma \ref{lem:convquant}, it is enough to show that $\widehat \Sigma_{k, n}  \overset{P}{\to} \Sigma_k$.  Without loss of generality, we assume that $m=1$. For $i, j \in \{m, \ldots, M- m - k\} $,  the $(i, j)$ entry of $\widehat \Sigma_{k, n}$ is given by
$$
(\widehat \Sigma_{k, n})_{ij} = \frac{1}{n} \sum_{l=1}^n \nabla^k \mathbf{1}_{\{ X_{n, l} = i \} } \nabla^k \mathbf{1}_{\{ X_{n, l} = j \} }  -  \Big(\frac{1}{n} \sum_{l=1}^n  \nabla^k \mathbf{1}_{\{ X_{n, l} = i \} } \Big)\Big(\frac{1}{n} \sum_{l=1}^n  \nabla^k \mathbf{1}_{\{ X_{n,l} = j \} } \Big). 
$$
Now, for a fixed $i \in \mathcal S_k$ define 
$$
Z_{n, l} =  \nabla^k \mathbf{1}_{\{ X_{n, l} = i \} }   -  \E[\nabla^k \mathbf{1}_{\{ X_{n, 1} = i \} }] 
$$
for $l =1, \ldots, n$.  We have $\E[Z_{n,l}] = 0$ and
\begin{align*}
 \E[\vert Z_{1,1} \vert^2]   =     \text{Var}\left(\nabla^k \mathbf{1}_{\{ X_{1, 1} = i \} }\right) 
 \le   \E[(\nabla^k \mathbf{1}_{\{ X_{n, 1} = i \} } )^2]  \le 2^{2k}.
\end{align*}
Theorem 1 from \cite{hu1989strong} implies that 
$$
\frac{1}{n} \sum_{l=1}^n Z_{n, l}  \overset{{\text{a.s.}}}{\to} 0
$$
or equivalently
$$
\frac{1}{n} \sum_{l=1}^n \nabla^k \mathbf{1}_{\{ X_{n, l} = i \} }   -  \E[\nabla^k \mathbf{1}_{\{ X_{n, 1} = i \}} ]\overset{{\text{a.s.}}}{\to}  0.
$$
Now, fix $i, j \in \mathcal S_k$ and let
$$
Z_{n, l} =  \nabla^k \mathbf{1}_{\{ X_{n, l} = i \} } \nabla^k \mathbf{1}_{\{ X_{n, l} = j \} }  -  \E[\nabla^k \mathbf{1}_{\{ X_{n, 1} = i \} } \nabla^k \mathbf{1}_{\{ X_{n, 1} = j \} }] 
$$
for $l =1, \ldots, n$. We have $\E[Z_{n,l}] = 0$ and 
\begin{align*}
 \E[\vert Z_{1,1} \vert^2]  & =    \text{Var}\left(\nabla^k \mathbf{1}_{\{ X_{1, 1} = i \} } \nabla^k \mathbf{1}_{\{ X_{1, 1} = j \} }\right)   \\
 & \le     \E\left[(\nabla^k \mathbf{1}_{\{ X_{1, 1} = i \} } )^2 (\nabla^k \mathbf{1}_{\{ X_{1, 1} = j \} })^2 \right]   \le 2^{4k}.
\end{align*}
By the same theorem, we conclude that
$$
\frac{1}{n} \sum_{l=1}^n \nabla^k \mathbf{1}_{\{ X_{n, l} = i \} } \nabla^k \mathbf{1}_{\{ X_{n, l} = j \} }  -  \E[\nabla^k \mathbf{1}_{\{ X_{n, 1} = i \} } \nabla^k \mathbf{1}_{\{ X_{n, 1} = j \} }]\overset{{\text{a.s.}}}{\to} 0.
$$
Therefore, for all $i, j \in \mathcal S_k$
$$
(\widehat \Sigma_{k, n})_{i,j} - \text{Cov}( \nabla^k \mathbf{1}_{\{ X_{n, 1} = i \} }, \nabla^k \mathbf{1}_{\{ X_{n, 1} = j \} } )  \overset{{\text{a.s.}}}{\to} 0.
$$
Noting that the expression on the right is equal to $\sum_{l=1}^n (\text{Cov}[Y_{n,l}])_{i,j}$ with $Y_{n, l}$ is the same as in (\ref{Y}) and using the convergence proved in (\ref{LindCond}) it follows that
$$
\widehat \Sigma_{k, n}  \overset{{\text{a.s.}}}{\to} \Sigma_k,
$$
which implies the cliamed convergence in probability.
 \end{proof}

\subsection*{Proof of Theorem \ref{thm:seperation}}
\begin{proof}[Proof of Theorem \ref{thm:seperation}]

Since $\widehat I_{0, n} \subseteq \widehat{S}_{k, n}$, we must have that 
$$
\widehat t_{\alpha, n}(\widehat I_{0, n}) \ge \widehat t_{\alpha, n}(\widehat S_{k, n}).
$$ 
Also, define $\widehat t_{\alpha, n}: = \widehat t_{\alpha, n}(\mathcal S_{k})$. Note $\widehat t_{\alpha, n} = \widehat t_{\alpha, n}(\widehat S_{k, n})$ when $\widehat S_{k, n} = \mathcal S_k$.  We can write
\begin{align*}
P (\widehat T_{n, k} < \widehat t_{\alpha, n}(\widehat I_{0, n}) ) & \ge   P (\widehat T_{n, k} < \widehat t_{\alpha, n}(\widehat S_{k, n}))  \\
& \ge   P (\widehat T_{n, k} < \widehat t_{\alpha, n}(\widehat S_{k, n}), \widehat{\mathcal S}_{k, n}  = \mathcal{S}_k )  \\
& =   P (\widetilde T_{n, k} < \widehat t_{\alpha, n}, \widehat{\mathcal S}_{k, n}  = \mathcal{S}_k )  \\
& =   P(\widetilde T_{n, k} < \widehat t_{\alpha, n}) -  P(\widetilde T_{n, k} < \widehat t_{\alpha, n}, \widehat{\mathcal S}_{k, n}  \ne \mathcal{S}_k  )   \\
& \ge   P(\widetilde T_{n, k} < \widehat t_{\alpha, n} ) -  P(\widehat{\mathcal S}_{k, n}  \ne \mathcal{S}_k  )  \\
& =:  A - B
\end{align*}
where $\widetilde{T}_{n, k}  = \sqrt n \min_{j \in \mathcal S_k} \nabla^k \widehat p_n(j),$  and
\begin{align*}
A & =    P(\widetilde T_{n, k} < \widehat t_{\alpha, n} )   =  P(\sqrt n (\min_{j \in \mathcal S_k} \nabla^k \widehat p_n(j) - \rho_k) < - \sqrt n \rho_k  + \widehat t_{\alpha, n}  ).
\end{align*}
Now, for any $j \in  I_{\rho_k}$, we have that 
\begin{align*}
  P(\sqrt n (\nabla^k \widehat p_n(j) - \rho_k) < - \sqrt n \rho_k  + \widehat t_{\alpha, n}  )  \le  P(\sqrt n (\min_{j \in \mathcal S_k} \nabla^k \widehat p_n(j) - \rho_k) < - \sqrt n \rho_k  + \widehat t_{\alpha, n}  )
\end{align*}
and, therefore,  
\begin{align*}
  \max_{j \in I_{\rho_k}} P(\sqrt n (\nabla^k \widehat p_n(j) - \rho_k) < - \sqrt n \rho_k  + \widehat t_{\alpha, n} )  \le  P(\sqrt n (\min_{j \in \mathcal S_k} \nabla^k \widehat p_n(j) - \rho_k) < - \sqrt n \rho_k  + \widehat t_{\alpha, n} ).
 \end{align*}
Letting
\begin{align*}
   \widetilde m_j =   \E [ \vert \nabla^k \mathbf{1}_{\{X_1=j\}}  - \nabla^k p(j)  \vert^3  ],  \quad  
 \sigma^2_j   =  \E [ (\nabla^k \mathbf{1}_{\{X_1=j\}} - \nabla^k p(j))^2 ],
\end{align*}
since for all $j \in \rho_k$ we have $\nabla^kp(j)=\rho_k$, it follows
\begin{align}
\label{eq:BoundA}
A & \ge  \max_{j \in I_{\rho_k}}  P\Big( \frac{\sqrt n (\nabla^k \widehat p_n(j) - \rho_k)}{\sigma_j }  < \frac{\sqrt n \vert \rho_k\vert  + \widehat t_{\alpha, n}}{\sigma_j} \Big)\notag  \\
& \ge   \max_{j \in I_{\rho_k}} \Big \{ -C   \frac{\widetilde m_j}{\sigma_j^3} \frac{1}{\sqrt n}  + 
  \Phi\Big( \frac{\sqrt n \vert \rho_k\vert  + \widehat t_{\alpha, n}}{\sigma_j}   \Big) \Big \}\notag \\
& \ge   \min_{j \in I_{\rho_k}} \Big( - C  \frac{\widetilde m_j}{\sigma_j^3} \Big) \frac{1}{\sqrt n}  +  \max_{j \in I_{\rho_k}}  \Phi\Big( \frac{\sqrt n \vert \rho_k\vert  + \widehat t_{\alpha, n}}{\sigma_j}   \Big)\notag \\
& \ge   -\frac{1}{2} \max_{j \in I_{\rho_k}} \frac{\widetilde m_j}{\sigma_j^3} \frac{1}{\sqrt n}  + \Phi\Big( \frac{\sqrt n \rho_k + \widehat t_{\alpha, n}}{\min_{j \in I_{\rho_k}} \sigma_j} \Big)
\end{align}
using the fact $C \approx 0.4785$ (see \citealp{tyurin2012refinement}) and assuming that $n$ is large enough such that $\sqrt n \vert \rho _k \vert + \widehat t_{\alpha, n} \ge 0$  (this condition will be later automatically satisfied).  Now, we will bound B from above.  As shown in the proof of Theorem \ref{thm:selection}, we have that 
\begin{align*}
B = P( \mathcal S_k  \cap \widehat{\mathcal{S}}_{k,n}^c\neq \emptyset)  \le  \frac{C}{\sqrt n } \sum_{j \in  \mathcal{S}_k} \frac{m_j}{(p(j)(1-p(j)))^{3/2}}  +   \sum_{j \in  \mathcal{S}_k} \Phi\Big(- \frac{\sqrt n p(j)}{2 \sqrt{p(j) (1-p(j))}}\Big), 
\end{align*}
with $m_j =  \E[\vert \mathbf{1}_{\{X_1 = j\}} - p(j) \vert^3]$ and $C \le 1/2$ is the same constant as before. Moreover, for $j \in \mathcal S_k$, it holds 
\begin{align*}
  \frac{m_j}{(p(j)(1-p(j)))^{3/2}} &  =     \frac{p(j) (1-p(j))^3 + (1-p(j)) p(j)^3}{(p(j)(1-p(j)))^{3/2}}  =   \frac{(1-p(j))^2 + p(j)^2}{\sqrt{p(j)(1-p(j))}} \\
 &\le    \frac{1}{\sqrt{p(j)(1-p(j))}} \le \frac{\sqrt 2}{\sqrt{\xi}}
\end{align*}
using $(1-p(j))^2 + p(j)^2 \le 1-p(j) + p(j) =1$, the definition of $\xi$ and the fact that if $p(j) \le 1/2$, then $1-p(j) \ge 1/2$ and $p(j) (1-p(j)) \ge \xi/2$, and if $p(j) > 1/2$, then $1-p(j) \ge \xi$ and $p(j) (1-p(j)) > \xi/2$.   Moreover,
\begin{align*}
 \Phi\Big(- \frac{\sqrt n p(j)}{2 \sqrt{p(j) (1-p(j))}}\Big)  &= 1 - \Phi\Big(\frac{\sqrt n p(j)}{2 \sqrt{p(j) (1-p(j))}}\Big)  \\
 & =   1 - \Phi\Big( \frac{\sqrt n}{2}\sqrt{\frac{p(j)}{1-p(j)}} \Big)  \\
 & \le   1- \Phi\left(\frac{\sqrt n \sqrt \xi}{2}   \right) \\
 & \le    \frac{2\varphi(\sqrt n \sqrt{\xi}/2)}{\sqrt n \sqrt{\xi}}\\
 &\le  \frac{16 e^{-1}}{\sqrt{2\pi}}  \frac{1}{\xi^{3/2} n^{3/2}}    
 \end{align*}
where we used the inequalities $1- \Phi(x) \le x \varphi(x)$ for all $x\in \mathbb{R}$ and $x \exp(-x) \le e^{-1}$ for all $x > 0$.  We conclude that for any fixed $n \ge 1$,
\begin{align}\label{eq:BoundB}
B  \le  \frac{\vert \mathcal S_k \vert \sqrt 2}{2 \sqrt{\xi} \sqrt n}  + \frac{16 e^{-1} \vert \mathcal S_k \vert }{\sqrt{2\pi} \xi^{3/2} n^{3/2}}.
\end{align}
Combining the bounds \eqref{eq:BoundA} and \eqref{eq:BoundB}, we have the power inequality
\begin{align*}
P( \widehat T_{k, n} < \widehat t_{n,\alpha}(\widehat I_{0, n}) ) &\ge A-B \\
&\geq -\frac{1}{2} \max_{j \in I_{\rho_k}} \frac{\widetilde m_j}{\sigma_j^3} \frac{1}{\sqrt n}  + \Phi\Big( \frac{\sqrt n \rho_k + \widehat t_{\alpha, n}}{\min_{j \in I_{\rho_k}} \sigma_j} \Big)- \frac{\vert \mathcal S_k \vert \sqrt 2}{2 \sqrt{\xi} \sqrt n}  - \frac{16 e^{-1} \vert \mathcal S_k \vert }{\sqrt{2\pi} \xi^{3/2} n^{3/2}}\\
&\geq 1-\beta.
\end{align*}
where the last inequality is satisfied  under the conditions
\begin{align}\label{eq:MainCond}
\frac{1}{2} \max_{j \in I_{\rho_k}} \frac{\widetilde m_j}{\sigma_j^3} \frac{1}{\sqrt n}  + \frac{\vert \mathcal S_k \vert \sqrt 2}{2 \sqrt{\xi} \sqrt n}  + \frac{16 e^{-1} \vert \mathcal S_k \vert }{\sqrt{2\pi} \xi^{3/2} n^{3/2}} \le \beta/2, \ \quad \ \sqrt n \vert \rho_k\vert  \ge z_{1-\beta/2} \min_{j \in I_{\rho_k}}\sigma_j  - \widehat t_{\alpha, n},
\end{align}
with $z_{1-\beta/2} = \Phi^{-1}(1-\beta/2)$.  To get a more explicit bound, recall that $\xi =  \min_{j \in \mathcal S_k} (p(j) \wedge (1-p(j))$ and let $q$ be a real-valued function defined on the integers. For $j \in \mathbb Z$, the following identity can be shown recursively:
\begin{align*}
\nabla^k q(j)  =  \sum_{l=0}^k (-1)^l \binom{k}{l} q(j + l).
\end{align*}
Then, since for all $j \in I_{\rho_k}$ we have $\nabla^k p(j) =\rho_k$, for such $j$'s it holds
\begin{align*}
\sigma_j^2 & = \sum_{x\in \mathcal{S}} \Big( \sum_{l=0}^k (-1)^l \binom{k}{l} \mathbf{1}_{\{x=j + l\}}-\rho_k\Big)^2  p(x) \\
&=   \sum_{l=0}^k \Big( (-1)^l \binom{k}{l} - \rho_k  \Big)^2  p(j+l)   \\
& \ge   \xi \sum_{l=0}^k \Big( (-1)^l \binom{k}{l} - \rho_k  \Big)^2\\
&=  \xi \Big(\binom{2k}{k} + (k+1) \rho_k^2 \Big)
\end{align*}
where in the last step we have used the fact that  $\sum_{l=0}^k (-1)^l \binom{k}{l} =0$ and $\sum_{l=0}^k \binom{k}{l}^2 = \binom{2k}{k}$. This implies that 
\begin{align}\label{eq:minsigma}
\min_{j \in I_{\rho_k}} \sigma_j \ge \sqrt{\xi} \Big(\binom{2k}{k} + (k+1)\rho_k^2 \Big)^{1/2}.
\end{align}
Now, we derive an upper bound for 
$$
\max_{j \in I_{\rho_k}} \frac{\widetilde m_j}{\sigma_j^3}.
$$
For $j \in I_{\rho_k}$ we have that
$$
\widetilde m_j  = \E [\vert \nabla^k \mathbf{1}_{\{X_1 = j\}} -  \rho_k \vert^3 ]  = \E[\vert \nabla^k \mathbf{1}_{\{X_1 = j\}} -  \rho_k \vert \vert \nabla^k \mathbf{1}_{\{X_1 = j\}} -  \rho_k \vert^2] 
$$
where
\begin{align*}
 \vert \nabla^k \mathbf{1}_{\{X_1 = j\}} -  \nabla^k p(j)   \vert & =   \Big \vert \sum_{l=0}^k (-1)^l \binom{k}{l} \mathbf{1}_{\{X_1 = j + l\}} + \vert \rho_k \vert \Big \vert \\
& \le    \max_{ 0\le l \le k } \Big \vert (-1)^l \binom{k}{l} + \vert \rho_k \vert \Big \vert\\
& \le    \max_{ 0\le l \le k } \binom{k}{l} + \vert \rho_k \vert \\
&=  \binom{k}{\lfloor \frac{k +1}{2} \rfloor} + \vert \rho_k \vert
\end{align*}
where $\lfloor \frac{k +1}{2} \rfloor $ is the integer part of $(k+1)/2$, i.e, it is equal to $k/2$ if $k$ is even and $(k+1)/2$ otherwise. Hence, for all $j \in I_{\rho_k}$, we have that
\begin{align*}
\frac{\widetilde{m}_j}{\sigma_j^3} & \le   \Big(\binom{k}{\lfloor \frac{k +1}{2} \rfloor} + \vert \rho_k \vert  \Big)  \frac{\E[\vert \nabla^k \mathbf{1}_{\{X_1 = j\}} -  \rho_k \vert^2]}{\sigma_j^3} \\
& =   \Big(\binom{k}{\lfloor \frac{k +1}{2} \rfloor} + \vert \rho_k \vert\Big)  \frac{1}{\sigma_j}. 
\end{align*}
Combining with the inequality in \eqref{eq:minsigma}, we conclude that
$$
\max_{j \in I_{\rho_k}}\frac{\widetilde{m}_j}{\sigma^3_j} \le  \frac{1}{\sqrt{\xi}} \frac{\binom{k}{\lfloor \frac{k +1}{2} \rfloor} + \vert \rho_k \vert}{\Big(\binom{2k}{k} + (k+1) \rho_k^2\Big)^{1/2}}.
$$
Then, the conditions in \eqref{eq:MainCond} are satisfied if $n$ and $\vert \rho_k \vert$ are such that
\begin{align}\label{eq:MainCond2}
& \frac{1}{2\sqrt{\xi}}  \frac{\binom{k}{\lfloor \frac{k +1}{2} \rfloor} + \vert \rho_k \vert}{\left(\binom{2k}{k} + (k+1) \rho_k^2\right)^{1/2}} \frac{1}{\sqrt n} + \frac{\vert \mathcal S_k \vert \sqrt 2}{2 \sqrt{\xi} \sqrt n}  + \frac{16 e^{-1} \vert \mathcal S_k \vert }{\sqrt{2\pi} \xi^{3/2} n^{3/2}} \le \beta/2,  \\
& \sqrt n \vert \rho_k \vert \ge \sqrt \xi \Big(\binom{2k}{k} + (k+1) \rho_k^2\Big)^{1/2}  z_{1-\beta/2}  - t_\alpha. \label{eq:MainCond2.2}
\end{align}
To simplify the first inequality in \eqref{eq:MainCond2}, we can use the fact that $16 e^{-1}/\sqrt{2\pi} \le 5/2$ and $\vert \mathcal S_k \vert \le  \vert \mathcal S_k \vert^3$. Also, we show and then use the fact that
\begin{align*}
\frac{\binom{k}{\lfloor \frac{k +1}{2} \rfloor} + \vert \rho_k \vert}{\Big(\binom{2k}{k} + (k+1) \rho_k^2\Big)^{1/2}} \le \sqrt 2
\end{align*}
which is equivalent to
\begin{align*}
\Big(\binom{k}{\lfloor \frac{k +1}{2} \rfloor} + \vert \rho_k \vert\Big)^2 \le 2\binom{2k}{k} + 2(k+1) \rho_k^2.
\end{align*}
Since $(a+ b)^2 \le 2 (a^2 + b^2)$, it is enough to show that
\begin{align*}
 \binom{k}{\lfloor \frac{k +1}{2} \rfloor}^2 +   \vert \rho_k \vert^2 \le  \binom{2k}{k} + (k+1) \rho_k^2.
\end{align*}
For all $k  \ge 1$, we have that $\vert \rho_k \vert^2 \le  (k +1) \vert \rho_k \vert^2 $. Thus, it is enough to show that 
\begin{align*}
 \binom{k}{\lfloor \frac{k +1}{2} \rfloor}^2 \le  \binom{2k}{k},
\end{align*}
which follows from  
 $ \binom{2k}{k} = \sum_{i=0}^k \binom{k}{i}^2  \ge  \binom{k}{\lfloor \frac{k +1}{2} \rfloor}^2$.  Therefore, \eqref{eq:MainCond2} is satisfied if
 \begin{align*}
 \frac{\sqrt 2}{2 \sqrt{\xi}} \frac{1}{\sqrt n} + \frac{\vert \mathcal S_k \vert \sqrt 2}{2 \sqrt{\xi} \sqrt n}  + \frac{5 \vert \mathcal S_k \vert^3 }{2\xi^{3/2} n^{3/2}} \le \beta/2.    
 \end{align*}
Note that we must have
$$\frac{\vert \mathcal S_k \vert}{\sqrt \xi \sqrt n} \le \frac{\beta}{\sqrt 2} < \frac{1}{\sqrt 2}
$$
and hence
$$\frac{\vert \mathcal S_k \vert^3}{ \xi^{3/2} n^{3/2}} = \Big(\frac{\vert \mathcal S_k \vert}{\sqrt \xi \sqrt n} \Big)^2 \frac{\vert \mathcal S_k \vert}{\sqrt \xi \sqrt n}  \leq  \frac{1}{2} \frac{\vert \mathcal S_k \vert}{\sqrt \xi \sqrt n}.
$$
Therefore, \eqref{eq:MainCond2} is satisfied if 
$$
\Big(\frac{1}{\sqrt 2}  +  \left(\frac{1}{\sqrt 2} +  \frac{5}{4}  \right) \vert \mathcal S_k \vert \Big)   \frac{1}{\sqrt \xi \sqrt n} \le \beta/2.
$$
which is equivalent to 
$$
(4 + (4 + 5 \sqrt 2) \vert \mathcal S_k \vert ) \frac{1}{\sqrt \xi \sqrt n} \le 2 \sqrt 2 \beta,
$$
that is, 
\begin{align*}
n \ge \frac{(4 + (4 + 5 \sqrt 2) \vert \mathcal S_k \vert )^2}{8 \beta^2 \xi}. 
\end{align*}
For the condition \eqref{eq:MainCond2.2}, we will find now a lower bound for $\widehat{t}_{\alpha, n}$.  Note that  $\widehat{t}_{\alpha, n} \ge \underline{t}_{k, \alpha}$ if and only if
\begin{align*}
P(\min_{j \in \mathcal S_k}  \widehat{Z}_j \le \underline{t}_{k, \alpha}) \le \alpha
\end{align*}
where $(\widehat{Z}_j)_{j \in \mathcal S_k} \sim \mathcal{N}(0, \widehat{\Sigma}_{k, n})$.  Let $Z \sim \mathcal N(0,1)$. In the following, we will denote the diagonal terms of $\widehat{\Sigma}_{k, n}$ by $\widehat \sigma_{j}^2, j =1, \ldots, \vert \mathcal S_k \vert$. For $t < 0$, we have that 
\begin{align*}
P(\min_{j \in \mathcal S_k}  \widehat{Z}_j \le t) & \le    \sum_{j \in \mathcal S_k} P(Z_j \le t)  \\
&=  \sum_{j \in \mathcal S_k} P(Z_j/\widehat{\sigma}_j \le t/\widehat{\sigma}_j)  \\
& =  \sum_{j \in \mathcal S_k} P(Z \le t/\widehat{\sigma}_j)  \\
& \le   \vert \mathcal S_k \vert \  \Phi\Big(\frac{t}{\max_{j \in \mathcal S_k} \widehat{\sigma}_j}  \Big)  \le \alpha 
\end{align*}
provided $t \le z_{\frac{\alpha}{\vert \mathcal S_k \vert}}  \max_{j \in \mathcal S_k} \widehat{\sigma}_j $.  This means that we can take $ \underline{t}_{k, \alpha} =  \bar{\sigma} z_{\frac{\alpha}{\vert \mathcal S_k \vert}}$
with $\bar \sigma \ge \max_{j \in \mathcal S_k} \widehat{\sigma}_j.$ Let us denote by $\E_n$ the expectation with respect to the empirical distribution. Then, for any $j \in \mathcal S_k$, we have that
\begin{align*}
\widehat{\sigma}_j^2 & =    \E_n\Big[\Big(\sum_{l=0}^k  (-1)^l \binom{k}{l} \mathbf{1}_{\{X_1 = j+l\}}\Big)^2\Big] - (\nabla^k \widehat p_n(j) )^2   \\
& \le    \E_n\Big[\Big(\sum_{l=0}^k  (-1)^l \binom{k}{l} \mathbf{1}_{\{X_1 = j+l\}}\Big)^2\Big] \\
& =    \sum_{l, s=0}^k (-1)^{l + s}\binom{k}{l}\binom{k}{s} \E_n[ \mathbf{1}_{\{X_1 = j+l\}} \mathbf{1}_{\{X_1 = j+s\}}]\\
& =  \sum_{l=0}^k \binom{k}{l}^2 \widehat p_n(j+l)  \le \binom{2k}{k}.
\end{align*}
Thus, we can take $\underline{t}_{k, \alpha} = \binom{2k}{k}^{1/2} z_{\frac{\alpha}{\vert \mathcal S_k \vert}}$.  Note that 
$ - \underline{t}_{k, \alpha} =  \binom{2k}{k}^{1/2} z_{1- \frac{\alpha}{\vert \mathcal S_k \vert}}$, using the symmetry of $\mathcal N(0,1).$ 
We conclude that the power is at least $1-\beta$ if $n$ and $\rho_k$ satisfy
\begin{align*}
&n \ge \frac{\left(4 + (4 + 5 \sqrt 2) \vert \mathcal S_k \vert \right)^2}{8 \beta^2 \xi} \\
&  \sqrt n \vert \rho_k \vert \ge \sqrt \xi \Big(\binom{2k}{k} + \rho_k^2 (k+1)\Big)^{1/2}  z_{1-\beta/2}  +  \binom{2k}{k}^{1/2} z_{1- \frac{\alpha}{\vert \mathcal S_k \vert}}.
\end{align*}
If $\vert \rho_k \vert = - \rho_k \le r \in (0, \infty)$, then to achieve a power at least $1-\beta$ it is sufficient that 
\begin{align*}
&n \ge\frac{(4 + (4 + 5 \sqrt 2) \vert \mathcal S_k \vert )^2}{8 \beta^2 \xi}, \\
&\sqrt n \vert \rho_k \vert \ge  \sqrt \xi \Big(\binom{2k}{k} + r^2 (k+1)\Big)^{1/2}  z_{1-\beta/2}  +  \binom{2k}{k}^{1/2} z_{1- \frac{\alpha}{\vert \mathcal S_k \vert}},
\end{align*}
which concludes the proof.
\end{proof}

\subsection*{Proof of Corollary \ref{cor:rejection-region-conv-proj}}

\begin{proof}[Proof of Corollary \ref{cor:rejection-region-conv-proj}]
\noindent 1. As $n \to \infty$, Theorem \ref{thm:selection} establishes that the set $\widehat J_{0, n} = \widehat{\mathcal{S}}_{1, n} \setminus \widehat I_{0, n}$ converges to $J_0$, the set of the true knots of $p$ with probability $1-O(1/\sqrt{n})$. Now, we use Lemma 6.1 from \cite{jankowski2009} which ensures that the $\operatorname{gren}$ operator is continuous. Hence, by Lemma \ref{lem:convquant} we have that
\begin{align*}
P(\widehat t^{\mathcal{M}}_{1-\alpha}(\widehat J_{0,n}) \ne t^{\mathcal{M}}_{1-\alpha}(J_0)) = o( 1)  
\end{align*}
where $t^{\mathcal{M}}_{1-\alpha}(J_0)$ is  the $(1-\alpha)$-quantile of the limit r.v.\ $T^\mathcal{M}(J_0)$. Hence, it follows that
\begin{align*}
P(X_{1:n} \in C_{n}^\mathcal{M}(\alpha))  = P(\widehat T^{\mathcal M}_n > t^{\mathcal{M}}_{1-\alpha}(J_0) )   + o(1)  \to  \alpha
\end{align*}
as $n \to \infty$. 

\medskip
\noindent 2. In this case,  Theorem \ref{thm:selection} implies that 
\begin{align*}
P( \widehat{\mathcal{S}}_{1, n} \setminus \widehat I_{0, n}  \ne  \mathcal S_1 )  = O(1/\sqrt n).
\end{align*}
Hence,  $T^{\mathcal M}(J_0)  = 0$ with probability 1 and 
$$
P(\widehat t_{1-\alpha}^\mathcal{M}(\widehat J_{0,n}) \ne 0 )  =  o(1).
$$
Thus,
\begin{align*}
P(X_{1:n} \in C_{n}^\mathcal{M}(\alpha)) & =    P( \sqrt n \Vert \widehat p^{\mathcal M}_n - \widehat p_n \Vert_2 > 0 )  + o(1) \\ 
& =    P( \Vert \widehat p^{\mathcal M}_n - \widehat p_n \Vert_2 > 0 )  + o(1)  \\
& \le  P( \exists \ j \in \mathcal S_1:  \nabla^1 \widehat p_n(j) < 0  )  + o(1) \\
& \le   \sum_{j \in \mathcal S_1} P(\nabla^1 \widehat p_n(j) < 0  )  + o(1) \\
& \le   \sum_{j \in \mathcal S_1} P(\sqrt n (\nabla^1 \widehat p_n(j) - \nabla^1 p(j)) < - \sqrt n  \nabla^1 p(j) ) + o(1) \\
& =  o(1)
\end{align*}
using the same arguments as in the proof of Theorem \ref{thm:selection}. 

\medskip

\noindent 3. Owing to point 1 of Lemma \ref{lemma:proj}, let  $ p^{\mathcal M}$ be the unique $\ell_2$-projection of $p$ on the set of non-increasing p.m.f.s $\mathcal M$.  By point 2 of Lemma \ref{lemma:proj} and the Continuous Mapping Theorem, it follows that 
$$
\Vert \widehat p^{\mathcal M}_n  - \widehat p_n \Vert_2 \overset{P}{\to}  \Vert  p^{\mathcal{M}} - p \Vert_2.
$$
In this case, it holds $\Vert  p^{\mathcal{M}} - p \Vert_2  > 0$ implying that  
\begin{align}\label{eq:div}
\widehat T^{\mathcal M}_n  \overset{P}{\to} \infty.
\end{align}
Moreover, by the same arguments given in point 1, we have that
$$
P(\widehat t_{1-\alpha}^\mathcal{M}(\widehat J_{0,n})  \ne t_{1-\alpha}^\mathcal{M}(J^*)) = o(1),
$$ where $t_{1-\alpha}^\mathcal{M}(J^*)$ is the $(1-\alpha)$-quantile of $T^{\mathcal M}(J^*)$ with 
\begin{align*}
J^* =  \mathcal{S}_1 \setminus (I_- \cup I_0),
\end{align*}
and the set $I_-$ is defined in Theorem \ref{thm:selection}.  Using \eqref{eq:div}, we conclude that
\begin{align*}
P(X_{1:n} \in C_{n}^\mathcal{M}(\alpha))  =    P( \widehat T^{\mathcal M}_n > t_{1-\alpha}^\mathcal{M}(J^*) )  + o(1) \to 1
\end{align*}
as $n \to \infty$.
\end{proof}

\section{Conclusions}
\label{sec:conc}
In this paper, we introduced a unified framework for testing $k$-monotonicity of a p.m.f.\ with unknown finite support. A well-known real-world motivation for this problem is the estimation of species richness.
We proposed an asymptotic test for $k$-monotonicity whose calibration is based on a novel selection method of the (unknown) knot points of the true p.m.f.\ which we proved to be $n^{-1/2}$-consistent under the null hypothesis.  For $k =1$ (resp. $k=2$), we showed how our selection method can be very useful to correctly calibrate the already existing tests based on the Grenander (resp. convex least squares) estimator. Although we found these tests to be quite competitive with the one proposed here, the main approach has the real merit of being much easier to implement numerically and requiring much less computational burden. Moreover, when the true monotonicity parameter is unknown, we developed an estimator for the largest $k\in \mathbb{N}_0$ for which  $j$-monotonicity is not rejected for all $j =0, \ldots, k$. We showed that such an estimator is different from the true value with probability which is asymptotically smaller than the nominal level of the test. Simulations and applications to 5 datasets showcase the performance and the practical usefulness of the proposed methods. The theoretical guarantees are supported by detailed proofs, which we believe have several mathematical aspects that are interesting in their own right.

\section*{Acknowledgements}
We are grateful to Professor Luca Pratelli (Italian Naval Academy, Leghorn) for his insightful discussions and comments, which helped improve the paper.

\appendix
\section*{Appendix}

\section{Additional experiments}

In the following tables we report the result of the experiments presented in the paper replicated for $n=50$. We adopt the same notation and simulation setting.

\begin{table}[ht!]
    \centering
    \caption{Percentage of rejections under $H_0^{(1)}$ for monotonicity tests ($n=50$).}
    \medskip
    \begin{tabular}{lrrrr}
        \toprule
        {Model $\backslash$ Test} & (i) & (ii) & (iii) & (iv) \\
        \midrule
        $\mathcal{P}(0,4,1)$                             & 2.9 & 3.7 & 4.2 & 3.7 \\
        $\mathcal{P}(0,4,2-\sqrt{2})$                    & 0.0 & 0.0 & 0.1 & 0.0 \\
        $\mathcal{P}(0,9,1)$                             & 3.1 & 3.9 & 4.3 & 3.8 \\
        $\mathcal{P}(0,9,2-\sqrt{2})$                    & 0.0 & 0.0 & 0.1 & 0.0 \\
        $\mathcal{MT}(\frac{1}{5},\dots,\frac{1}{5})$    & 0.0 & 0.1 & 0.4 & 0.3 \\
        $\mathcal{MT}(\frac{1}{10},\dots,\frac{1}{10})$  & 0.1 & 0.2 & 0.5 & 0.2 \\
        \bottomrule
    \end{tabular}
    \label{tab:monotonicity0_n50}
\end{table}

\begin{table}[ht!]
    \centering
    \caption{Percentage of rejections under $H_1^{(1)}$ for monotonicity tests ($n=50$).}
    \medskip
    \begin{tabular}{lrrrr}
        \toprule
        {Model $\backslash$ Test} & (i) & (ii) & (iii) & (iv) \\
        \midrule
        $\mathcal{P}(0,4,2)$                     & 24.2 & 24.5 & 25.3 & 30.7 \\
        $\mathcal{P}(0,9,2)$                     & 21.8 & 22.1 & 23.2 & 26.6 \\
        $\mathcal{B}(0,4,4,0.5)$                 & 49.1 & 50.0 & 52.2 & 90.8 \\
        $\mathcal{B}(0,9,4,0.5)$                 & 49.1 & 50.0 & 52.2 & 90.8 \\
        \bottomrule
    \end{tabular}
    \label{tab:monotonicity1_n50}
\end{table}

\begin{table}[ht!]
    \centering
    \caption{Percentage of rejections under $H_0^{(2)}$ for convexity tests ($n=50$).}
    \medskip
    \begin{tabular}{lrrrr}
        \toprule
        {Model $\backslash$ Test} & (i) & (ii) & (iii) & (iv) \\
        \midrule
        $\mathcal{P}(0,4,2-\sqrt{2})$                    & 4.3 & 4.2 & 4.2 & 5.1 \\
        $\mathcal{P}(0,9,2-\sqrt{2})$                    & 4.2 & 4.2 & 4.2 & 5.1 \\
        $\mathcal{MT}(\frac{1}{5},\dots,\frac{1}{5})$    & 0.5 & 1.0 & 1.1 & 2.2 \\
        $\mathcal{MT}(\frac{1}{10},\dots,\frac{1}{10})$  & 2.1 & 2.3 & 2.6 & 2.4 \\
        \bottomrule
    \end{tabular}
    \label{tab:convexity0_n50}
\end{table}

\begin{table}[ht!]
    \centering
    \caption{Percentage of rejections under $H_1^{(2)}$ for convexity tests ($n=50$).}
    \medskip
    \begin{tabular}{lrrrr}
        \toprule
        {Model $\backslash$ Test} & (i) & (ii) & (iii) & (iv) \\
        \midrule
        $\mathcal{P}(0,4,2)$                        & 19.4 & 19.2 & 19.3 & 90.4 \\
        $\mathcal{P}(0,4,1)$                        & 18.4 & 18.6 & 18.7 & 22.7 \\
        $\mathcal{P}(0,9,2)$                        & 19.0 & 19.1 & 19.3 & 71.8 \\
        $\mathcal{P}(0,9,1)$                        & 18.5 & 18.6 & 18.7 & 22.2 \\
        $\mathcal{B}(0,4,4,0.5)$                    & 30.2 & 30.1 & 30.1 & 100.0 \\
        $\mathcal{B}(0,9,4,0.5)$                    & 30.2 & 30.1 & 30.1 & 100.0 \\
        \bottomrule
    \end{tabular}
    \label{tab:convexity1_n50}
\end{table}

\section{Asymptotic covariance}
In the following propositions, we report some basic asymptotic results with details on the asymptotic covariance $\Sigma_k$ appearing in Theorem \ref{thm:main} for the cases $k=1,2$.
\begin{proposition}
\label{prop:clt_monotonicity}
As $n\to \infty$, 
$$\sqrt n(\nabla^1\widehat p _n (j)-\nabla^1p(j))_{j \in \mathcal{S}_1}\overset{d}{\to}Z\sim \mathcal{N}(0,\Sigma_1),$$  where the asymptotic covariance matrix $\Sigma_1$ is such that 
\begin{align*}
    (\Sigma_1)_{r,s}&=-\nabla^1p(m-1+r)\nabla^1p(m-1+s), \quad |r-s|\geq2,\\
  (\Sigma_1)_{r,r+1}&=-p(m+r)-\nabla^1p(m-1+r) \nabla^1p(m+r),\\
    (\Sigma_1)_{r,r}&=p(m+r)+p(m-1+r)-(\nabla^1p(m-1+r))^2
\end{align*}
for $r,s=1,\ldots,M-m$.
\end{proposition}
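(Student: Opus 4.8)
The plan is to deduce the statement from the multivariate central limit theorem for i.i.d.\ vectors applied to the empirical p.m.f., and then transport the limit through the linear operator $\nabla^1$. First I would introduce the i.i.d.\ bounded (hence square-integrable) random vectors $Y_i = (\mathbf{1}_{\{X_i = m\}}, \ldots, \mathbf{1}_{\{X_i = M\}})^\top$, with mean $(p(m), \ldots, p(M))^\top$ and covariance matrix $V$ given by the usual multinomial entries $V_{u,v} = p(u)\,\mathbf{1}_{\{u = v\}} - p(u)p(v)$. Since $\widehat{p}_n(j) = n^{-1}\sum_{i=1}^n \mathbf{1}_{\{X_i = j\}}$ is the coordinate of $\bar{Y}_n$ indexed by $j$, the classical CLT gives $\sqrt{n}\,(\widehat{p}_n(j) - p(j))_{j=m}^{M} \overset{d}{\to} \mathcal{N}(0, V)$. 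The map sending a function $f$ on $\{m,\ldots,M\}$ to the vector $(\nabla^1 f(j))_{j=m}^{M-1}$, with $\nabla^1 f(j) = f(j) - f(j+1)$, is linear and thus represented by some matrix $A$; applying the continuous mapping theorem yields
\[
\sqrt{n}\,\big(\nabla^1 \widehat{p}_n(j) - \nabla^1 p(j)\big)_{j=m}^{M-1} = A\,\sqrt{n}\,\big(\widehat{p}_n(j) - p(j)\big)_{j=m}^{M} \overset{d}{\to} \mathcal{N}(0, \Sigma), \qquad \Sigma = A V A^\top .
\]
Equivalently, $\Sigma_{r,s} = \cov\big(\nabla^1 \mathbf{1}_{\{X_1 = \cdot\}}(m-1+r),\, \nabla^1 \mathbf{1}_{\{X_1 = \cdot\}}(m-1+s)\big)$, which is precisely the $k=1$ specialization of the covariance in Theorem \ref{thm:main}; the remaining work is purely to make these entries explicit.

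Next I would expand, with $a = m-1+r$ and $b = m-1+s$, the random variable $\nabla^1 \mathbf{1}_{\{X_1 = \cdot\}}(a) = \mathbf{1}_{\{X_1 = a\}} - \mathbf{1}_{\{X_1 = a+1\}}$ and use bilinearity of the covariance together with $\cov(\mathbf{1}_{\{X_1 = u\}}, \mathbf{1}_{\{X_1 = v\}}) = p(u)\mathbf{1}_{\{u = v\}} - p(u)p(v)$. This splits into three cases. If $|r - s| \geq 2$, the indices $a, a+1, b, b+1$ are pairwise distinct, every Kronecker term drops out, and the four surviving products reassemble into $-(p(a) - p(a+1))(p(b) - p(b+1)) = -\nabla^1 p(a)\,\nabla^1 p(b)$. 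If $s = r+1$ (so $b = a+1$), the only diagonal contribution is $\var(\mathbf{1}_{\{X_1 = a+1\}}) = p(a+1) - p(a+1)^2$, and regrouping the remaining terms gives $-p(a+1) - \nabla^1 p(a)\,\nabla^1 p(a+1)$; substituting $a+1 = m+r$ produces the stated off-diagonal formula (the case $r = s+1$ being symmetric). Finally, if $r = s$, then $\Sigma_{r,r} = \var(\mathbf{1}_{\{X_1 = a\}} - \mathbf{1}_{\{X_1 = a+1\}}) = p(a) + p(a+1) + 2p(a)p(a+1) - p(a)^2 - p(a+1)^2 = p(a) + p(a+1) - (\nabla^1 p(a))^2$.

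The only delicate point — and the one I expect to absorb most of the effort, though it is purely computational — is carrying the index bookkeeping cleanly through the three cases and checking that no $p(\cdot)$ is evaluated outside $\{m,\ldots,M\}$; since both $j$ and $j+1$ stay in the support for $j \in \{m,\ldots,M-1\}$, no boundary correction arises, and the algebra reproduces exactly the three displayed expressions. A minor variant would sidestep Theorem \ref{thm:main} entirely and establish the weak convergence directly through the Cramér--Wold device, computing $\var\big(\sum_{j} \lambda_j \nabla^1 \widehat{p}_n(j)\big)$ for arbitrary $(\lambda_j)$ and invoking the univariate CLT; this is the same computation organized differently, and it makes transparent why $\Sigma$ may be rank-deficient without affecting the conclusion.
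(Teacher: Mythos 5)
Your proposal is correct and follows essentially the same route as the paper: apply the multivariate CLT to the vector of indicators defining $\widehat p_n$, push the limit through the linear operator $\nabla^1$, and identify $\Sigma_{r,s}$ as $\cov\bigl(\nabla^1\mathbf{1}_{\{X_1=m+r-1\}},\,\nabla^1\mathbf{1}_{\{X_1=m+s-1\}}\bigr)$, expanded case by case exactly as in the paper's proof. If anything, your explicit $\Sigma=AVA^\top$ formulation is a slightly cleaner justification of the joint weak convergence than the paper's brief remark about linear combinations of weakly convergent sequences.
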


\begin{proof}[Proof of Proposition \ref{prop:clt_monotonicity}]
Apply the Central Limit Theorem and note that for every $r,s=1,\dots,M-m$ it holds
\begin{multline*}
         (\Sigma_1)_{r,s}=\E[(\mathbf{1}_{\{X_1=m-1+r\}}-\mathbf{1}_{\{X_1=m+r\}})(\mathbf{1}_{\{X_1=m-1+s\}}-\mathbf{1}_{\{X_1=m+s\}})]\\
      - \nabla^1p(m-1+r)\nabla^1p(m-1+s).
\end{multline*}
Therefore, we have that $(\Sigma_1)_{r,s}=-\nabla^1p(m-1+r)\nabla^1p(m-1+s)$ when $|r-s|\geq2$ since all the cross products are equal to 0,
$(\Sigma_1)_{r,r+1}=-p(m+r)-\nabla^1p(m-1+r) \nabla^1p(m+r)$ since the products of indicator functions are non-zero only for $X_1=m+r$, and similarly
$(\Sigma_1)_{r,r}=p(m+r)+p(m-1+r)-(\nabla^1p(m-1+r))^2$.
\end{proof}

\begin{proposition}
\label{prop:clt_convexity}
As $n \to \infty$,
$$\sqrt n(\nabla^2\widehat p _n (j)-\nabla^2p(j))_{j \in \mathcal{S}_2}\overset{d}{\to}Z \sim \mathcal{N}(0,\Sigma_2),$$ where the asymptotic covariance matrix $\Sigma_2$ is such that 
\begin{align*}
    (\Sigma_2)_{r,s}&=  -\nabla^2p(m-1+ r)\nabla^2p(m-1+s), \quad |r-s|>2,\\
    (\Sigma_2)_{r, r+2}&=  p(m+ r+1)-\nabla^2p(m-1+ r)\nabla^2p(m+r+1),\\
 (\Sigma_2)_{r,r+1}&=-2(p(m+r+1)+p(m + r))-\nabla^2p(m-1+r)\nabla^2p(m +r),\\
    (\Sigma_2)_{r,r}&=p(m + r+1)+4p(m+r)+p(m-1+r)-(\nabla^2p(m-1+r))^2,
\end{align*}
for $r,s=1,\ldots,M-m-1$.
\end{proposition}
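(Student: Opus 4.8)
The plan is to derive the Gaussian limit from a multivariate Central Limit Theorem and then to read off $\Sigma$ by an explicit but elementary computation; since this is exactly the $k=2$ instance of Theorem~\ref{thm:main}, the convergence in distribution can be obtained as in the proof of that theorem and the only genuinely new content is the evaluation of the covariance entries. Concretely, I would write $\widehat p_n = n^{-1}\sum_{i=1}^n \mathbf{e}(X_i)$ with $\mathbf{e}(X_i) = (\mathbf{1}_{\{X_i = j\}})_{j}$, so that $\sqrt n(\widehat p_n - p)$ is a normalized sum of i.i.d.\ bounded random vectors and converges in distribution to a centered Gaussian with the multinomial covariance $\cov(\mathbf{1}_{\{X_1=a\}},\mathbf{1}_{\{X_1=b\}}) = \mathbf{1}_{\{a=b\}}p(a) - p(a)p(b)$. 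Because $\nabla^2$ acts linearly, $\nabla^2 q(j) = q(j) - 2q(j+1) + q(j+2)$, the vector $(\nabla^2\widehat p_n(j) - \nabla^2 p(j))_{j \in \{m,\dots,M-2\}}$ is a fixed linear image of $\widehat p_n - p$, so the Continuous Mapping Theorem yields the asserted asymptotic normality with
\[
\Sigma_{r,s} = \cov\!\big[\nabla^2\mathbf{1}_{\{X_1 = m+r-1\}},\, \nabla^2\mathbf{1}_{\{X_1 = m+s-1\}}\big], \qquad r,s \in \{1,\dots,M-m-1\},
\]
in agreement with Theorem~\ref{thm:main}.

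Next I would compute this covariance directly. Setting $u = m+r-1$, $v = m+s-1$ and expanding each $\nabla^2\mathbf{1}_{\{X_1 = \cdot\}}$ as the length-three stencil $(c_0,c_1,c_2) = (1,-2,1)$ over the positions $\{u,u+1,u+2\}$, bilinearity of $\cov$ gives
\[
\Sigma_{r,s} = \sum_{a=0}^2\sum_{b=0}^2 c_a c_b\Big(\mathbf{1}_{\{u+a=v+b\}}\,p(u+a) - p(u+a)p(v+b)\Big).
\]
The product-of-means piece factorizes as $-\big(\sum_a c_a p(u+a)\big)\big(\sum_b c_b p(v+b)\big) = -\nabla^2 p(u)\,\nabla^2 p(v)$ for every pair $(r,s)$, while the coincidence piece $\sum_{a,b\,:\,u+a=v+b} c_a c_b\, p(u+a)$ is nonzero only when the two length-three windows overlap, i.e.\ $|r-s|\le 2$. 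Reading off the overlapping shifts and the corresponding products of stencil coefficients gives $0$ for $|r-s|>2$; $c_2 c_0\,p(m+r+1) = p(m+r+1)$ for $s=r+2$; $c_1 c_0\,p(m+r) + c_2 c_1\,p(m+r+1) = -2\big(p(m+r)+p(m+r+1)\big)$ for $s=r+1$; and $c_0^2 p(m+r-1) + c_1^2 p(m+r) + c_2^2 p(m+r+1) = p(m+r-1)+4p(m+r)+p(m+r+1)$ for $s=r$. Adding the two pieces reproduces precisely the four displayed formulas, the sub-diagonal cases being fixed by the symmetry $\Sigma_{r,s}=\Sigma_{s,r}$.

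I do not expect any real obstacle here: the argument is a CLT followed by linear-algebra bookkeeping (equivalently, it is the $(r,s)$-entry of $A\Gamma A^{\top}$ where $A$ is the second-difference matrix and $\Gamma$ the multinomial covariance). The only points demanding care are the index alignment — reconciling the ``$m+r-1$'' labelling of Theorem~\ref{thm:main} with the ``$m+r$''-centred arguments in the statement, and noting that $|\mathcal S_2| = M-m-1$ fixes the size of $\Sigma$ — and the sign bookkeeping of the stencil products $c_a c_b$; keeping these conventions straight is what makes the four cases close exactly as stated. Proposition~\ref{prop:clt_monotonicity} follows from the identical scheme with the length-two stencil $(1,-1)$ in place of $(1,-2,1)$.
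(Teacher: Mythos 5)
Your proposal is correct and follows essentially the same route as the paper: a multivariate CLT for the linear image of the empirical p.m.f., followed by expanding $\cov[\nabla^2\mathbf{1}_{\{X_1=m+r-1\}},\nabla^2\mathbf{1}_{\{X_1=m+s-1\}}]$ and tracking which indicator cross-products survive. Your stencil bookkeeping with $(1,-2,1)$ reproduces all four cases exactly as in the paper's computation.
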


\begin{proof}[Proof of Proposition \ref{prop:clt_convexity}]
Apply the Central Limit Theorem and note that for every $r,s=1,\dots,M-m-1$ it holds
     \begin{multline*}
(\Sigma_2)_{r,s}=\E[(\mathbf{1}_{\{X_1=m+r+1\}}-2\mathbf{1}_{\{X_1=m+r\}}+ \mathbf{1}_{\{X_1=m-1+r\}})\\
\times(\mathbf{1}_{\{X_1=m+s+1\}}-2\mathbf{1}_{\{X_1=m+s\}}+ \mathbf{1}_{\{X_1=m-1+s\}})] -\nabla^2p(m-1+r)\nabla^2p(m-1+s).
     \end{multline*}
Therefore, we have that $(\Sigma_2)_{r,s}=-\nabla^2p(m-1+ r)\nabla^2p(m-1+s)$ when $|r-s|>2$ since all the cross products are equal to 0,
$(\Sigma_2)_{r,r+2}= p(m+ r+1)-\nabla^2p(m-1+ r)\nabla^2p(m+r+1)$, since the products of indicator functions are non-zero only for $X_1=m+r+1$, and similarly $(\Sigma_2)_{r,r+1}=-2(p(m+r+1)+p(m + r))-\nabla^2p(m-1+r)\nabla^2p(m +r),$ $
(\Sigma_2)_{r,r}=p(m + r+1)+4p(m+r)+p(m-1+r)-(\nabla^2p(m-1+r))^2$.
\end{proof}

\section{Bootstrap}
Here, the aim is to show that Bootstrap resampling does not provide appropriate approximations to the limit distribution of the sample statistics $T_{k,n}$ introduced in Section \ref{sec:asymp} and established in Theorem \ref{thm:main}. With the notation of the paper, let $X^*_{1:n} $ be a bootstrap sample drawn from the empirical distribution of $X_{1:n}$, that is, $X^*_{1:n} $ are independent copies of a r.v.\ with p.m.f.\ $\widehat p_n$.  We denote by $\widehat p^*_n$ the empirical p.m.f.\ based on $X^*_{1:n}$.

\begin{proposition}
\label{prop:boostrap}
Suppose $I_{\rho_k} = \{ j \in \mathcal S_k:  \nabla^k p(j) = \rho_k \}$ is a singleton. Then, as $n\to\infty$, the conditional law given $X_{1:n}$ of
$$
\sqrt n (\min_{j \in \widehat{\mathcal{S}}_{k,n}} \nabla^k \widehat p^*_n(j) - \min_{j \in \widehat{\mathcal{S}}_{k,n}} \nabla^k \widehat p_n(j))
$$
converges weakly, in probability, to the law of $W_{I_{\rho_k}}$. 

Suppose $|I_{\rho_k}|\geq 2$. Then, as $n\to\infty$, the conditional law given $X_{1:n}$ of
$$
\sqrt n (\min_{j \in \widehat{\mathcal{S}}_{k,n}} \nabla^k \widehat p^*_n(j) - \min_{j \in \widehat{\mathcal{S}}_{k,n}} \nabla^k \widehat p_n(j))
$$
is stochastically larger than the law of $W_{I_{\rho_k}}$.
\end{proposition}

\begin{proof}[Proof of Proposition \ref{prop:boostrap}]
In the following, we set $\widehat {\mathcal{S}}_{k,n} = \mathcal{S}_k$ as this event occurs with probability 1 for $n$ large enough.
We have that 
\begin{align}
\label{eq:first_equality}
\begin{split}
\min_{j \in \mathcal{S}_k} \nabla^k \widehat p^*_n(j) &=   \min_{j \in \mathcal{S}_k} \Big(\nabla^k \widehat p^*_n(j) - \nabla^k \widehat p_n(j)  +  \nabla^k \widehat p_n(j) \Big) \\
& =  \min_{j \in I_{\rho_k}} \Big(\nabla^k \widehat p^*_n(j) - \nabla^k \widehat p_n(j)  +  \nabla^k \widehat p_n(j) \Big)  \\
&\quad \wedge \min_{j \in I^c_{\rho_k}} \Big(\nabla^k \widehat p^*_n(j) - \nabla^k \widehat p_n(j)  +  \nabla^k \widehat p_n(j) \Big). 
\end{split}
\end{align}
Now, conditionally on $X_{1:n}$ the Central Limit Theorem implies that
\begin{align*}
\sqrt n(\nabla^k \widehat p^*_n(j) - \nabla^k \widehat p_n(j) )_{j \in \mathcal{S}_k} \overset{d}{\to} (\widehat Z_j)_{j \in \mathcal{S}_k}  \sim \mathcal{N}(0, \widehat{\Sigma}_{k,n}) 
\end{align*}
where $\widehat{\Sigma}_{k,n}$ is the empirical estimator of the true covariance matrix $\Sigma_k$ introduced in Theorem \ref{thm:main}. Now, suppose that $I_{\rho_k}^c \ne \emptyset$. Then, for $j \in I_{\rho_k}$ and $i \in I_{\rho_k}^c$ we have that with probability tending to 1,
\begin{align*}
&\nabla^k \widehat p^*_n(i) - \nabla^k \widehat p^*_n(j) \\
 &=\nabla^k \widehat p^*_n(i) - \nabla^k \widehat p_n(i)  +  \nabla^k \widehat p_n(i)   -  (\nabla^k \widehat p^*_n(j) - \nabla^k \widehat p_n(j)  +  \nabla^k \widehat p_n(j))\\
 & = \nabla^k \widehat p_n(i)  -  \nabla^k \widehat p_n(j)  +  o_P(1) >  0
\end{align*}
as a consequence of the consistency of the empirical estimator $\widehat p_n$.  Thus, thanks to \eqref{eq:first_equality} we have that
\begin{align*}
\min_{j \in \mathcal{S}_k} \nabla^k \widehat p^*_n(j) =  \min_{j \in I_{\rho_k}} \nabla^k \widehat p^*_n(j)
\end{align*}
with probability tending to 1, and the same argument implies that, with probability tending to 1, 
$$
\min_{j \in \mathcal{S}_k} \nabla^k \widehat{p}_n(j) =  \min_{j \in I_{\rho_k}} \nabla^k \widehat{p}_n(j).
$$
Thus, we conclude that
\begin{align*}
 \min_{j \in \mathcal{S}_k}   \nabla^k \widehat p^*_n(j) - \min_{j \in \mathcal{S}_k} \nabla^k \widehat p_n(j) & =    \min_{j \in I_{\rho_k}}   \nabla^k \widehat p^*_n(j) - \min_{j \in I_{\rho_k}} \nabla^k \widehat p_n(j).\\
 & =   \min_{j \in I_{\rho_k}}  \Big(  \nabla^k \widehat p^*_n(j)  -  \nabla^k \widehat p_n(j)  +  \nabla^k \widehat p_n(j) \Big)- \min_{j \in I_{\rho_k}} \nabla^k \widehat p_n(j),
\end{align*}
with probability tending to 1.

Suppose that $I_{\rho_k} = \{i_1\}$ then, as $n\to\infty$,
\begin{align*}
 \sqrt n (\min_{j \in \mathcal{S}_k}   \nabla^k \widehat p^*_n(j) - \min_{j \in \mathcal{S}_k} \nabla^k \widehat p_n(j) )\overset{P}{=}    \sqrt n (\nabla^k \widehat p^*_n(i_1)  -  \nabla^k \widehat p_n(i_1))   \overset{d}{\to} \widehat{Z}_{i_1,n} \sim \mathcal{N}(0, \widehat{\Sigma}_{k,n})  
\end{align*}
conditionally on $X_{1:n}$, where $\widehat{\Sigma}_{k,n} \overset{P}{\to} {\Sigma}_{k} \in \mathbb{R_+} $. Thus, conditionally on $X_{1:n}$, as $n\to \infty$, it holds
$$
\widehat{Z}_{i_1,n} \overset{d}{\to} Z_{i_1} = W_{I_{\rho_k}} \sim \mathcal{N}(0,\Sigma_k),
$$
and the first claim follows.

Suppose now that $|I_{\rho_k}|\geq 2$. Using the fact that $\min _i(a_i + b_i)  \ge \min_i a_i +  \min_i b_i$, with probability tending to 1,
\begin{align*}
&\sqrt n (\min_{j \in \mathcal{S}_k}   \nabla^k \widehat p^*_n(j) - \min_{j \in \mathcal{S}_k} \nabla^k \widehat p_n(j) )\\
&=\sqrt n \Big(\min_{j \in I_{\rho_k}}  \Big(  \nabla^k \widehat p^*_n(j)  -  \nabla^k \widehat p_n(j)  +  \nabla^k \widehat p_n(j) \Big)- \min_{j \in I_{\rho_k}} \nabla^k \widehat p_n(j)  \Big)  \\
& \ge \sqrt n  \min_{j \in I_{\rho_k}}  \Big(  \nabla^k \widehat p^*_n(j)  -  \nabla^k \widehat p_n(j)\Big)  + \sqrt n \min_{j \in I_{\rho_k}} \nabla^k \widehat p_n(j)  -  \sqrt n \min_{j \in I_{\rho_k}} \nabla^k \widehat p_n(j) \\
& =   \min_{j \in I_{\rho_k}} \sqrt n  (  \nabla^k \widehat p^*_n(j)  -  \nabla^k \widehat p_n(j)).
\end{align*}
Thus, as $n\to\infty$,
\begin{align*}
\sqrt n ( \min_{j \in \mathcal{S}_k}   \nabla^k \widehat p^*_n(j) - \min_{j \in \mathcal{S}_k} \nabla^k \widehat p_n(j) ) \overset{P}{\ge}   \min_{j \in I_{\rho_k}} \sqrt n  (  \nabla^k \widehat p^*_n(j)  -  \nabla^k \widehat p_n(j))   \overset{d}{\to} \min_{j \in I_{\rho_k}} \widehat{Z}_{j,n}
\end{align*}
conditionally on $X_{1:n}$, where $(\widehat Z_{j,n})_{j \in \mathcal{S}_k}\sim\mathcal{N}(0, \widehat{\Sigma}_{k,n})$ and $\widehat{\Sigma}_{k,n}\overset{P}{\to}\Sigma_k$.  It follows that, conditionally on $X_{1:n}$, as $n\to\infty$,
$$
\min_{j \in I_{\rho_k}} \widehat{Z}_{j,n} \overset{d}{\to}  W_{\rho_k}=\min_{j \in I_{\rho_k}} Z_j,
$$
and the second claim follows.
\end{proof}

\bibliographystyle{apalike}
\bibliography{bib-kmon}

\end{document}